\documentclass[reqno,11pt,psamsfonts]{amsart}

\usepackage[all]{xy}
\usepackage{amsthm}
\usepackage{amssymb}
\usepackage{amsmath,amscd}
\usepackage{mathrsfs}
\usepackage[dvips]{graphicx, color}
\usepackage{tikz}


\def\cal{\mathcal}

\def\PP{{\mathbb P}}

\def\ZZ{{\mathbb Z}}

\def\11{{1\kern-3.5pt 1}}
\def\mumu{{\mu\kern-4.2pt\mu}}
\def\boxtimes{\setbox0\hbox{$\Box$}\copy0\kern-\wd0\hbox{$\times$}}

%
%

\def\End{\operatorname {E nd}}
\def\Ext{\operatorname {Ext}}

\def\Hom{\operatorname {Hom}}
\def\id{\operatorname {id}}

\def\Ker{\operatorname {ker}}

\def\Spec{\operatorname {Spec}}


\def\Aut{\operatorname{Aut}}

\def\dim{\operatorname{dim}}

\def\End{\operatorname{End}}

\def\Ext{\operatorname{Ext}}

\def\gldim{\operatorname{gldim}}
\def\grmod{\operatorname{grmod}}

\def\Hom{\operatorname{Hom}}

\def\Im{\operatorname{Im}}

\def\Ker{\operatorname{Ker}}

\def\max{\operatorname{max}}

\def\mod{\operatorname{mod}}

\def\Proj{\operatorname{Proj}}

\def\sup{\operatorname{sup}}

\def\tails{\operatorname{tails}}

\def\tors{\operatorname{tors}}

\def\G{\mathop{\underline{\underline{\it \Gamma}}}\nolimits}

\def\l{\leftarrow}

\def\d{\downarrow}

\let\oldtext\text
\def\text#1{\oldtext{\normalshape #1}}

\def\a{\alpha}
\def\b{\beta}
\def\c{\gamma}
\def\d{\delta}
\def\e{\epsilon}

\def\l{\lambda}
\def\s{\sigma}
\def\t{\tau}

\def\G{\Gamma}


%
%

\def\cA{{\cal A}}

\def\cM{{\cal M}}
\def\cN{{\cal N}}
\def\cO{{\cal O}}
\def\cP{{\cal P}}

\def\cV{{\cal V}}

\def\cX{{\cal X}}
\def\cY{{\cal Y}}

\def\<{\langle}
\def\>{\rangle}
\def\CM{\operatorname {CM}}
\def\uCM{\underline {\operatorname
{CM}}}

\def\sC{\mathscr C}
\def\sD{\mathscr D}

\def\Specn{\operatorname{Spec_{nc}}}
\def\Projn{\operatorname{Proj_{nc}}}

\def\Ind{\operatorname{Ind}}

\def\normalshape{\rm}

%

\newtheorem{lemma}{Lemma}[section]
\newtheorem{proposition}[lemma]{Proposition}
\newtheorem{theorem}[lemma]{Theorem}
\newtheorem{corollary}[lemma]{Corollary}

{

}

\theoremstyle{definition}

\newtheorem{example}[lemma]{Example}
\newtheorem{definition}[lemma]{\sl Definition}

\theoremstyle{remark}

\newtheorem{remark}[lemma]{Remark}


\begin{document}

\pagenumbering{arabic}

\title[Noncomm Conics in Calabi-Yau Quantum Projective Planes]{Noncommutative Conics in Calabi-Yau Quantum Projective Planes}

\author[H. Hu \& M. Matsuno \& I. Mori]{Haigang Hu, Masaki Matsuno and Izuru Mori}

\address{Graduate School of Science and Technology, Shizuoka University, Ohya 836, Shizuoka 422-8529, JAPAN}

\email{huhaigang\_phy@163.com}
\email{matsuno.masaki.14@shizuoka.ac.jp }

\address{Department of Mathematics, Faculty of Science, Shizuoka University, Ohya 836, Shizuoka 422-8529, JAPAN}

\email{mori.izuru@shizuoka.ac.jp}

\keywords {Noncommutative conic, Calabi-Yau quantum polynomial algebra, AS-Gorenstein algebra, point variety}

\thanks{{\it 2020 MSC}: 16E65, 16S38, 16W50}

\thanks{The second author was supported by Research Fellowships of the Japan Society for the Promotion of Science for Young Scientists
(No. 21J11303).}

\thanks {The third author was supported by 
Grants-in-Aid for Scientific Research (C) 20K03510 
Japan Society for the Promotion of Science 
and 
Grants-in-Aid for Scientific Research (B) 16H03923 
Japan Society for the Promotion of Science.}


\begin{abstract} 
In noncommutative algebraic geometry, 
noncommutative quadric hypersurfaces
are major objects of study.  In this paper, we focus on studying 
noncommutative conics $\Projn A$ embedded into Calabi-Yau quantum projective planes.  
In particular, we give complete classifications of homogeneous coordinate algebras $A$ of noncommutative conics up to isomorphism of graded algebras, and
of noncommutive conics $\Projn A$ up to isomorphism of noncommutative schemes. \\

\end{abstract}

\maketitle

\section{Introduction} 

Throughout this paper, let $k$ be an algebraically closed field of characteristic $0$.  By Sylvester's theorem, it is elementary to classify (commutative) quadric hypersurfaces in $\PP^{d-1}$, namely,  they are isomorphic to 
$$\Proj k[x_1, \dots, x_d]/(x_1^2+\cdots +x_j^2)\subset \PP^{d-1}$$ 
for some $j=1, \dots, d$.   The ultimate goal of our project is to classify noncommutative quadric hypersurfaces in quantum $\PP^{d-1}$'s defined below.   

Let $A$ be a right noetherian connected graded algebra.  We denote by $\grmod A$ the category of finitely generated graded right $A$-modules.  A morphism in $\grmod A$ is a right $A$-module homomorphism preserving degrees.  For $M, N \in \grmod A$, we write $\Ext^i_A(M, N):=\Ext^{i}_{\grmod A}(M,N)$
for the extension groups in $\grmod A$.

\begin{definition}  A noetherian connected graded algebra $S$ generated in degree $1$ is called a {\it $d$-dimensional quantum polynomial algebra}  if 
\begin{enumerate}
\item{} $\operatorname{gldim} S=d$, 
\item{} $\Ext^i_S(k, S(-j))\cong \begin{cases}  k & \textnormal { if } i=j=d, \\
0 & \textnormal { otherwise, } \end{cases}$ and 
\item{} $H_S(t):=\sum_{i=0}^{\infty}(\dim_kS_i)t^i=1/(1-t)^d$.
\end{enumerate}
\end{definition} 

A $d$-dimensional quantum polynomial algebra $S$ is a noncommutative analogue of the commutative polynomial algebra $k[x_1, \dots, x_d]$, so the {\it noncommutative projective scheme} $\operatorname{Proj_{nc}}S$ associated to $S$ in the sense of \cite{AZ} is regarded as a quantum $\PP^{d-1}$,  
so it is reasonable to define a noncommutative quadric hypersurface in a quantum $\PP^{d-1}$ as follows:

\begin{definition} We say that $A=S/(f)$ is the {\it homogeneous coordinate algebra} of a noncommutative quadric hypersurface if $S$ is a $d$-dimensional quantum polynomial algebra and $f\in S_2$ is a regular normal element.    
\end{definition} 

We often say that $A=S/(f)$ is a {\it noncommutative quadric hypersurface} by abuse of terminology.  It is now reasonable to say that $A=S/(f)$ is a {\it noncommutative conic} if $S$ is a 3-dimensional quantum polynomial algebra.  For every 3-dimensional quantum polynomial algebra $S$, there exists a 3-dimensional Calabi-Yau quantum polynomial algebra $S'$ such that $\grmod S\cong \grmod S'$ by \cite[Theorem 4.4]{IM} so that $\Projn S\cong \Projn S'$, so, in this paper, we will restrict the notion of noncommutative conic as follows:   

\begin{definition}
We say that $A= S/(f)$ is (the homogeneous coordinate algebra of) a 
{\it noncommutative conic (in a Calabi-Yau quantum $\PP^2$)} if $S$ is a $3$-dimensional Calabi-Yau quantum polynomial algebra, and $0\neq f \in Z(S)_2$ is a central element. 
\end{definition} 

Using the above definition, we can describe all the possible homogeneous coordinate algebras of noncommutative conics as follows: 

\begin{theorem} [Corollary \ref{cor.cgc}] Every  homogeneous coordinate algebra $A$ of a noncommutative conic is isomorphic to either 
\begin{enumerate}
\item{} (commutative case) 
$$k[x, y, z]/(x^2), k[x, y, z]/(x^2+y^2), k[x, y, z]/(x^2+y^2+z^2),$$ 
or 
\item{} (noncommutative case)
$$S^{(\alpha, \beta, \gamma)}/(ax^2+by^2+cz^2)$$ 
for some $(a, b, c)\in \PP^2$ where 
$$S^{(\alpha, \beta, \gamma)}:=k\langle x, y, z\rangle/(yz+zy+\alpha x^2, zx+xz+\beta y^2, xy+yx+\gamma z^2)$$
for some $\alpha, \beta, \gamma\in k$ such that $\alpha\beta\gamma=0$ or $\alpha=\beta=\gamma$. 
\end{enumerate}  
\end{theorem}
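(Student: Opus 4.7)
The plan is to reduce first the ambient algebra $S$ to a normal form, then to describe its central degree-$2$ elements, and finally to apply Sylvester where appropriate. For the first reduction, I would invoke the classification of $3$-dimensional Calabi-Yau AS-regular algebras as Jacobi algebras of cyclically symmetric cubic superpotentials (due to Bocklandt, and made explicit by Mori and others). Up to a $\GL_3(k)$-change of the degree-one generators, every such superpotential can be put in the form
\[
\omega = xyz+yzx+zxy+\alpha x^3+\beta y^3+\gamma z^3,
\]
and the associated Jacobi algebra is precisely $S^{(\alpha,\beta,\gamma)}$. A residual symmetry analysis, using the $S_3$-action permuting $x,y,z$ together with the diagonal rescalings that absorb or redistribute the cubic coefficients, then allows one to assume either $\alpha\beta\gamma=0$ or $\alpha=\beta=\gamma$.

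Next, I would compute $Z(S^{(\alpha,\beta,\gamma)})_2$ by directly imposing that a general element $\sum \lambda_{ij}\,e_ie_j$ (with $e_i\in\{x,y,z\}$) commute with each generator modulo the defining relations. Outside of the commutative locus, this forces the off-diagonal coefficients $\lambda_{ij}$ ($i\ne j$) to vanish, so $Z(S)_2 = kx^2\oplus ky^2\oplus kz^2$, and every nonzero central $f$ has the stated form $ax^2+by^2+cz^2$ with $(a:b:c)\in \PP^2$. When $S\cong k[x,y,z]$, on the other hand, $Z(S)_2 = S_2$, so $f$ is an arbitrary nonzero quadratic form; Sylvester's theorem then reduces $f$ by a linear change of variables to one of $x^2$, $x^2+y^2$, or $x^2+y^2+z^2$. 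Combining the two cases gives the list in the theorem.

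The main obstacle is the first step: passing from the abstract axioms of a $3$-dimensional Calabi-Yau quantum polynomial algebra to the explicit family $S^{(\alpha,\beta,\gamma)}$ with constraints $\alpha\beta\gamma=0$ or $\alpha=\beta=\gamma$. This rests on the twisted-superpotential description of AS-regular algebras and on a careful orbit analysis of cubic potentials under $\GL_3(k)$, both of which require input beyond the definition of a quantum polynomial algebra. Once that normal form is in hand, the center computation is a short linear algebra exercise and the commutative case is immediate from Sylvester.
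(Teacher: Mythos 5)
There is a genuine gap at the first step, and it is precisely the step the paper has to work hardest on. Your claim that every cyclically symmetric cubic superpotential for a $3$-dimensional Calabi--Yau quantum polynomial algebra can be normalized by $\GL_3(k)$ to $xyz+yzx+zxy+\alpha x^3+\beta y^3+\gamma z^3$ (equivalently, that every such $S$ is isomorphic to some $S^{(\alpha,\beta,\gamma)}$) is false. The classification of these algebras by their point schemes (the paper's Table 1, quoted from Itaba--Mori) contains many algebras not of this form: the skew polynomial rings $k\langle x,y,z\rangle/(yz-\alpha zy,\,zx-\alpha xz,\,xy-\alpha yx)$ with $\alpha\neq -1$, the Types T, T$'$, CC, TL, WL, the general Type EC algebras $\a yz+\b zy+\c x^2,\dots$, and so on. One quick way to see the normalization must fail: for generic $\alpha$ the skew polynomial ring has point-scheme automorphism $\sigma$ of infinite order, whereas the theorem you are proving only needs the $S$ with $Z(S)_2\neq 0$, and those turn out to satisfy $\bar\sigma^2=\id$. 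The real content of the paper's argument (Theorem \ref{thm.cla}) is exactly this implication: a nonzero $f\in Z(S)_2$ descends to a degree-one central element of the Veronese $S^{(2)}$, which by a graph-of-$\sigma$ argument forces $\sigma^2=\id$ on an irreducible component of $E$, and inspection of Table 1 then leaves only Types P, TL, S, S$'$, NC, EC in specific normal forms --- and \emph{those} are the $S^{(\alpha,\beta,\gamma)}$. Your proposal never rules out the other types, so as written it silently discards most of the ambient algebras without justification.

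Two smaller points. First, your residual-symmetry observation is essentially correct as far as it goes: $\alpha\beta\gamma$ is invariant under diagonal rescalings, and when it is nonzero one can rescale to $\alpha=\beta=\gamma$; but the paper obtains the constraint directly from its Table 2 rather than by an orbit computation, and in any case this only matters once you know you are inside the family $S^{(\alpha,\beta,\gamma)}$. Second, the assertion that $Z(S^{(\alpha,\beta,\gamma)})_2=kx^2\oplus ky^2\oplus kz^2$ for all noncommutative members of the family is not something you can get from "a short linear algebra exercise" uniformly in $(\alpha,\beta,\gamma)$; the paper cites a computed lemma of Hu for the specific normal forms in Table 2, and for parameters outside that list the degree-two center can vanish. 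The commutative case via Sylvester agrees with the paper and is fine.
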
 

The above classification of $A$ makes it possible to classify noncommutative conics $\Projn A$ up to isomorphism of noncommutative schemes by the methods explained below.


If 
$A=S/(f)$ is a noncommutative quadric hypersurface, then there exists a unique central element $f^!\in Z(A^!)_2$ such that $S^!=A^!/(f^!)$ where $A^!$ is the quadratic dual of $A$.  The finite dimensional algebra $C(A):=A^![(f^!)^{-1}]_0$ plays an important role to study $A$ by the following theorem:    

\begin{theorem} [{\cite[Proposition 5.2 (1)]{SV}}]
\label{thm.SV}
If $A$ is a noncommutative quadric hypersurface, then 
the stable category $\uCM^{\ZZ}(A)$ of the category of maximal Cohen-Macaulay graded right $A$-modules and the bounded derived category $\sD^b(\operatorname{mod} C(A))$ of the category of finitely generated right $C(A)$-modules are equivalent. 
\end{theorem}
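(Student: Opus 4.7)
The plan is to combine Buchweitz's description of the stable category of maximal Cohen--Macaulay modules with Koszul duality, followed by a central localization on the Koszul dual side. Since $S$ is $d$-dimensional AS-regular and $f \in S_2$ is a regular normal element, the quotient $A = S/(f)$ is AS-Gorenstein of dimension $d-1$. Buchweitz's theorem in its graded version then supplies a triangle equivalence
$$\uCM^{\ZZ}(A) \simeq \sD^b(\grmod A)/\sD^{\operatorname{perf}}(A),$$
so the task reduces to identifying the Verdier quotient on the right with $\sD^b(\operatorname{mod} C(A))$.

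Next I would exploit Koszul duality on the $A^!$ side. Because $S$ is AS-regular with Hilbert series $(1-t)^{-d}$ and generated in degree one, it is Koszul; the quotient $A = S/(f)$ by a quadratic normal element is again Koszul, and its Koszul dual $A^!$ is a finite-dimensional Frobenius algebra. The central regular element $f$ dualizes to a central element $f^! \in Z(A^!)_2$ satisfying $S^! = A^!/(f^!)$, and since $f^!$ is a nonzerodivisor of degree $2$ on the Frobenius algebra $A^!$, the central localization $A^![(f^!)^{-1}]$ is $\ZZ$-graded and $2$-periodic. Its degree-zero component is $C(A)$, and the periodicity gives an equivalence
$$\grmod A^![(f^!)^{-1}] \simeq \operatorname{mod} C(A), \qquad \sD^b(\grmod A^![(f^!)^{-1}]) \simeq \sD^b(\operatorname{mod} C(A)).$$

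The final, most delicate step is to show that Koszul duality matches $\sD^b(\grmod A)/\sD^{\operatorname{perf}}(A)$ with $\sD^b(\grmod A^![(f^!)^{-1}])$. The Koszul functor $\RHom_A(k,-)$ yields an equivalence between appropriate bounded derived categories of linear complexes, and the key claim to establish is that it carries $\sD^{\operatorname{perf}}(A)$ precisely onto the full subcategory of $f^!$-torsion (equivalently, finite-dimensional) complexes on the $A^!$ side. Quotienting by this torsion subcategory then has the same effect as inverting $f^!$ centrally, yielding the desired identification.

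The main obstacle is exactly this last matching: one has to extend the standard Koszul correspondence, normally formulated in the AS-regular/finite-dimensional setting, to the singular AS-Gorenstein algebra $A$, and track how the paired central elements $(f, f^!)$ interact with minimal graded free resolutions. Smith and Van den Bergh circumvent a head-on Koszul-duality computation by exploiting the Clifford-type structure on $A^!$ induced by $f^!$: using it they directly build a functor $\operatorname{mod} C(A) \to \uCM^{\ZZ}(A)$ of matrix-factorization flavor and verify separately that it is fully faithful and essentially surjective, which is the technical heart of the proof.
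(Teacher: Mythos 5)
First, note that the paper does not prove this statement at all: it is imported verbatim as \cite[Proposition 5.2(1)]{SV}, so your attempt can only be measured against Smith--Van den Bergh's own argument. Your outline does reproduce the skeleton of that argument --- Buchweitz's identification $\uCM^{\ZZ}(A)\cong \sD^b(\grmod A)/\sD^{\mathrm{perf}}(A)$, passage to the $A^!$ side by Koszul duality, and Dade's theorem for the strongly graded localization $A^![(f^!)^{-1}]$ to land in $\mod C(A)$ --- so the architecture is the right one.

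Two things, however, keep this from being a proof. First, a factual error: $A^!$ is \emph{not} a finite-dimensional Frobenius algebra. Since $H_A(t)=(1+t)(1-t)^{-(d-1)}$ and $A$ is Koszul, $H_{A^!}(t)=(1+t)^{d-1}/(1-t)$, so $A^!$ is an infinite-dimensional noetherian algebra, finite over the central subalgebra $k[f^!]$; it is the quotient $S^!=A^!/(f^!)$ that is finite-dimensional Frobenius. Your own next clause (that $f^!$ is a degree-$2$ nonzerodivisor on $A^!$) is incompatible with finite-dimensionality. Second, and more seriously, the pivotal step --- that Koszul duality induces an equivalence between $\sD^b(\grmod A)/\sD^{\mathrm{perf}}(A)$ and $\sD^b(\qgr A^!)\cong\sD^b(\grmod A^![(f^!)^{-1}])$ --- is exactly what you call ``the key claim to establish'' and then never establish; it is the entire content of the theorem. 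In particular $\RHom_A(k,-)$ does not carry $\sD^b(\grmod A)$ into $\sD^b(\grmod A^!)$ (bounded complexes over the singular algebra $A$ have unbounded Ext against $k$), so one must pass to the appropriate locally finite or completed subcategories and then verify that perfect complexes correspond precisely to $f^!$-torsion objects; this is where all the work in [SV, \S\S 4--5] lies. Your closing description of their method as bypassing Koszul duality in favour of a direct matrix-factorization-style functor is also not accurate: their Section 4 sets up the Koszul dual $A^!$ precisely to run the chain of equivalences above. As written, the proposal is a correct roadmap with the destination asserted rather than reached.
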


Let $A$ and $A'$ be noncommutative quadric hypersurfaces. Despite the notation, it is not trivial to see if $A\cong A'$ implies $C(A)\cong C(A')$ from the definition.  By the above theorem, if $\grmod A\cong \grmod A'$,  
then $C(A)$ and $C(A')$ are derived equivalent. 
In this paper, we show that $\grmod A\cong \grmod A'$ implies
$C(A)\cong C(A')$ (Lemma \ref{lem.cpn}), 
so it is reasonable to classify $C(A)$, which we will do in the case $d=3$.   

On the other hand, the point variety is an important invariant in noncommutative algebraic geometry.  In this paper, we show that every noncommutative conic $A$ has a point variety $E_A\subset \PP^2$ in the sense of this paper (Proposition \ref{prop.g1}).  The point variety $E_A$ plays an essential role in completing the classification of $C(A)$.   
The following is a summary of the classifications of $C(A)$ and $E_A$ (Theorem \ref{thm.main2}): 

\begin{table}[h] 
\begin{center}
\begin{tabular}{|c|c|c|}  \hline
$C(A)$    &  $E_A$ & An example of $A$ \\ 
\hline \hline
$k\langle u,v\rangle /(uv+vu, u^2,v^2)$ & {\rm a line}  &  $k[x,y,z]/(x^2)$ \\ \hline
$k\langle u,v\rangle /(uv+vu, u^2,v^2-1)$ & {\rm two lines} &  $k[x,y,z]/(x^2 + y^2)$\\ \hline
$\operatorname{M}_2(k)$ & {\rm  a smooth conic} &  $k[x,y,z]/(x^2 + y^2 + z^2)$\\ \hline
$k[u,v]/(u^2,v^2)$  & {\rm  a line}  &  $S^{(0, 0, 0)}/(x^2)$ \\ \hline 
$k[u]/(u^4)$  & $1$ {\rm point}  &  $S^{(1, 1, 0)}/(x^2)$ \\ \hline 
$k[u]/(u^3) \times k$ & $2$ {\rm points}  &  $S^{(1, 1, 0)}/(3x^2 + 3y^2 + 4z^2)$  \\ \hline 
$k[u]/(u^2) \times k[u]/(u^2)$ & $3$ {\rm points} &  $S^{(0, 0, 0)}/(x^2+y^2)
$\\ \hline 
$k[u]/(u^2) \times k^2$ & $4$ {\rm points} & $S^{(1, 1, 0)}/(x^2 + y^2 - 4 z^2)$ \\ \hline 
$k^4$ & $6$ {\rm points} & $S^{(0, 0, 0)}/(x^2+y^2+z^2)
$\\ \hline 
\end{tabular}
\end{center}
\end{table}

Let $A$ and $A'$ be noncommutative conics.  We show that $\uCM^{\ZZ}(A)\cong \uCM^{\ZZ}(A')$ if and only if $C(A)\cong C(A')$ (Theorem \ref{thm.mcc}), and that $\Projn A\cong \Projn A'$ implies $C(A)\cong C(A')$ (Theorem \ref{thm.GA}).   Together with the above classifications of $A$ and $C(A)$, we can conclude that there are exactly 9 isomorphism classes of homogeneous coordinate algebras $A$ of noncommutative conics, and that there are exactly 9 isomorphism classes of noncommutative conics $\Projn A$ (Theorem \ref{thm.HMM}).
 
\section{Preliminaries} 

Throughout this paper, we fix an algebraically closed field $k$ of characteristic 0.  
All algebras and schemes in this paper are defined over $k$.  

In this paper, a variety means a reduced scheme (which can be reducible).  For a scheme $X$, we denote by $\overline X$ when we view $X$ as a variety (a scheme $X$ with the reduced induced structure).    
We denote by $\#(X)$ the number of points of $\overline X$ (without counting multiplicity). 
For 
$\s\in \Aut X$, we denote by $\bar \s\in \Aut \overline X$ the automorphism of a variety $\overline X$ induced by $\s\in \Aut X$.
For a variety $X\subset \PP^{n-1}$ and $\s\in \Aut X$, we denote by 
$\Delta _{X, \s}:=\{(p, \s(p))\in \PP^{n-1}\times \PP^{n-1}\mid p\in X\}$ the graph of $X$ by $\s$. 

For $f=\sum_{1\leq i, j\leq n}a_{ij}x_ix_j\in k\<x_1, \dots, x_n\>_2$, we define $\tilde f:=\sum_{1\leq i, j\leq n}a_{ij}x_{ij}\in k[x_{11}, x_{12}, \dots, x_{nn}]_1$.  We denote by $\Sigma :\PP^{n-1}\times \PP^{n-1}\to \PP^{n^2-1}$ the Segre embedding.  
For a finite dimensional vector space $V$, we write $U:=V\otimes V$, so that if $(p, q)\in \PP(V^*)\times \PP(V^*)$, then $\Sigma (p, q)\in \PP(U^*)$. (If $V=\sum_{i=1}^nkx_i$, then $U=\sum_{i, j=1}^nkx_{ij}$ where $x_{ij}:=x_i\otimes x_j$.)   Moreover, for a subspace $W\subset V^{\otimes 2i}$, we write $\widetilde W$ when we view $W$ as a subspace of $U^{\otimes i}$.  By this notation, $\cV(W)\subset \PP(V^*)^{\times 2i}$ while $\cV(\widetilde W)\subset \PP(U^*)^{\times i}$.  We often use the following lemma.

\begin{lemma} \label{lem.tilw} 
Let $V$ be a finite dimensional vector space and $U=V\otimes V$.  For a subspace $W\subset V^{\otimes 2i}$, $\cV(\widetilde W)\cap \Im(\Sigma^{\times i})=\Sigma^{\times i}(\cV(W))$.  
\end{lemma}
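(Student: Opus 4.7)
The plan is to reduce the identity of varieties to a pointwise identity arising from the natural isomorphism $V^{\otimes 2i}\cong U^{\otimes i}$ (which is, by definition, how $W$ becomes $\widetilde W$) together with the Segre map realizing this isomorphism on dual points. The main step will be to establish, for every $w\in V^{\otimes 2i}$ and every $(p_1,\dots,p_{2i})\in \PP(V^*)^{\times 2i}$, the evaluation identity
$$w(p_1,p_2,\dots,p_{2i}) \;=\; \widetilde w\bigl(\Sigma(p_1,p_2),\,\Sigma(p_3,p_4),\,\dots,\,\Sigma(p_{2i-1},p_{2i})\bigr),$$
where $w$ is viewed as a multilinear form on $(V^*)^{\times 2i}$ and $\widetilde w$ as a multilinear form on $(U^*)^{\times i}$.

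To verify this identity, I would check it on a basis. Take $w=x_{j_1}\otimes x_{j_2}\otimes\cdots\otimes x_{j_{2i}}$, so that $\widetilde w=x_{j_1 j_2}\otimes x_{j_3 j_4}\otimes\cdots\otimes x_{j_{2i-1}j_{2i}}$. The left-hand side evaluates to $\prod_{l=1}^{2i} p_l(x_{j_l})$. For the right-hand side, the Segre embedding is defined so that $\Sigma(p_k,p_{k+1})(x_{ab})=p_k(x_a)p_{k+1}(x_b)$, which immediately yields the same product. Extending by multilinearity gives the identity for all $w\in V^{\otimes 2i}$, and hence for every $w\in W$ simultaneously.

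Granting the evaluation identity, both inclusions become formal. For $(\supseteq)$, if $(p_1,\dots,p_{2i})\in \cV(W)$, then for every $w\in W$ the left-hand side vanishes, so $\widetilde w$ vanishes at $\Sigma^{\times i}(p_1,\dots,p_{2i})$; thus $\Sigma^{\times i}(p_1,\dots,p_{2i})\in \cV(\widetilde W)\cap \Im(\Sigma^{\times i})$. Conversely, for $(\subseteq)$, any point of $\cV(\widetilde W)\cap \Im(\Sigma^{\times i})$ has the form $\Sigma^{\times i}(p_1,\dots,p_{2i})$, and the vanishing of $\widetilde w$ there forces the vanishing of $w$ at $(p_1,\dots,p_{2i})$ for every $w\in W$, placing $(p_1,\dots,p_{2i})$ in $\cV(W)$.

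The only real obstacle is bookkeeping: making sure the canonical isomorphism $V^{\otimes 2i}\cong U^{\otimes i}$ underlying the notation $w\mapsto \widetilde w$ is compatible with the pairing on $(U^*)^{\otimes i}$ induced, via $\Sigma$, from the pairing on $(V^*)^{\otimes 2i}$. Since this compatibility is exactly what the Segre map is designed to encode, checking it on basis tensors as above is the only work required.
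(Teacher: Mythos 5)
Your proof is correct; the paper in fact states Lemma \ref{lem.tilw} without proof, treating it as immediate from the definitions, and your basis-tensor verification of the evaluation identity $w(p_1,\dots,p_{2i})=\widetilde w(\Sigma(p_1,p_2),\dots,\Sigma(p_{2i-1},p_{2i}))$ is exactly the standard argument that justifies it. Both inclusions then follow formally as you describe, so nothing is missing.
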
 

\begin{definition} Let $S=k\<x_1, \dots, x_n\>/(f_1, \dots, f_m)$ be a quadratic algebra.  We define 
$$\G=\G_S:=\Sigma^{-1}(\Proj k[x_{11}, x_{12}, \dots, x_{nn}]/(\tilde f_1, \dots, \tilde f_m))\subset \PP^{n-1}\times \PP^{n-1}.$$
\begin{enumerate}
\item{} We say that $S$ satisfies (G1)$^+$ if there exists a scheme $E\subset \PP^{n-1}$ such that both projections $p_1, p_2:\PP^{n-1}\times \PP^{n-1}\to \PP^{n-1}$ restrict to isomorphisms $p_1|_{\G}, p_2|_{\G}:\G\to E$.  In this case, we call $E$ the point scheme of $S$,
and write $\cP^+(S)=(E, \s)$ 
where $\s:=p_2|_{\G}(p_1|_{\G})^{-1}\in \Aut E$. 
\item{} We say that $S$ satisfies (G1) if there exists a variety $E\subset \PP^{n-1}$ and an automorphism of a variety $\s\in \Aut E$ such that 
$\overline {\G}=\Delta_{E, \s}$.
In this case, we call $E$ the point variety of $S$, and write $\cP(S)=(E, \s)$. 
\item{} We say that $S$ satisfies (G2) if  
$$S=k\<x_1, \dots, x_n\>/(f\in k\<x_1, \dots, x_n\>_2\mid f|_{\overline \G}=0).$$    
\end{enumerate}
\end{definition} 

\begin{remark} \label{rem.g1g2} 
Let $S=k\<x_1, \dots, x_n\>/(f_1, \dots, f_m)$ be a quadratic algebra. Since $\overline \G=\{(p, q)\in \PP^{n-1}\times \PP^{n-1}\mid f_1(p, q)=\cdots =f_m(p, q)=0\}$, if $S$ satisfies (G1)$^+$ with $\cP^+(S)=(E, \s)$, then $S$ satisfies (G1) with $\cP(S)=(\overline {E}, \overline {\s})$.  It follows that $\cP^+(S)=\cP(S)$ if and only if $E$ is a reduced scheme (a variety). 
\end{remark} 

\begin{lemma} [{\cite[Lemma 2.5(1)]{MU1}}]\label{lem.es} 
Let $S, S'$ be quadratic algebras satisfying (G1) with $\cP(S)=(E, \s), \cP(S')=(E', \s')$.  If $S\cong S'$ as graded algebras, then $E\cong E'$ as varieties and $|\s|=|\s'|$. 
\end{lemma}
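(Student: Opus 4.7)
The plan is to track how a graded algebra isomorphism $\phi\colon S\to S'$ transports the geometric data $\overline{\G}_S \subset \PP(S_1^*)\times \PP(S_1^*)$ onto $\overline{\G}_{S'} \subset \PP(S_1'{}^*)\times \PP(S_1'{}^*)$, and then to read off the statement from the definitions of $\cP(S)$ and $\cP(S')$.

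First, since $S$ and $S'$ are generated in degree $1$ and $\phi$ is an isomorphism, $n:=\dim_k S_1 = \dim_k S_1'$. The restriction $\phi_1:=\phi|_{S_1}\colon S_1\to S_1'$ is a linear isomorphism, whose contragredient $(\phi_1^{-1})^*\colon S_1^*\to S_1'{}^*$ induces a projective linear isomorphism $\widetilde\phi\colon \PP(S_1^*)\to \PP(S_1'{}^*)$. Identifying $S_1\otimes S_1$ with bilinear forms on $S_1^*\times S_1^*$ via the natural pairing, one has the naturality identity $r(\phi_1^*(p),\phi_1^*(q)) = \phi_1^{\otimes 2}(r)(p,q)$ for $r\in S_1\otimes S_1$ and $p,q \in S_1'{}^*$. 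Because $\phi$ is a graded algebra map, $\phi_1^{\otimes 2}$ carries the space of quadratic relations $R_S\subset S_1^{\otimes 2}$ isomorphically onto $R_{S'}\subset (S_1')^{\otimes 2}$. Combining these two observations gives $(\widetilde\phi\times \widetilde\phi)(\overline{\G}_S) = \overline{\G}_{S'}$.

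Writing $\overline{\G}_S = \Delta_{E,\sigma}$ and $\overline{\G}_{S'} = \Delta_{E',\sigma'}$, the first-coordinate projection $p_1$ restricts to an isomorphism $\Delta_{E,\sigma}\to E$, and likewise $p_1$ restricts to $\Delta_{E',\sigma'}\to E'$. Combined with $(\widetilde\phi\times\widetilde\phi)|_{\overline\G_S}$, this yields an isomorphism of varieties $\psi := \widetilde\phi|_E\colon E \to E'$. The relation $p_2 = \sigma\circ p_1$ on $\Delta_{E,\sigma}$ (and analogously for the primed data), together with compatibility of $\widetilde\phi\times \widetilde\phi$ with both projections, forces $\psi\circ \sigma = \sigma'\circ \psi$. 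Thus $\sigma'$ is conjugate to $\sigma$ in $\Aut E$ (via $\psi$), and in particular $|\sigma| = |\sigma'|$.

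The main obstacle is simply bookkeeping: one must be careful with the duality conventions so that the description of $\G_S$ via the Segre embedding in the paper is compatible with the ``change of coordinates'' $\widetilde\phi$ induced by $\phi_1$. Once this is sorted out, the argument is essentially a naturality statement and no further geometry beyond the definitions is required.
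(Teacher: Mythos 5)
Your argument is correct and is exactly the standard change-of-coordinates proof: the paper itself gives no proof here but cites \cite[Lemma 2.5(1)]{MU1}, where the argument is precisely that the projective linear map induced by $(\phi_1^{-1})^*$ carries $\overline{\G}_S=\Delta_{E,\sigma}$ onto $\overline{\G}_{S'}=\Delta_{E',\sigma'}$, giving an isomorphism $E\to E'$ that conjugates $\sigma$ to $\sigma'$. No gaps; the bookkeeping you flag (dualizing conventions and the compatibility of $\cV(R)$ with the Segre description of $\G$) is the only delicate point and you handle it correctly.
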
 

\begin{lemma} \label{lem.g1c}  
If $A$ is a commutative quadratic algebra, then $A$ satisfies (G1)$^+$ with $\cP^+(A)=(\Proj A, \id)$. 
In particular, $A$ satisfies (G1) with $\cP(A)=(\overline{\Proj A}, \id)$.  
\end{lemma}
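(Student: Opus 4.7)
The plan is to write $A$ in its quadratic presentation as a noncommutative algebra and directly compute $\G_A$ from the definition. If $A = k[x_1,\dots,x_n]/(f_1,\dots,f_m)$ is commutative and quadratic, then as a quadratic algebra it is presented by
$$A = k\<x_1,\dots,x_n\>/(f_1,\dots,f_m,\, g_{ij} : 1\le i<j\le n),$$
where $g_{ij} := x_ix_j - x_jx_i$ are the commutation relations (which must be included among the defining relations when we view $A$ as a quadratic algebra).

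The first step is to describe $\G_A \subset \PP^{n-1}\times\PP^{n-1}$ using the commutation relations. Since $\tilde g_{ij} = x_{ij} - x_{ji}$, the condition $\tilde g_{ij}(\Sigma(p,q))=0$ becomes $p_iq_j - p_jq_i = 0$ for all $i<j$, which is exactly the condition that $p = q$ as points of $\PP^{n-1}$. Hence $\G_A$ is contained in the diagonal $\Delta \subset \PP^{n-1}\times\PP^{n-1}$. The next step is to impose the remaining equations $\tilde f_k(\Sigma(p,p))=0$; since $\tilde f_k(\Sigma(p,p)) = f_k(p)$ (viewing $f_k$ as a polynomial in $k[x_1,\dots,x_n]$), we conclude
$$\G_A = \{(p,p)\in\PP^{n-1}\times\PP^{n-1} \mid f_1(p)=\cdots=f_m(p)=0\} = \Delta_{\Proj A,\,\id}.$$

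The two projections $p_1, p_2 : \PP^{n-1}\times\PP^{n-1}\to\PP^{n-1}$ restrict to the same morphism $\G_A \to \Proj A$ sending $(p,p)\mapsto p$, and this is an isomorphism with inverse the diagonal embedding $p\mapsto (p,p)$. Therefore $A$ satisfies (G1)$^+$ with $\cP^+(A) = (\Proj A,\,\id)$, and the automorphism $\sigma = p_2|_{\G_A}(p_1|_{\G_A})^{-1}$ is clearly the identity.

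The ``in particular'' clause is immediate from Remark~\ref{rem.g1g2}: once (G1)$^+$ holds with $\cP^+(A)=(\Proj A,\id)$, then (G1) holds with $\cP(A)=(\overline{\Proj A},\overline{\id})=(\overline{\Proj A},\id)$. There is no real obstacle here; the only point that could cause confusion is the (mild but essential) observation that the quadratic presentation of a commutative algebra must include the commutation relations $x_ix_j-x_jx_i$, and it is exactly these relations that force $\G_A$ to sit inside the diagonal and yield $\sigma=\id$.
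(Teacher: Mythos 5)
Your underlying idea is the same as the paper's: the commutation relations $x_ix_j-x_jx_i$ force $\G_A$ into the diagonal, and the remaining relations then cut out $\Proj A$ there. For the ``in particular'' clause ((G1), which only concerns the reduced structure) your argument is complete. But for the main assertion there is a gap: (G1)$^+$ and the equality $\cP^+(A)=(\Proj A,\id)$ are statements about \emph{schemes}, and $\Proj A$ is in general non-reduced (e.g.\ $A=k[x,y,z]/(x^2)$, whose point scheme is a double line --- a case the paper genuinely uses, cf.\ Remark~\ref{rem.g1g2} and Table~3). Your computation of $\G_A$ is carried out entirely at the level of closed points (``the condition that $p=q$ as points'', ``$\G_A=\{(p,p)\mid f_k(p)=0\}$''), so what it actually identifies is $\overline{\G_A}$ with $\overline{\Proj A}$; it does not show that the closed subscheme $\G_A=\Sigma^{-1}\bigl(\Proj k[x_{11},\dots,x_{nn}]/(\tilde f_k,\tilde g_{ij})\bigr)$, with its possibly non-reduced structure, is isomorphic as a scheme to $\Proj A$.

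The missing step is precisely what the paper supplies: one must work with homogeneous coordinate rings rather than point sets, using that $\Im\Sigma=\Proj(S\circ S)$ and that the quotient by the commutation relations satisfies $(S\circ S)/(x_{ij}-x_{ji})\cong S^{(2)}$, so that adjoining the $\tilde f_k$ yields $(S\circ S)/(x_{ij}-x_{ji},\tilde f_1,\dots,\tilde f_m)\cong A^{(2)}$ and hence a scheme isomorphism $\G_A\to\Proj A^{(2)}\cong\Proj A$. Without some such ring-level identification you cannot conclude that the projections restrict to isomorphisms of schemes onto $\Proj A$ (as opposed to onto $\overline{\Proj A}$), which is exactly the content distinguishing (G1)$^+$ from (G1).
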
 

\begin{proof} First, let $S=k[x_1, \dots, x_n]$ so that $\Proj S=\PP^{n-1}$ and $\Im \Sigma=\Proj (S\circ S)$ where $S\circ S$ is the Segre product.  Since $(S\circ S)/(x_{ij}-x_{ji})_{1\leq i, j\leq n}\cong S^{(2)}$, we have a natural isomorphism  
\begin{align*}
\phi: \G_S & \to \Proj k[x_{11}, \dots, x_{nn}]/(x_{ij}-x_{ji})_{1\leq i, j\leq n}\cap \Im \Sigma \\ 
& \to \Proj (S\circ S)/(x_{ij}-x_{ji})_{1\leq i, j\leq n} \\
& \to \Proj S^{(2)}\to \Proj S=\PP^{n-1}
\end{align*}  
by Lemma \ref{lem.tilw}.  If $p_1, p_2:\PP^{n-1}\times \PP^{n-1}\to \PP^{n-1}$ are projections, then we can see that $p_1|_{\G_S}=p_2|_{\G_S}=\phi$, so $S$ satisfies (G1)$^+$ with $\cP^+(S)=(\PP^{n-1}, \id)$. 

If $A=S/(f_1, \dots, f_m)$ is a commutative quadratic algebra, then we can see that the isomorphism $(S\circ S)/(x_{ij}-x_{ji})_{1\leq i, j\leq n}\to S^{(2)}; \; \tilde f\mapsto \bar f$ induces an isomorphism 
$$(S\circ S)/(x_{ij}-x_{ji}, \tilde f_1, \dots, \tilde f_m)\to  S^{(2)}/(\bar f_1, \dots, \bar f_m) 
\cong A^{(2)},$$ 
so $\phi$ restricts to an isomorphism  
\begin{align*}
\phi|_{\G_A}: \G_A & \to \Proj k[x_{11}, \dots, x_{nn}]/(x_{ij}-x_{ji}, \tilde f_1, \dots, \tilde f_m)\cap \Im \Sigma \\ 
& \to  \Proj (S\circ S)/(x_{ij}-x_{ji}, \tilde f_1, \dots, \tilde f_m) \\
& \to \Proj A^{(2)} \to  \Proj A\subset \PP^{n-1}
\end{align*}  
by Lemma \ref{lem.tilw}.  Since $p_1|_{\G_A}=p_2|_{\G_A}=\phi|_{\G_A}$, $A$ satisfies (G1)$^+$ with $\cP^+(A)=(\Proj A, \id)$. 
\end{proof} 

\begin{theorem} [{\cite{ATV}}]
Every 3-dimensional quantum polynomial algebra $S$ satisfies (G1)$^+$. 
The point scheme of a 3-dimensional quantum polynomial algebra is either $\PP^2$ or a cubic divisor in $\PP^2$.  
\end{theorem}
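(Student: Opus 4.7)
The plan is to follow the original Artin--Tate--Van den Bergh strategy. Since $H_S(t) = 1/(1-t)^3$ forces $\dim_k S_1 = 3$ and $\dim_k S_2 = 6$, the algebra $S = k\langle x_1, x_2, x_3\rangle/(f_1, f_2, f_3)$ has exactly three quadratic relations. First, I would use conditions (1) and (2) in the definition of a quantum polynomial algebra (together with Koszulity, which is forced by the Hilbert series and the Ext computation) to produce the minimal graded free resolution of $k$ as a right $S$-module in the form
\[
0 \to S(-3) \to S(-2)^3 \xrightarrow{M} S(-1)^3 \to S \to k \to 0,
\]
where $M = (M_{ij})$ is a $3 \times 3$ matrix of linear forms in $S$ and the last nontrivial map is multiplication by $(x_1, x_2, x_3)^{T}$. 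The quadratic relations are then recovered from the identity $M \cdot x = 0$, that is, $f_i = \sum_j M_{ij} x_j$.

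Next, I would identify $\G_S$ explicitly via $M$. Writing $f_i = \sum_{j,k} m_{i,jk} x_j x_k$, polarization gives $\tilde f_i(p, q) = \sum_{j,k} m_{i,jk} p_j q_k = \bigl(M(q) \cdot p\bigr)_i$, where $M(q)$ denotes the scalar matrix obtained by evaluating each linear form $M_{ij}$ at $q$. Hence
\[
\G_S = \{(p, q) \in \PP^2 \times \PP^2 : M(q) \cdot p = 0\},
\]
and the projection $p_2|_{\G_S} : \G_S \to \PP^2$ has scheme-theoretic image $E := \cV(\det M(y))$. Since $\det M(y)$ is a cubic polynomial in the commuting variables $y_1, y_2, y_3$, either $\det M \equiv 0$ and $E = \PP^2$, or $E$ is a cubic divisor in $\PP^2$. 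This already establishes the second assertion of the theorem.

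To finish (G1)$^+$, I must verify that $p_1|_{\G_S}$ and $p_2|_{\G_S}$ are scheme isomorphisms onto a common image. The fiber of $p_2|_{\G_S}$ over $q \in E$ is the projectivization of $\ker M(q)$, so $p_2|_{\G_S}$ is an isomorphism onto $E$ precisely when $\rank M(q) = 2$ at every point of $E$. If $\rank M(q_0) \leq 1$ at some $q_0$, the fiber would have positive dimension, forcing $\dim \G_S \geq 2$; one then derives a contradiction from the AS-regularity of $S$, for instance by evaluating the minimal resolution at $q_0$ and comparing the resulting Hilbert series with $1/(1-t)^3$. By the self-dual shape of the resolution there is also a companion matrix $M'$ of linear forms with $\G_S = \{(p,q) : M'(p) \cdot q = 0\}$; a symmetric argument shows $p_1|_{\G_S}$ is an isomorphism onto $\cV(\det M'(x))$, and a direct comparison identifies the two cubics as the same subscheme $E$. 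Finally, $\sigma := (p_2|_{\G_S})(p_1|_{\G_S})^{-1} \in \Aut E$ gives $\cP^+(S) = (E, \sigma)$.

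The main obstacle is the rank-$2$ verification on $E$: it is not extractable from $\det M$ and the Hilbert series alone, and genuinely requires the full AS-regular structure (in particular, the exactness of the minimal resolution of $k$ with the prescribed Betti numbers). The ATV proof handles this by classifying the normal forms of $M$ under the natural $\GL(V)$-action and eliminating each degenerate case one by one; my plan is to import that case analysis rather than attempt a more conceptual shortcut.
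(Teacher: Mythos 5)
The paper gives no proof of this statement---it is quoted verbatim from \cite{ATV}---and your outline is a faithful reconstruction of the argument in that reference: the shape $0\to S(-3)\to S(-2)^3\xrightarrow{M} S(-1)^3\to S\to k\to 0$ of the minimal resolution, the identification of $\G_S$ with the locus $M(q)\cdot p=0$, the dichotomy $\det M\equiv 0$ (giving $E=\PP^2$) versus $\det M$ a nonzero cubic, and the rank-$2$ verification needed for both projections to be isomorphisms are exactly the ATV steps. Your explicit acknowledgment that the rank condition cannot be extracted from $\det M$ and the Hilbert series alone, and must be imported from ATV's normal-form case analysis, correctly locates where the real content of the theorem lies, so there is nothing further to compare.
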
 

If $S$ is a 3-dimensional quantum polynomial algebra with $\cP^+(S)=(E, \s)$, then $S$ can be recovered by the pair $(E, \s)$ by \cite{ATV}, so we write $S=\cA^+(E, \s)$.  
We define a type of a $3$-dimensional quantum polynomial algebra $S=\cA^+(E, \s)$ in terms of the point scheme $E\subset \PP^2$.  
\begin{description}
\item[{\rm Type P}] 
      $E$ is $\mathbb{P}^{2}$. 
\item [{\rm Type S}] 
      $E$ is a triangle. 
\item[{\rm Type S'}] 
      $E$ is a union of a line and a conic meeting at two points. 
\item[{\rm Type T}]
      $E$ is a union of three lines meeting at one point. 
\item[{\rm Type T'}]
      $E$ is a union of a line and a conic meeting at one point. 
\item[{\rm Type NC}]
          $E$ is a nodal cubic curve. 
\item[{\rm Type CC}] $E$ is a cuspidal cubic curve. 
\item[{\rm Type TL}] $E$ is a triple line. 
\item[{\rm Type WL}] $E$ is a union of a double line and a line. 
\item[{\rm Type EC}] $E$ is an elliptic curve. 
\end{description}

\begin{remark} \label{rem.g2g1} 
Let $S=\cA^+(E, \s)$ be a $3$-dimensional quantum polynomial algebra.  By Remark \ref{rem.g1g2}, 
$\cP(S)=(E, \s)$ if and only if $E$ is reduced, that is, $S$ is not of Type TL nor Type WL. 
In this case, $S$ satisfies (G2). 
\end{remark} 

\begin{lemma} 
\label{lem.SP}
Let $S=\mathcal{A}^+(E,\sigma)=k\langle x,y,z \rangle/(f_{1}, f_{2}, f_{3})$ be a $3$-dimensional Calabi-Yau quantum polynomial algebra.   
Then Table $1$ below gives a list of defining relations 
$f_{1}, f_{2}, f_{3}$ up to isomorphism of the algebra $S$ and the corresponding 
$\overline E$ and $\bar \sigma^i$.  In Table 1 below, $\theta=\begin{pmatrix} 0 & 1 & 0 \\ 0 & 0 & 1 \\ 1 & 0 & 0 \end{pmatrix}$ and $\e\in k$ is a primitive 3rd root of unity.  Note that Type S is divided into Type S$_1$ and Type S$_3$ in terms of the form of $\s$.  
\end{lemma}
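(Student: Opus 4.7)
The plan is to combine the superpotential description of Calabi-Yau $3$-dimensional quantum polynomial algebras with the ATV recovery of $(E,\s)$ from the multilinearized defining relations.

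First I would invoke that every such $S$ is a derivation-quotient algebra of a twisted-cyclically invariant cubic potential $\omega \in V^{\otimes 3}$ with $V=kx+ky+kz$, and that the defining relations take the form $f_i=\partial_{x_i}\omega$. Classifying such potentials up to the induced $\GL(V)$-action yields a finite list of normal forms, from which the first column of Table $1$ is extracted: the commutative representatives, the Sklyanin-type three-parameter family $S^{(\a,\b,\c)}$ with $\a\b\c=0$ or $\a=\b=\c$, and their degenerations, including the elliptic subfamily controlled by a primitive third root~$\e$ of unity.

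Second, for each normal form I would compute $\overline{\G}_S\subset\PP^2\times\PP^2$ by multilinearizing $f_1,f_2,f_3$ and intersecting the three resulting $(1,1)$-hypersurfaces. Writing $f_\ell(x,y)=x^{\trans}M^{(\ell)}y$ and assembling a $3\times 3$ matrix $M(x)$ whose rows are linear in $x$, one has $E=\cV(\det M(x))\subset\PP^2$ and $\s(p)$ given by a generator of $\ker M(p)$ for $p\in E$; this is the standard ATV procedure, applicable because the algebras in question satisfy (G1)$^+$ by the ATV theorem quoted above. The output is the middle column of Table $1$ and the orders $|\bar\s^i|$, together with the identification of each representative with the correct geometric type (P, S$_1$, S$_3$, S$'$, T, T$'$, NC, CC, TL, WL, EC) by recognition of the cubic divisor $\det M$; the cyclic matrix $\theta$ in the statement arises as $\s$ for the Type~S$_3$ family.

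The main obstacle is the orbit analysis on cubic potentials: one must verify exhaustiveness, separate which loci within the $S^{(\a,\b,\c)}$ family yield distinct geometric types (notably the cases $\a\b\c=0$ versus $\a=\b=\c$, both of which appear but give different $E$), and handle the non-reduced types TL and WL where the point scheme acquires multiplicity so that only $\overline E$ and $\bar\s^i$ are stably readable (per Remark~\ref{rem.g2g1}). Once the cases are separated, extracting $\overline E$ and $\bar\s$ from $\det M$ and $\ker M$ is routine linear algebra.
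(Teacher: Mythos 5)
Your strategy is workable, but it takes a genuinely different route from the paper: the paper's own proof is essentially a citation plus spot checks --- the normal forms $f_1,f_2,f_3$ and the point schemes are imported from \cite[Lemma 3.3]{IMo}, the formulas for $\s^i$ from the proof of \cite[Theorem 3.4]{IMo}, and only Types T, TL, WL are verified by direct calculation (Example~\ref{ex.T} shows the pattern: check $f_j(p,\s(p))=0$ to identify $\s$, then establish the closed form for $\s^i$ by induction). Your superpotential-plus-multilinearization plan is in effect a self-contained re-derivation of what the cited reference does, which buys independence from \cite{IMo} at the cost of having to carry out the $\GL(V)$-orbit classification of potentials yourself --- and that exhaustiveness, which you correctly flag as ``the main obstacle,'' is the only non-routine content of the lemma and is left undischarged in your write-up. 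Three smaller points to repair: (i) $E=\cV(\det M)$ only when $\det M\not\equiv 0$; for Type P the determinant vanishes identically and $E=\PP^2$; (ii) the table records the iterates $\bar\s^i$ for \emph{all} $i$ (these feed into the later computations of $|\bar\s|$ and of when $\s^2=\id$ in Theorem~\ref{thm.cla}), whereas $\ker M(p)$ only produces $\s$ itself, so the inductions of Example~\ref{ex.T} are still needed; (iii) your description of the first column conflates Table~1 with the conic classification --- the constraints $\a\b\c=0$ or $\a=\b=\c$ belong to Corollary~\ref{cor.cgc}, while Table~1 lists all Calabi--Yau types, e.g.\ the full Type EC family subject only to $(\a^3+\b^3+\c^3)^3\neq(3\a\b\c)^3$ and $\a\b\c\neq 0$.
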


\begin{center}
{\setlength{\tabcolsep}{1pt}
\renewcommand\arraystretch{1.1}
{\small
\begin{table}
\begin{tabular}{|c|p{5cm}|p{2.7cm}|p{5.3cm}|}
\multicolumn{4}{c}{Table $1$}
\\ [2pt]
\hline  
       Type
           & 
           $f_{1}, f_{2}, f_{3}$
           & \rule{0pt}{8pt} $\overline E$
           & $\bar \sigma^i$
            \\   \hline\hline 
$\rm{P}$ 
            & 
           $
           \left\{
             \begin{array}{ll}
             yz-\alpha zy & \\
             zx-\alpha xz & \ \alpha^{3}=1\\
             xy-\alpha yx & 
             \end{array}
             \right.
             $
           & $ \mathbb{P}^{2}$
           &  
           $\sigma^i(a,b,c)=(a,\alpha ^ib,\alpha^{-i}c)$
           \\ \hline
$\rm{S}_1$ 
           & 
           $\left\{
             \begin{array}{ll}
              yz-\alpha zy &  \\
              zx-\alpha xz & \  \alpha^{3}\neq 0,1\\
              xy-\alpha yx & 
             \end{array}
             \right.
             $
           & $\begin{array}{ll}
           & \cV(x) \\
           \cup & \cV(y) \\
           \cup & \cV(z)
           \end{array}$
           & 
           $\left\{
             \begin{array}{ll}
              \sigma^i(0,b,c)=(0,b,\alpha^i c)\\
              \sigma^i(a,0,c)=(\alpha^i a,0,c)\\
               \sigma^i(a,b,0)=(a,\alpha^i b,0)
             \end{array}
             \right.
             $
           \\ \hline
$\rm{S}_3$ 
          & 
           $\left\{
             \begin{array}{ll}
             zy-\alpha x^{2} &  \\
             xz-\alpha y^{2} & \ \alpha^{3}\neq 0,1\\
             yx-\alpha z^{2} & 
             \end{array}
             \right.
             $
           & $\begin{array}{ll}
           & \cV(x) \\
           \cup & \cV(y) \\
           \cup & \cV(z)
           \end{array}$
           & 
           $\left\{
             \begin{array}{ll}
              \sigma^{i}(0,b,c)=(0, b, \alpha^i c)\theta^i \\
              \sigma^{i}(a,0,c)=(\alpha^i a,0, c)\theta^i \\
              \sigma^{i}(a,b,0)=(a,\alpha^i b, 0)\theta^i 
             \end{array}
             \right.
             $
           \\ \hline
$\rm{S'}$ 
            &
            $\left\{
             \begin{array}{ll}
               yz-\alpha zy+x^{2} & \\
               zx-\alpha xz & \ \alpha^{3}\neq 0,1\\
               xy-\alpha yx &
             \end{array}
             \right.
             $
            & $\begin{array}{ll}
           & \cV(x) \\
           \cup & \cV(x^2-\l yz) \\
           & \l=\frac{\a^3-1}{\a}
           \end{array}$
            & 
            $\left\{
             \begin{array}{ll}
             \sigma^i(0,b,c)=(0,b, \alpha^i c)\\
             \sigma^i(a,b,c)=(a,\alpha^i b,\alpha^{-i} c)\\
             \end{array}
             \right.
             $
            \\[12.1pt] \hline
$\rm{T}$ 
          & 
            $\left\{
             \begin{array}{ll}
             yz-\a zy+x^{2} & \\
             zx-\a xz+y^{2} & \ \alpha^{3}=1\\
             xy-\a yx &
             \end{array}
             \right.
             $
            & $\begin{array}{ll}
           & \cV(x+y) \\
           \cup & \cV(\e x+y) \\
           \cup & \cV(\e^2x+y)
           \end{array}$
             & 
            $\left\{
             \begin{array}{ll}
             \sigma^i(a,-a,c)=\\(a,-\a^ia, i\a^{2i-2}a+\a^{2i}c)\\
             \sigma^i(a,-\e a,c)=\\(a,-\a^i\e a, i\a^{2i-2}\e^2a+\a^{2i}c)\\
             \sigma^i(a,-\e^2 a,c)=\\(a,-\a^i\e^2a, i\a^{2i-2}\e a+\a^{2i}c)
             \end{array}
             \right.
             $
           \\ \hline
$\rm{T'}$ 
          & 
            $\left\{
             \begin{array}{ll}
            yz-zy+xy+yx\\
            zx-xz+x^{2}-yz-zy+y^{2}\\
            xy-yx-y^{2}
             \end{array}
             \right.
             $
            & $\begin{array}{ll}
           & \cV(y) \\
           \cup & \cV(x^2-yz)
           \end{array}$
            & 
            $\left\{
             \begin{array}{ll}
             \sigma^i(a,0,c)=(a,0,ia+c), \\
             \sigma^i(a,b,c)=\\(a-ib,b,-2ia+i^2b+c) 
             \end{array}
             \right.
             $
            \\ \hline
$\rm{NC}$ 
           & 
            $\left\{
             \begin{array}{ll}
               yz-\alpha zy+x^{2} & \\
               zx-\alpha xz+y^{2} & \ \alpha^{3}\neq 0,1\\
               xy-\alpha yx &
             \end{array}
             \right.
             $
          & $\begin{array}{ll}
          \mathcal{V}(x^{3}+y^{3} \\
            \quad -\l xyz) \\
           \l=\frac{\a^3-1}{\a}\end{array}$
            & 
             $
             \begin{array}{ll}
             \sigma^i(a,b,c)=\\
             (a,\alpha^i b,
             -\frac{\alpha^{3i}-1}{\a^{i-1}(\a^3-1)}
             \frac{a^{2}}{b}+\alpha^{2i}c) \\
             \end{array}
             $
           \\[12.1pt] \hline
$\rm{CC}$ 
         & 
         $\left\{
             \begin{array}{ll}
            yz-zy+y^{2}+3x^{2}\\
            zx-xz+yx+ xy-yz-zy\\
            xy-yx-y^{2}
             \end{array}
             \right.
             $
          & $\mathcal{V}(x^{3}-y^{2}z)$
          & $
             \begin{array}{ll}
               \sigma^i(a,b,c)=\\
               (a-ib,b,
               -3i\frac{a^2}{b}+3i^2a-i^3b+c) 
             \end{array}
             $
          \\ \hline
$\rm{TL}$
           & 
             $\left\{
             \begin{array}{ll}
             yz-\alpha zy +x^{2} & \\
             zx-\alpha xz & \ \alpha^{3}=1\\
             xy-\alpha yx
             \end{array}
             \right.
             $
           & $ \mathcal{V}(x^{3})$
            & 
             $\bar \s^i(0, b, c)=(0, b, \a^ic)$
             \\ \hline
$\rm{WL}$ 
             & 
            $\left\{
             \begin{array}{ll}
             yz-zy+y^{2}
             \vspace{0.5em} \\
             zx-xz+yx+xy\\
             xy-yx
             \end{array}
             \right.
             $
& $\begin{array}{ll}
           & \cV(x^2) \\
           \cup & \cV(y)
           \end{array}$
& 
            $\left\{
             \begin{array}{ll}
             \bar \sigma^i(0,b,c)=(0,b,-ib+c)\\
             \bar \sigma^i(a,0,c)=(a,0,c)\\
             \end{array}
             \right.
             $
            \\ \hline 
EC 
             & 
             \begin{tabular}{l}
            $\left\{
             \begin{array}{ll}
              \a yz+\b zy+\c x^{2} \\
              \a zx+\b xz+\c y^{2} \\
              \a xy+\b yx+\c z^{2}
             \end{array}
             \right.$ \\
             $(\a^3+\b^3+\c^3)^3\neq (3\a\b\c)^3$, \\
             $\a\b\c\neq 0$
             \end{tabular}      
            & $\begin{array}{ll}
            \mathcal{V}(x^{3}+y^{3}+z^{3} \\
            \quad -\lambda xyz) \\  
            \lambda=\frac{\a^3+\b^3+\c^3}{\a\b\c}
            \end{array}$
            & 
              $\s_p$ where $p=(\a,\b,\c)\in E$
            \\ \hline
\end{tabular}
\end{table}
}}
\end{center}

\begin{proof} If $S$ is of Type T, TL, WL, this follows by direct calculations (see Example \ref{ex.T} below).   
For the rest,  
$f_1, f_2, f_3$ and $E$ are given in \cite [Lemma 3.3]{IMo}, and $\s^i$ are given in the proof of \cite[Theorem 3.4]{IMo}. 
\end{proof}
 
\begin{example} \label{ex.T}
We give some calculations to show Table 1 for Type T. 
Let 
$S=k\langle x,y,z \rangle/(f_{1}, f_{2}, f_{3})$ be a 3-dimensional Calabi-Yau quantum polynomial algebra of Type T
where 
$$f_1=yz-\a zy+x^2, \;  
            f_2=zx-\a xz+y^2, \; 
            f_3=xy-\a yx,\; \; \; (\a^3=1)$$
            as in Table 1.         
Let $E=\cV(x^3+y^3)=\cV(x+y)\cup \cV(\e x+y)\cup \cV(\e^2x+y)\subset \PP^2$ and define a map $\s:E\to E$ by $$\sigma(a,b,c)=\begin{cases} (a,\a b, -\frac{a^2}{b}+\a^{2}c) & \textnormal { if } (a, b, c)\neq (0, 0, 1) \\
            (0, 0, 1) & \textnormal { if } (a, b, c)=(0, 0, 1). \end{cases}$$  
Since $\a^3=1$ and $a^3+b^3=0$,   
\begin{align*}
f_1(p, \s(p)) &= f_1((a, b, c), (a,\a b,
               -\frac{a^2}{b}+\a^{2}c)) \\
               &= \;  b(-\frac{a^2}{b}+\a^{2}c)-\a c(\a b)+a^2=0 \\
f_2(p, \s(p)) &= f_2((a, b, c), (a,\a b,
               -\frac{a^2}{b}+\a^{2}c)) \\
               &= \;  ca-\a a(-\frac{a^2}{b}+\a^{2}c)+b(\a b)=0 \\
f_3(p, \s(p)) &= f_3((a, b, c), (a,\a b,
               -\frac{a^2}{b}+\a^{2}c)) \\
               &= \;  a(\a b)-\a ba=0 
\end{align*}
so $\cP(S)=(E, \s)$.  We will show that $\sigma^i(a,b,c)=(a,\a^ib, -i\a^{2i-2}\frac{a^2}{b}+\a^{2i}c)$ by induction.  
Since 
\begin{align*}
\sigma^{i+1}(a,b,c) & =\s(a,\a^ib, -i\a^{2i-2}\frac{a^2}{b}+\a^{2i}c) \\
& =\s(a,\a(\a^ib), -\frac{a^2}{\a^ib}+\a^2(-i\a^{2i-2}\frac{a^2}{b}+\a^{2i}c)) \\
& =(a,\a^{i+1}b, -(i+1)\a^{2i}\frac{a^2}{b}+\a^{2i+2}c),
\end{align*}
the result follows.  By plugging $b=-a, -\e a, -\e^2a$, we obtain $\s^i$ in Table 1.  
\end{example}

\section{Classification of $A$} 

 In this section, we classify homogeneous coordinate algebras  $A=S/(f)$ of noncommutative conics up to isomorphism of graded algebras where $S$ are 3-dimensional Calabi-Yau quantum polynomial algebras and $0\neq f\in Z(S)_2$.  It is not easy to check which $S$ has a property that $Z(S)_2\neq 0$ by algebraic calculations especially if $S$ does not have a PBW basis (see \cite{H}), so we use a geometric method in this paper.  

\begin{lemma} \label{lem.deg1} 
Let $S$ be a quadratic algebra and $0\neq u\in Z(S)_1$.
For a variety $E$ and $\s\in \Aut E$, if $\Delta _{E, \s}\subset \overline \G_S$, then $\s|_{E\setminus \cV(u)}=\id$. 
\end{lemma}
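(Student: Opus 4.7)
The plan is to extract quadratic commutator relations from the centrality of $u$ and then to evaluate them on $\Delta_{E,\s}$.

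First I would observe that since $u \in Z(S)_1$, for every $y \in S_1$ the element $uy - yu$ vanishes in $S_2$, so it lies in the span of the defining relations $f_1, \dots, f_m$ of $S$ inside the free algebra $k\langle x_1, \dots, x_n\rangle_2$. Regarding each $f_i$ and hence $uy-yu$ as a bilinear form on $\PP^{n-1} \times \PP^{n-1}$, it follows that $uy - yu$ vanishes identically on $\overline{\G}_S = \cV(f_1, \dots, f_m)$.

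Next, I would pick any $p \in E \setminus \cV(u)$. The hypothesis $\Delta_{E, \s} \subset \overline{\G}_S$ gives $(p, \s(p)) \in \overline{\G}_S$, so the bilinear form $uy - yu$ must vanish at $(p, \s(p))$ for every $y \in V := S_1$, i.e.,
$$u(p)\, y(\s(p)) \;=\; y(p)\, u(\s(p)) \quad \text{for all } y \in V.$$
Since $u(p) \neq 0$, the linear functional $y \mapsto y(\s(p))$ on $V$ is a scalar multiple of $y \mapsto y(p)$. Letting $y$ range over a basis of $V$, this forces $\s(p)$ and $p$ to represent the same point of $\PP(V^*) = \PP^{n-1}$, giving $\s(p) = p$. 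Since $p \in E \setminus \cV(u)$ was arbitrary, we conclude $\s|_{E \setminus \cV(u)} = \id$.

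I do not anticipate a real obstacle; the argument is direct once one recognises that centrality in degree $1$ manufactures bilinear relations of the commutator shape. The only minor point to keep straight is the identification of closed points of $\PP^{n-1}$ with lines of linear functionals on $V$, which is precisely what converts ``proportionality of $y \mapsto y(p)$ and $y \mapsto y(\s(p))$ as functionals'' into the desired projective equality $\s(p)=p$.
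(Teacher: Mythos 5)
Your argument is correct and is essentially the paper's own proof, just phrased coordinate-freely: the paper normalizes $u=x_1$ and uses the relations $x_1x_i-x_ix_1\in I_2$ to get $a_1b_i=a_ib_1$, which is exactly your identity $u(p)\,y(\s(p))=y(p)\,u(\s(p))$ specialized to a basis. The only micro-point worth making explicit (as the paper does with ``$b_1\neq 0$'') is that the proportionality constant $u(\s(p))/u(p)$ is nonzero because the functional $y\mapsto y(\s(p))$ cannot vanish identically for a point of $\PP^{n-1}$.
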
 

\begin{proof} We may assume that $S=k\<x_1, \dots, x_n\>/I$ and $u=x_1\in Z(S)_1$.  Since $x_1x_i=x_ix_1$ in $S$, $x_1x_i-x_ix_1\in I_2$ for every $i=1, \dots, n$.  For $p=(a_1, \dots, a_n)\in E\setminus \cV(x_1)$ and $\s(p)=(b_1, \dots, b_n)\in E$, we have $a_1b_i=a_ib_1$ for every $i=1, \dots, n$.  Since $a_1\neq 0$, $\s(p)=((b_1/a_1)a_1, (b_1/a_1)a_2, \dots, (b_1/a_1)a_n)$, so $b_1\neq 0$ and $\s(p)=p\in E\setminus \cV(x_1)$.  
\end{proof}  

Let $X, Y$ be irreducible varieties and $\s\in \Aut X$.  
If $X\not \subset Y$, then $X\setminus Y$ is a non-empty open subset in $X$, so $\s|_{X\setminus Y}=\id$ implies $\s=\id$ (\cite[Lemma 4.1, Chapter 1]{Ha}). 

It is now easy to see which $S$ has a property that $Z(S)_1\neq 0$.  

\begin{lemma} Let $S=\cA^+(E, \s)$ be a 3-dimensional Calabi-Yau quantum polynomial algebra.  Then $Z(S)_1\neq 0$ if and only if $\bar \s=\id$ if and only if $S$ is isomorphic to either
\begin{enumerate}
\item{} $S=k[x, y, z]$, and, in this case, $Z(S)_1=S_1$ (Type P), or 
\item{} $S=k\<x, y, z\>/(yz-zy+x^2, zx-xz, xy-yx)$, and, in this case, $Z(S)_1=kx$ (Type TL).
\end{enumerate}
\end{lemma}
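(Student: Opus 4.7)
The plan is to prove the chain (3)$\Rightarrow$(1)$\Rightarrow$(2)$\Rightarrow$(3). The implication (3)$\Rightarrow$(1) is immediate: in case (1), $k[x,y,z]$ is commutative, so $Z(S)_1 = S_1$; in case (2), the relations $xy - yx = 0$ and $zx - xz = 0$ show that $x$ commutes with both $y$ and $z$, so $x \in Z(S)_1$, and a short bracket computation using $[y,z] = -x^2$ shows $Z(S)_1 = kx$. The implication (2)$\Rightarrow$(3) is a scan of Table $1$: in each row the displayed formula for $\bar\sigma^i$ either contains a nontrivial scalar factor such as $\alpha$ with $\alpha\neq 1$, a nonzero additive term such as $ia$ or $-b$, or the cyclic permutation $\theta$; the identity arises exactly in Type P with $\alpha = 1$ (yielding $k[x,y,z]$) and in Type TL with $\alpha = 1$ (yielding the stated noncommutative algebra).

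The substantive step is (1)$\Rightarrow$(2). Fix $0 \neq u \in Z(S)_1 \subset S_1$, so $\cV(u) \subset \PP^2$ is a line. By Remark \ref{rem.g1g2}, $S$ satisfies (G1) with $\cP(S) = (\overline E, \bar\sigma)$, hence $\Delta_{\overline E, \bar\sigma} = \overline{\G_S}$, and Lemma \ref{lem.deg1} yields $\bar\sigma|_{\overline E \setminus \cV(u)} = \id$. Since the fixed locus of $\bar\sigma$ is closed in $\overline E$, the non-fixed locus is open and contained in $\overline E \cap \cV(u)$; any irreducible component $C$ of $\overline E$ on which $\bar\sigma$ is not pointwise the identity therefore meets this open set in a dense subset and hence satisfies $C \subset \cV(u)$, forcing $C$ to be a line.

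A case analysis over Table $1$ now completes (1)$\Rightarrow$(2). In Types EC, NC, CC the variety $\overline E$ is an irreducible cubic with no line component, so $\bar\sigma$ must be $\id$, contradicting the explicit nontrivial formulas in Table $1$. In Types S$'$ and T$'$ the irreducible conic component of $\overline E$ is not a line, so $\bar\sigma$ must restrict to the identity on it, again contradicting Table $1$. In Types S$_1$, S$_3$, T the displayed action of $\bar\sigma$ (either scaling by $\alpha \neq 1$ on each component or cycling the three lines via $\theta$) is nontrivial on more than one line, so the non-identity locus cannot fit inside a single line $\cV(u)$. For Type WL only the component $\cV(x)$ carries a nontrivial $\bar\sigma$-action, forcing $u$ to be a scalar multiple of $x$; but a direct commutator from the WL relations gives $[x,z] = 2xy \neq 0$, so $x \notin Z(S)_1$, a contradiction. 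The only surviving types are P and TL, where forcing $\bar\sigma = \id$ on $\overline E$ immediately gives $\alpha = 1$ in each, matching (1) and (2).

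The main obstacle will be this case analysis, especially in the line-only types where $\overline E$ may genuinely contain a line component equal to $\cV(u)$, so the geometric constraint alone does not rule out a central element. In these borderline cases one must either use the specific formulas in Table $1$ to see that the non-fixed locus necessarily spills onto multiple components, or supplement the geometry with a direct commutator calculation in $S$ to show that the only candidate linear form is not in fact central. This interplay between the geometric input from Lemma \ref{lem.deg1} and algebraic bracket computations is what makes the argument slightly delicate.
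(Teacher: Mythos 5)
Your proposal is correct and follows essentially the same route as the paper: the key input is Lemma \ref{lem.deg1}, which forces $\bar\sigma$ to be the identity on a dense open subset of any irreducible component of $\overline E$ not contained in the line $\cV(u)$, followed by a type-by-type check against Table 1 and supplementary direct commutator calculations for the non-reduced Types TL and WL. The only point to tighten is Type TL, where $\overline E$ is itself a single line that may coincide with $\cV(u)$, so (as you note in your final paragraph) the conclusion $\alpha=1$ there must come from the relation $zx=\alpha xz$ rather than from the geometric argument.
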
  

\begin{proof} If $E$ is reduced, then, for every $0\neq u\in Z(S)_1$, $E\not \subset \cV(u)$, so there exists an irreducible component $E_0$ of $E$ such that $E_0\not\subset \cV(u)$.  Since $\s|_{E\setminus \cV(u)}=\id$ by Lemma \ref{lem.deg1}, $\s|_{E_0\setminus \cV(u)}=\id$, so $\s|_{E_0}=\id$.  By Lemma \ref{lem.SP}, we can check that $\s=\id$,
and $S\cong k[x, y, z]$ (Type P).  In this case, $Z(S)_1=S_1$.

If $S$ is of Type TL, then we can show that $Z(S)_1\neq 0$ if and only if $S\cong k\<x, y, z\>/(yz-zy+x^2, zx-xz, xy-yx)$, and, in this case, we can show that $Z(S)_1=kx$ by direct calculations. 

If $S$ is of Type WL, then we can show that $Z(S)_1=0$ by direct calculations.
\end{proof} 

To check which $S$ has a property that $Z(S)_2\neq 0$, we need more work.    

\begin{lemma} \label{lem.2Ve} 
If $S=T(V)/(R)$ is a quadratic algebra, 
$$W=R\otimes V^{\otimes 2}+V\otimes R\otimes V+V^{\otimes 2}\otimes R\subset V^{\otimes 4},$$ 
$U=V\otimes V$, $\overline U=U/\widetilde R$, and $\pi :U\to \overline U$ is a natural surjection, 
then we have the following commutative diagram 
$$\begin{CD}
\overline \G_{S^{(2)}} @>>> \PP(\overline U^*)\times \PP(\overline U^*) \\
@V\cong VV @VV{\pi^*\times \pi^*}V \\
\cV(\widetilde W) @>>> \PP(U^*)\times \PP(U^*).  
\end{CD}$$ 
\end{lemma}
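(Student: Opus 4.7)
My plan is to identify $\cV(\widetilde W)$ with $\Gamma_{S^{(2)}}$ by writing $S^{(2)}$ explicitly as a quadratic algebra on the generating space $\overline U = S_2$ and then tracing both sides through the closed embedding $\pi^* \times \pi^*$. First I would write down the quadratic presentation of $S^{(2)}$. The degree-$2$ part of $S^{(2)}$ is $S_4 = V^{\otimes 4}/W$, while under the canonical identification $V^{\otimes 4} = U \otimes U$ the space $\overline U \otimes \overline U$ sits in a short exact sequence
$$0 \to (\widetilde R \otimes U) + (U \otimes \widetilde R) \to U \otimes U \xrightarrow{\pi \otimes \pi} \overline U \otimes \overline U \to 0$$
(this kernel is immediate from a split basis of $U$ adapted to $\widetilde R$). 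Chasing this through the multiplication $\overline U \otimes \overline U \twoheadrightarrow S_4$ gives
$$R_{S^{(2)}} := \ker\bigl(\overline U \otimes \overline U \to S_4\bigr) = W\bigl/\bigl((\widetilde R \otimes U) + (U \otimes \widetilde R)\bigr),$$
so $S^{(2)} = T(\overline U)/(R_{S^{(2)}})$. The identity $\Gamma_T = \cV(R_T) \subset \PP(V^*) \times \PP(V^*)$ for any quadratic algebra $T = T(V)/(R_T)$, which is a direct consequence of Lemma \ref{lem.tilw}, then yields $\Gamma_{S^{(2)}} = \cV(R_{S^{(2)}}) \subset \PP(\overline U^*) \times \PP(\overline U^*)$.

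Next I would identify the image of $\pi^* \times \pi^*$. Since $\pi^* : \overline U^* \hookrightarrow U^*$ is the inclusion of the annihilator of $\widetilde R$, we have $\pi^*(\PP(\overline U^*)) = \cV(\widetilde R) \subset \PP(U^*)$. A direct evaluation on simple tensors $r \otimes u$ with $r \in \widetilde R$, $u \in U$ shows $\cV(\widetilde R \otimes U) = \cV(\widetilde R) \times \PP(U^*)$ and, symmetrically, $\cV(U \otimes \widetilde R) = \PP(U^*) \times \cV(\widetilde R)$, so
$$\operatorname{Im}(\pi^* \times \pi^*) = \cV\bigl((\widetilde R \otimes U) + (U \otimes \widetilde R)\bigr) \subset \PP(U^*) \times \PP(U^*).$$
Because $\widetilde W$ contains both of the outer summands of $W$, namely $R \otimes V^{\otimes 2} = \widetilde R \otimes U$ and $V^{\otimes 2} \otimes R = U \otimes \widetilde R$ under $V^{\otimes 4} = U \otimes U$, I conclude $\cV(\widetilde W) \subset \operatorname{Im}(\pi^* \times \pi^*)$.

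Finally, restricting a bilinear form $g \in U \otimes U$ to the subvariety $\PP(\overline U^*) \times \PP(\overline U^*)$ is precisely the operation of reducing $g$ modulo $(\widetilde R \otimes U) + (U \otimes \widetilde R)$, i.e.\ of applying $\pi \otimes \pi$. Hence the image of $\widetilde W$ in $\overline U \otimes \overline U$ is exactly $R_{S^{(2)}}$, and inside $\PP(\overline U^*) \times \PP(\overline U^*)$ the locus $\cV(\widetilde W)$ coincides with $\cV(R_{S^{(2)}}) = \Gamma_{S^{(2)}}$. Passing to reduced structures yields the claimed isomorphism together with commutativity of the diagram. The only substantive hurdle is notational bookkeeping: one has to carry the tilde faithfully through the canonical identification $V^{\otimes 4} = U \otimes U$ so that the three summands of $W$ map to the correct subspaces of $U \otimes U$, and then verify that precisely the two outer summands fill up the kernel of $\pi \otimes \pi$ while the middle summand $V \otimes R \otimes V$ survives to produce $R_{S^{(2)}}$.
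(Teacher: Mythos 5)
Your proof is correct and follows essentially the same route as the paper: both arguments compute $\ker(\pi\otimes\pi)=\widetilde R\otimes U+U\otimes \widetilde R\subset \widetilde W$, identify the degree-two relation space of $S^{(2)}$ with the image $(\pi\otimes\pi)(\widetilde W)=W/\ker(\pi\otimes\pi)$, and match the two vanishing loci through the closed embedding $\pi^*\times\pi^*$ (your explicit check that $\cV(\widetilde W)\subset \Im(\pi^*\times\pi^*)$ is a welcome detail the paper leaves implicit). The one step you assert without justification is that $S^{(2)}$ is in fact a quadratic algebra, which is needed to write $S^{(2)}=T(\overline U)/(R_{S^{(2)}})$ and hence to make sense of $\overline \G_{S^{(2)}}$; the paper obtains this from \cite[Proposition 2.2, Chapter 3]{PolPos}.
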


\begin{proof} Since $\Ker (\pi\otimes \pi)=\widetilde R\otimes U+U\otimes \widetilde R\subset \widetilde W$, we see that $\pi\otimes \pi:U\otimes U\to \overline U\otimes \overline U$ induces an isomorphism $(U\otimes U)/\widetilde W\to (\overline U\otimes \overline U)/\overline W$ where $\overline W:=(\pi\otimes \pi)(\widetilde W)$.  
Since 
\begin{align*}
(\pi^*\times \pi^*)(\cV(\overline W)) & =\{(\pi^*\times \pi^*)(p, q))\mid (p, q)\in \cV(\overline W)\} \\
& =\{(\pi^*(p), \pi^*(q))\mid ((\pi\otimes \pi)(\tilde w))(p, q)=0 \; \forall \tilde w\in \widetilde W\} \\
& =\{(\pi^*(p), \pi^*(q))\mid \tilde w((\pi^*(p), \pi^*(q))=0 \; \forall \tilde w\in \widetilde W\} \\
& \subset \cV(\widetilde W),
\end{align*} 
we have a commutative diagram 
$$\begin{CD}
\cV(\overline W) @>>> \PP(\overline U^*)\times \PP(\overline U^*) \\
@V\cong VV @VV{\pi^*\times \pi^*}V \\
\cV(\widetilde W) @>>> \PP(U^*)\times \PP(U^*).  
\end{CD}$$ 
Since $S^{(2)}$ is a quadratic algebra by \cite[Proposition 2.2, Chapter 3]{PolPos} such that $S^{(2)}_1=\overline U, S^{(2)}_2=(\overline U\otimes \overline U)/\overline W$, we see that $S^{(2)}=T(\overline U)/(\overline W)$, so $\overline \G_{S^{(2)}}=\cV(\overline W)$, hence the result. 
\end{proof}

\begin{proposition} \label{prop.ve2} 
If $S=T(V)/(R)$ is a quadratic algebra satisfying (G1) with $\cP(S)=(E, \s)$ and $\pi:U:=V\otimes V\to U/\widetilde R$ is a natural surjection, then $\Delta _{E^{(2)}, \s^{(2)}}\subset \overline \G_{S^{(2)}}$ where  
$\widetilde E:=\Sigma(\overline \G_S)$, $E^{(2)}:=(\pi^*)^{-1}(\widetilde E)$, and $\tilde \s\in \Aut \widetilde E, \s^{(2)}\in \Aut E^{(2)}$ are given by the commutative diagram
$$\begin{CD}
\overline \G_S @>\s^2\times \s^2>\cong > \overline \G_S \\
@V\Sigma V\cong V @V\cong V\Sigma V \\
\widetilde E @>\tilde \s>\cong > \widetilde E \\
@A\pi^*A\cong A @A\cong A\pi^* A \\
E^{(2)} @>\s^{(2)}>\cong > E^{(2)}
\end{CD}$$
\end{proposition}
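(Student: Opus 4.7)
The plan is to unravel the diagram defining $\s^{(2)}$, transport the containment $\Delta_{E^{(2)},\s^{(2)}} \subset \overline\G_{S^{(2)}}$ across the isomorphism of Lemma \ref{lem.2Ve}, and then verify the resulting statement by a direct multilinear calculation on $W = R\otimes V^{\otimes 2} + V\otimes R\otimes V + V^{\otimes 2}\otimes R$ using the fact that $R$ vanishes on $\Delta_{E,\s} = \overline\G_S$.

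First I would fix an arbitrary $p \in E^{(2)}$ and try to show $(p, \s^{(2)}(p)) \in \overline\G_{S^{(2)}}$. By Lemma \ref{lem.2Ve} and its commutative square, this is equivalent to showing that the image under $\pi^*\times\pi^*$, namely $(\pi^*(p), \pi^*(\s^{(2)}(p)))$, lies in $\cV(\widetilde W)$. Because $\pi^*(p) \in \widetilde E = \Sigma(\overline\G_S) = \Sigma(\Delta_{E,\s})$, I can write $\pi^*(p) = \Sigma(q, \s(q))$ for some $q \in E$, and the defining commutative diagram forces
\[
\pi^*(\s^{(2)}(p)) \;=\; \tilde\s(\pi^*(p)) \;=\; \Sigma(\s^2(q), \s^3(q)).
\]
So the goal reduces to checking $(\Sigma(q,\s(q)),\,\Sigma(\s^2(q),\s^3(q))) \in \cV(\widetilde W)$.

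Next I would invoke Lemma \ref{lem.tilw}, which says $\cV(\widetilde W)\cap \Im(\Sigma^{\times 2}) = \Sigma^{\times 2}(\cV(W))$, to reduce the check further to showing $(q, \s(q), \s^2(q), \s^3(q)) \in \cV(W) \subset \PP(V^*)^{\times 4}$. This is where the three summands of $W$ come in: a rank-one tensor $r\otimes v_1\otimes v_2 \in R\otimes V^{\otimes 2}$ evaluated at the point $(q,\s(q),\s^2(q),\s^3(q))$ factors as $r(q,\s(q))\cdot v_1(\s^2(q))\cdot v_2(\s^3(q))$, which vanishes because $(q,\s(q)) \in \Delta_{E,\s} = \overline\G_S = \cV(R)$; the summand $V\otimes R\otimes V$ vanishes because $(\s(q),\s^2(q)) \in \Delta_{E,\s}$; and $V^{\otimes 2}\otimes R$ vanishes because $(\s^2(q),\s^3(q)) \in \Delta_{E,\s}$. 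Extending linearly over $W$ gives the claim. Transporting back along Lemma \ref{lem.2Ve} then places $(p,\s^{(2)}(p))$ in $\overline\G_{S^{(2)}}$, as desired.

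The main potential obstacle is bookkeeping rather than ingenuity: one has to be careful that the maps $\tilde\s$ and $\s^{(2)}$ are well-defined (that is, that $\s^2\times\s^2$ really preserves $\overline\G_S$, that $\Sigma$ restricts to an isomorphism onto $\widetilde E$, and that $\pi^*$ identifies $E^{(2)}$ with $\widetilde E$), and that the evaluation of a tensor on a tuple of points in $\PP(V^*)$ is interpreted consistently (up to scalars, which is harmless for a vanishing statement). Once the diagram is read correctly, the heart of the argument is the three-line computation that each summand of $W$ annihilates the quadruple $(q,\s(q),\s^2(q),\s^3(q))$, which is essentially forced by the hypothesis (G1).
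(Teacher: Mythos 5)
Your proposal is correct and takes essentially the same route as the paper: both reduce, via Lemma \ref{lem.2Ve} and Lemma \ref{lem.tilw}, to the fact that the quadruples $(q,\s(q),\s^2(q),\s^3(q))$ lie in $\cV(W)$, which follows from $\overline\G_S=\Delta_{E,\s}=\cV(R)$. The only difference is that you argue pointwise and spell out the three-summand vanishing computation that the paper states implicitly as its description of $\cV(W)$.
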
 

\begin{proof} 
Since 
$$\overline \G_S:=\cV(R)=\Delta_{E, \s}:=\{(p, \s(p))\subset \PP(V^*)\times \PP(V^*)\mid p\in E\},$$
$\widetilde E:=\Sigma(\overline \G_S)=\{\Sigma(p, \s(p))\in \PP(U^*)\mid p\in E\}$.    
Since 
$$\cV(W)=\{(p, \s(p), \s^2(p), \s^3(p))\in \PP(V^*)\times \PP(V^*)\times \PP(V^*)\times \PP(V^*)\mid p\in E\},$$
\begin{align*}
\cV(\widetilde W) & \supset (\Sigma\times \Sigma)(\cV(W)) \\
& =(\Sigma\times \Sigma)(\{(p, \s(p), \s^2(p), \s^3(p))\in \PP(V^*)^{\times 4}
\mid p\in E\} \\
& =\{(\Sigma (p, \s(p)), \Sigma (\s^2\times \s^2)(p, \s(p)))\in \PP(U^*)^{\times 2}
\mid p\in E\} \\
& =\{(\Sigma (p, \s(p)), \tilde \s\Sigma (p, \s(p))\in \PP(U^*)^{\times 2}
\mid p\in E\} \\
& =:\Delta _{\widetilde E, \tilde \s}
\end{align*}
by Lemma \ref{lem.tilw}.  By Lemma \ref{lem.2Ve},  
\begin{align*}
\Delta _{E^{(2)}, \s^{(2)}} & :=\{(p, \s^{(2)}(p))\mid p\in E^{(2)}\} =\{((\pi^*)^{-1}(q), \s^{(2)}(\pi^*)^{-1}(q))\mid q\in \widetilde E\} \\
& =\{((\pi^*)^{-1}(q), (\pi^*)^{-1}\tilde \s(q))\mid q\in \widetilde E\} =(\pi^*\times \pi^*)^{-1}(\Delta _{\widetilde E, \tilde \s}) \\
& \subset (\pi^*\times \pi^*)^{-1}(\cV(\widetilde W))=\overline \G_{S^{(2)}}.
\end{align*}  
\end{proof} 

\begin{lemma} \label{lem.q} 
Let $S$ be a quadratic algebra satisfying (G1) with $\cP(S)=(E, \s)$.  
If $S$ also satisfies (G2), then $\widetilde E\not \subset  \cV(\tilde f)$ for every $0\neq f\in S_2$.
\end{lemma}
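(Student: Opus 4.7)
The plan is to argue by contradiction. Suppose that $0\neq f\in S_2$ satisfies $\widetilde E\subset \cV(\tilde f)$. Fix a lift $\hat f\in k\<x_1,\dots,x_n\>_2$ of $f$, so that $\tilde f=\tilde{\hat f}$. I first want to transport the hypothesis from $\PP(U^*)$ back to $\PP(V^*)\times \PP(V^*)$ via the Segre embedding, and then invoke (G2) to conclude that $\hat f$ lies in the defining ideal of $S$, contradicting $f\neq 0$ in $S_2$.

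The first step is to use $\widetilde E=\Sigma(\overline \G_S)\subset \Im \Sigma$ to rewrite the inclusion as
\[
\widetilde E\subset \cV(\tilde{\hat f})\cap \Im \Sigma.
\]
Applying Lemma \ref{lem.tilw} with $W=k\hat f\subset V^{\otimes 2}$ gives $\cV(\tilde{\hat f})\cap \Im \Sigma=\Sigma(\cV(\hat f))$. Since $\Sigma$ is a closed embedding, I can pull back the inclusion $\Sigma(\overline \G_S)\subset \Sigma(\cV(\hat f))$ to obtain $\overline \G_S\subset \cV(\hat f)$, i.e.\ $\hat f|_{\overline \G}=0$.

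Now (G2) says that the defining relations of $S$ are exactly the elements of $k\<x_1,\dots,x_n\>_2$ that vanish on $\overline \G$. Therefore $\hat f$ is a relation, which means $f=0$ in $S_2$, contradicting the assumption. The only subtlety worth flagging is that the condition ``$\widetilde E\subset \cV(\tilde f)$'' is a priori written for $f\in S_2$ rather than for a specific lift, but different lifts of $f$ differ by an element of the degree-$2$ part of the defining ideal, which under (G2) vanishes on $\overline \G$, hence under $\Sigma$ vanishes on $\widetilde E$; so the condition is independent of the lift and the argument above is unambiguous. There is no real obstacle here: the proof is essentially an unwinding of the definitions of (G2) and of $\widetilde E$ together with a single application of Lemma \ref{lem.tilw}.
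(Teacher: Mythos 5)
Your proof is correct and follows essentially the same route as the paper's: intersect with $\Im\Sigma$, apply Lemma \ref{lem.tilw}, use injectivity of the Segre embedding to get $\overline\G_S\subset\cV(f)$, and conclude by (G2). The extra care you take about the choice of lift $\hat f$ is a point the paper glosses over, but it does not change the argument.
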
  

\begin{proof} If $\widetilde E\subset  \cV(\tilde f)$, then
$$\Sigma (\overline \G_S)=:\widetilde E=\widetilde E\cap \Im \Sigma\subset \cV(\tilde f)\cap \Im \Sigma=\Sigma (\cV(f))$$
by Lemma \ref{lem.tilw}.  Since $\Sigma:\PP^{n-1}\times \PP^{n-1}\to \PP^{n^2-1}$ is injective, $\overline \G_S\subset \cV(f)$, so $f(p, \s(p))=0$ for every $p\in E$.  Since $S$ satisfies (G2), $f=0$ in $S$, which is a contradiction. 
\end{proof} 

We have a rather short list of 3-dimensional Calabi-Yau quantum polynomial algebras $S$ such that $Z(S)_2\neq 0$.

\begin{theorem} \label{thm.cla} 
Let $S=\cA^+(E, \s)=k\<x, y, z\>/(f_1, f_2, f_3)$ be a 3-dimensional Calabi-Yau quantum polynomial algebra.  Then $Z(S)_2\neq 0$ if and only if $\bar \s^2=\id$ if and only if $S$ is isomorphic to one of the algebras in Table 2 below.  For each algebra, $Z(S)_2$ is also given in Table 2. 
\begin{center}{\rm Table $2$}\end{center}
$$\begin{array}{|c|c|c|c|}
\hline
f_1, f_2, f_3 & Z(S)_2 & |\bar \s| & \textnormal{Type} \\
\hline
yz-zy, zx-xz, xy-yx & S_2 & 1 & P \\
\hline 
yz-zy+x^2, zx-xz, xy-yx & kx^2 & 1 & TL \\
\hline
yz+zy, zx+xz, xy+yx & kx^2+ky^2+kz^2 & 2 & S \\
\hline
yz+zy+x^2, zx+xz, xy+yx & kx^2+ky^2+kz^2 & 2 & S' \\
\hline
yz+zy+x^2, zx+xz+y^2, xy+yx & kx^2+ky^2+kz^2 & 2 & NC \\
\hline
yz+zy+\l x^2, zx+xz+\l y^2, xy+yx+\l z^2 & kx^2+ky^2+kz^2 & 2 & EC \\
\hline
\end{array}$$
where $\l\in k$ such that $\l^3\neq 0, 1, -8$. 
\end{theorem}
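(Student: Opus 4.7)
The plan is to establish the chain $Z(S)_2 \neq 0 \Rightarrow \bar\sigma^2 = \id \Rightarrow S$ appears in Table~$2$, and then verify by direct computation that for each $S$ in the list the claimed $Z(S)_2$ is indeed nonzero. This closes the loop and simultaneously yields the explicit description of $Z(S)_2$ in the last column.

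For the first implication, given $0 \neq f \in Z(S)_2$, I view $f$ as a nonzero central degree-$1$ element of the $2$-Veronese $S^{(2)}$, which is quadratic by Lemma \ref{lem.2Ve}. Proposition \ref{prop.ve2} gives $\Delta_{E^{(2)},\sigma^{(2)}} \subset \overline{\G}_{S^{(2)}}$, and Lemma \ref{lem.deg1} applied to the central element $f$ forces $\sigma^{(2)}|_{E^{(2)} \setminus \cV(f)} = \id$. Transporting across the commutative diagram of Proposition \ref{prop.ve2} and using the Segre identification $\overline{\G}_S \cong \widetilde{E}$, this translates into $\sigma^2 = \id$ on $E$ away from a proper closed subset. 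When $S$ is not of type TL or WL, $S$ satisfies (G2) by Remark \ref{rem.g2g1}, so Lemma \ref{lem.q} guarantees $\widetilde{E} \not\subset \cV(\tilde{f})$; consequently no irreducible component of $E$ lies in the bad locus, and $\bar\sigma^2 = \id$ on every component (a permutation of components by $\sigma^2$ is incompatible with $\sigma^2 = \id$ on a dense open subset of each component). The residual TL and WL cases are handled by direct inspection of Table~$1$.

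For the second implication I run case-by-case through the ten types of Lemma \ref{lem.SP}, reading $\bar\sigma^2$ from the formulas in Table~$1$. In types P, S$_1$, S$'$, NC the condition $\bar\sigma^2 = \id$ reduces to $\alpha^2 = 1$ together with the type-specific constraint on $\alpha^3$, yielding either $\alpha = 1$ (type P, $S = k[x,y,z]$) or $\alpha = -1$ (the S, S$'$, NC entries of Table~$2$). Types T, T$'$, CC, S$_3$, WL contain an additive translation, a $\theta$-rotation, or a cusp term in $\bar\sigma^2$ that cannot vanish in characteristic $0$, so $\bar\sigma^2 \neq \id$ throughout. Type TL reduces to $\alpha = 1$, giving the TL entry. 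For type EC with $\sigma = \sigma_p$ (translation by $p = (\alpha,\beta,\gamma)$ on the Hesse cubic $E$), $\bar\sigma^2 = \id$ is equivalent to $2p = O$; a computation on the Hesse pencil shows the $2$-torsion condition forces $\alpha = \beta$ after normalization, which, after rescaling to $\alpha = \beta = 1$ and $\gamma = \lambda$, produces exactly the $\lambda$-family in Table~$2$, with the non-degeneracy range $\lambda^3 \neq 0, 1, -8$ coming from the requirement that $S$ remain a $3$-dimensional Calabi-Yau quantum polynomial algebra.

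For the third step (computing $Z(S)_2$), each noncommutative entry of Table~$2$ has relations of the form $yz + zy + c_1 x^2$, $zx + xz + c_2 y^2$, $xy + yx + c_3 z^2$, from which a one-line check shows $x^2, y^2, z^2$ commute with $x, y, z$, so $kx^2 + ky^2 + kz^2 \subseteq Z(S)_2$; a dimension count in $S_2$ against the PBW-type basis rules out any further central quadratic form, giving equality. The P and TL entries are immediate. The step I expect to be the main obstacle is the EC case of the second implication: pinning down the $2$-torsion condition on $p \in E_\lambda$ concretely and checking that the corresponding algebras, after normalization, exhaust precisely the stated one-parameter family modulo isomorphism, together with correctly identifying the forbidden values $\lambda^3 \in \{0, 1, -8\}$.
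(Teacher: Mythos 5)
Your overall strategy is the same as the paper's: pass to the $2$-Veronese, apply Lemma \ref{lem.deg1} to $\bar f\in Z(S^{(2)})_1$ together with Proposition \ref{prop.ve2}, use (G2) and Lemma \ref{lem.q} to find a component on which $\tilde\s$ is the identity, and then run through Table~1 case by case. However, there is one genuinely false step in your first implication: from $\widetilde E\not\subset\cV(\tilde f)$ you conclude that \emph{no} irreducible component of $E$ lies in the bad locus. Lemma \ref{lem.q} only guarantees that $\widetilde E$ as a whole is not contained in $\cV(\tilde f)$, i.e.\ that \emph{at least one} component escapes; individual components can perfectly well be swallowed. Concretely, for $S=k_{-1}[x,y,z]$ (Type S) and $f=x^2$, the component of $\overline\G_S$ lying over $\cV(x)$ consists of points $(p,\s(p))$ with first coordinate of $p$ equal to $0$, so its image under $\Sigma$ is entirely contained in $\cV(\tilde f)=\cV(x_{11})$. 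Hence your parenthetical argument that $\bar\s^2=\id$ on every component because it is $\id$ on a dense open subset of each component does not get off the ground.

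The repair is exactly what the paper does: you only obtain $(\s|_{E_0})^2=\id$ for \emph{some} irreducible component $E_0$, and you must then check from the explicit formulas in Table~1 that, for each type, $\s^2=\id$ on a single component already forces $\bar\s^2=\id$ globally (e.g.\ for Types S$_1$, S$'$, NC it pins down $\a^2=1$ on whichever component you land on; for S$_3$ the $\theta$-twist permutes the components so no component can be preserved by $\s^2$). Your own case-by-case analysis in the second implication would in fact go through verbatim under this weaker hypothesis, so the gap is local and repairable, but as written the inference is wrong. Two smaller points: the TL and WL cases cannot be read off from Table~1 alone --- one needs actual computations in those algebras to decide whether $Z(S)_2$ vanishes (the paper also resorts to direct calculation here); and the centrality of $x^2,y^2,z^2$ in the Type EC entry is not a one-line check --- one gets identities such as $z^2x-xz^2=\l(y^2z-zy^2)$ which must be chained and closed using $\l^3\neq 1$ (the paper cites \cite[Lemma 3.6]{H} for this). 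Your identification of the $2$-torsion condition and of the excluded values $\l^3\in\{0,1,-8\}$ in the EC case is correct and matches the paper.
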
  

\begin{proof} We use the notations in Proposition \ref{prop.ve2}.  
If $E$ is reduced (so that $\overline \G=\G$ is reduced), then $S$ satisfies (G1) and (G2) with $\cP(S)=(E, \s)$ by Remark \ref{rem.g2g1}. 
If $0\neq f\in Z(S)_2$, then $0\neq \bar f\in Z(S^{(2)})_1$, so ${\s^{(2)}}|_{E^{(2)}\setminus \cV(\bar f)}=\id$ by Proposition \ref{prop.ve2} and Lemma \ref{lem.deg1}.  Since $\tilde f(\pi^*(p))=(\pi (\tilde f))(p)=\bar f(p)$
for $p\in \PP(\overline U)$,  
$\pi^*(E^{(2)}\setminus \cV(\bar f))=\widetilde E\setminus \cV(\tilde f)$, so $\tilde \s|_{\widetilde E\setminus \cV(\tilde f)}=\id$.
 
Since $\Sigma$ restricts to an isomorphism $\G\to \widetilde E$ (as varieties), there exists an irreducible component $\G_0$ of $\G$ such that $\widetilde E_0:=\Sigma(\G_0)\not \subset  \cV(\tilde f)$ by Lemma \ref{lem.q}.  Since projections $p_1, p_2:\PP^2\times \PP^2\to \PP^2$ restrict to isomorphisms $\G\to E$ (as varieties), $E_0:=p_1(\G_0)$ is an irreducible component of $E$ such that $\G_0=\{(p, \s(p))\mid p\in E_0\}$.

Since ${\tilde \s}|_{\widetilde E_0\setminus \cV(\tilde f)}=\id$, we have ${\tilde \s}|_{\widetilde E_0}=\id$, so 
$(\s|_{E_0})^2=\id$ by the commutative diagram
$$\begin{CD}
\G_0 @>(\s|_{E_0})^2\times (\s|_{\s(E_0)})^2>\cong > \G_0 \\
@V\Sigma V\cong V @V\cong V\Sigma V \\
\widetilde E_0 @>\tilde \s|_{\widetilde E_0}>\cong > \widetilde E_0.
\end{CD}$$ 

By Lemma \ref{lem.SP}, we can check that $(\s|_{E_0})^2=\id$ for some irreducible component $E_0$ of $E$ if and only if $\s^2=\id$ if and only if $S$ is isomorphic to one of the algebras of Type P, S, S', NC, EC in Table 2 above.    
On the other hand, if $S=k[x, y, z]$ (Type P), then clearly, $Z(S)_2=S_2\neq 0$, and if $S$ is one of the algebras of Type S, S', NC, EC in Table 2 above, then $Z(S)_2=kx^2+ky^2+kz^2\neq 0$ by \cite [Lemma 3.6]{H}.  

If $S$ is of Type TL, then we can show that $Z(S)_2\neq 0$ if and only if $S\cong k\<x, y, z\>/(yz-zy+x^2, zx-xz, xy-yx)$, and, in this case, we can show that $Z(S)_2=kx^2$ by direct calculations and $\bar \s=\id$ by Lemma \ref{lem.SP}.  

If $S$ is of Type WL, then we can show that $Z(S)_2=0$ by direct calculations and $|\bar \s|=\infty$ by Lemma \ref{lem.SP}.
\end{proof} 

\begin{example} We explain our geometric method by using a 2-dimensional quantum polynomial algebra
$S=k\<u, v\>/(vu-\l uv)=\cA^+(\PP^1, \s)$
where $\s(a, b)=(\l a, b)$.  It is easy to see that $Z(A)_2\neq 0$ if and only if $\l^2=2$ if and only if $\s^2=\id$.  By calculations, we can show that $\cP(S^{(2)})=(E^{(2)}, \s^{(2)})$ where 
\begin{align*}
& \widetilde E=\Sigma(\overline \G_S)=\Sigma (\{((a, b), (\l a, b))\mid (a, b)\in \PP^1\}) \\
& \hspace{.75in} =\{(\l a^2, ab, \l ab, b^2)\mid (a, b)\in \PP^1\}\cong \PP^1, \\ 
& \tilde \s(\a, \b, \c, \d)=(\l^4\a, \l^2\b, \l^2 \c, \d), \textnormal { and } \\
& E^{(2)}=(\pi^*)^{-1}(\widetilde E)=\{(\l a^2, ab, b^2)\mid (a, b)\in \PP^1\}\cong \PP^1, \\ 
& \s^{(2)}(\a, \b, \c)=(\l^4\a, \l^2\b, \c).
\end{align*}
If $0\neq f\in Z(S)_2$, then $0\neq \bar f\in Z(S^{(2)})_1$, so $\s^{(2)}|_{E^{(2)}\setminus \cV(\bar f)}=\id$.  Since $E^{(2)}\not \subset \cV(\bar f)$, 
we have $\s^{(2)}=\id$, so 
$\l^2=1$.  
\end{example} 

We write  
$$k_{-1}[x, y, z]:=k\<x, y, z\>/(yz+zy, zx+xz, xy+yx)=:S^{(0, 0, 0)}.$$
The following corollary describes all the possible homogeneous coordinate algebras of noncommutative conics. 

\begin{corollary} \label{cor.cgc} 
If $S=\cA^+(E, \s)$ is a 3-dimensional Calabi-Yau quantum polynomial algebra, and
$0\neq f\in Z(S)_2$, then $A=S/(f)$ is isomorphic to either 
\begin{enumerate}
\item{} $k[x, y, z]/(x^2), k[x, y, z]/(x^2+y^2), k[x, y, z]/(x^2+y^2+z^2)$, and, in this case, $|\s|\neq 2$ and $A^!\cong k_{-1}[x, y, z]/(G_1, G_2)$ for some $G_1, G_2\in k_{-1}[x, y, z]_2$ ($A^!$ is graded commutative), or 
\item{} $S^{(\a, \b, \c)}/(ax^2+by^2+cz^2)$
for some $\a, \b, \c\in k$ and $(a, b, c)\in \PP^2$ where 
$$S^{(\a, \b, \c)}:=k\<x, y, z\>/(yz+zy+\a x^2, zx+xz+\b y^2, xy+yx+\c z^2),$$ 
and, in this case, $|\s|=2$ and $A^!\cong k[x, y, z]/(G_1, G_2)$ for some $G_1, G_2\in k[x, y, z]_2$ ($A^!$ is commutative). 
\end{enumerate} 
\end{corollary}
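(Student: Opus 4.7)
The plan is to apply Theorem \ref{thm.cla} to obtain the finite list of $3$-dimensional Calabi-Yau quantum polynomial algebras $S$ with $Z(S)_2\neq 0$, and then analyse the quotient $A=S/(f)$ case by case. By Theorem \ref{thm.cla} the algebra $S$ is one of the six entries in Table $2$; the two entries P and TL satisfy $|\bar\s|=1$ while the remaining four (S, S', NC, EC) satisfy $|\bar\s|=2$, and inspection of Lemma \ref{lem.SP} shows that $|\s|=|\bar\s|$ for the representatives in Table $2$, giving the asserted dichotomy $|\s|\neq 2$ versus $|\s|=2$.

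For case (1) I would split into the two sub-types. If $S=k[x,y,z]$ (Type P) then $Z(S)_2=S_2$, so $f$ is an arbitrary nonzero quadratic form, and classical Sylvester's theorem (valid over an algebraically closed field of characteristic $0$) allows any such $f$ to be diagonalised by a linear change of variables as $x_1^2+\cdots+x_j^2$ with $j=\rank(f)\in\{1,2,3\}$, yielding exactly the three listed commutative algebras. If $S$ is of Type TL with representative $k\<x,y,z\>/(yz-zy+x^2,\,zx-xz,\,xy-yx)$, then $Z(S)_2=kx^2$ forces $f=cx^2$ for some $c\in k^{\times}$; in the quotient $A=S/(x^2)$ the relation $yz-zy+x^2$ becomes $yz=zy$ while $zx=xz$ and $xy=yx$ are already among the defining relations of $S$, hence $A\cong k[x,y,z]/(x^2)$.

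For case (2), if $S$ is of Type S, S', NC, or EC, Theorem \ref{thm.cla} shows $S\cong S^{(\a,\b,\c)}$ for $(\a,\b,\c)\in\{(0,0,0),(1,0,0),(1,1,0),(\l,\l,\l)\}$, and $Z(S)_2=kx^2+ky^2+kz^2$ in each such case; hence every $0\neq f\in Z(S)_2$ has the form $ax^2+by^2+cz^2$ with $(a,b,c)\in\PP^2$, exactly as claimed.

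The assertions about $A^!$ reduce to a direct computation of the orthogonal complement $R_A^{\perp}\subset V^*\otimes V^*$ of the defining relation space $R_A$. In case (1), $R_A$ is spanned by the three commutators $[x_i,x_j]$ together with the diagonal form $f=\sum_i f_i x_i^2$; because each off-diagonal symmetric tensor $x_i^*x_j^*+x_j^*x_i^*$ ($i\neq j$) pairs trivially with every commutator and also annihilates the diagonal $f$, these three tensors lie in $R_A^{\perp}$, so the three anti-commutator relations of $k_{-1}[x,y,z]$ already hold in $A^!$. A dimension count ($\dim R_A^{\perp}=9-4=5=3+2$) then yields $A^!\cong k_{-1}[x,y,z]/(G_1,G_2)$ for two additional quadratic relations $G_1,G_2$. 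Case (2) is dual: the defining relations of $S^{(\a,\b,\c)}$ each have the shape (symmetric off-diagonal)\,$+$\,(diagonal) and $f$ is diagonal, so the three commutators $x_i^*x_j^*-x_j^*x_i^*$ lie in $R_A^{\perp}$, forcing $A^!$ to be a commutative quotient $k[x,y,z]/(G_1,G_2)$ by the same dimension count. The only non-routine step in the entire argument is the reduction to Theorem \ref{thm.cla}; once that classification is available, everything else is elementary linear algebra together with classical Sylvester's theorem.
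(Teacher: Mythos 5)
Your proposal is correct and follows the same skeleton as the paper's proof: reduce to the six algebras of Table~2 via Theorem~\ref{thm.cla}, apply Sylvester's theorem for Type~P, observe that Type~TL collapses to $k[x,y,z]/(x^2)$, and read off $f=ax^2+by^2+cz^2$ from $Z(S)_2=kx^2+ky^2+kz^2$ for Types S, S$'$, NC, EC. The one place you diverge is the claim about $A^!$: the paper disposes of case~(1) by unstated ``direct calculations'' and of case~(2) by citing \cite[Theorem 3.7]{H} (with an explicit formula for $G_1,G_2$ when $a\neq 0$), whereas you give a short self-contained argument --- the off-diagonal symmetric (resp.\ antisymmetric) tensors pair to zero with all four defining relations, and the count $\dim R_A^{\perp}=9-4=5$ does the rest. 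That is a clean and arguably preferable justification; the only implicit step is that $f$ is linearly independent of the three relations of $S$ (so that $\dim R_A=4$), which is immediate since $f$ is diagonal and no nonzero combination of the relations is.
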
 

\begin{proof}  By Theorem \ref{thm.cla}, $S$ is isomorphic to one of the algebras 
in Table 2.  

If $S$ is of Type P, then $A$ is isomorphic to 
$$k[x, y, z]/(x^2), k[x, y, z]/(x^2+y^2), k[x, y, z]/(x^2+y^2+z^2)$$ 
by Sylvester's theorem.  If $S$ is of Type TL, then $A \cong k[x, y, z]/(x^2)$ by Table 2.  In either case, $|\s|\neq 2$ by Table 2, and we can show that $A^!$ is isomorphic to 
$$k_{-1}[x, y, z]/(y^2, z^2), k_{-1}[x, y, z]/(x^2-y^2, z^2), k_{-1}[x, y, z]/(x^2-y^2, x^2-z^2)$$
by direct calculations.  
 
If $S$ is of Type S, S', NC, EC, then 
$A\cong S^{(\a, \b, \c)}/(ax^2+by^2+cz^2)$ where $(a, b, c)\in \PP^2$ and $|\s|=2$ by Table 2. By \cite[Theorem 3.7]{H}, $A^!\cong k[x, y, z]/(G_1, G_2)$ for some $G_1, G_2\in k[x, y, z]_2$.  For example, if $a \neq 0$, then 
$$
A^! \cong k[x,y,z] / (a(y^2 - \beta xz) - b(x^2 - \alpha yz), a (z^2 -\gamma xy) - c(x^2-\alpha yz)).
$$
\end{proof}

\begin{corollary} \label{cor.HMM}  There are at most 9 isomorphism classes of homogeneous coordinate algebras $A$ of noncommutative conics.
\end{corollary} 

\begin{proof} 
If $A$ is commutative, then there are exactly 3 isomorphism classes for $A$ by Corollay \ref{cor.cgc} (1).  On the other hand, if $A$ is not commutative, then $A^!\cong k[x, y, z]/(G_1, G_2)$ where $G_1, G_2\in k[x, y, z]_2$ by Corollary \ref{cor.cgc} (2).  Since $A$ is Koszul, 
$$H_{A^!}(t)=1/H_A(-t)=(1+t)^3/(1-t^2)=(1-t^2)^2/(1-t)^3,$$  
so $G_1, G_2\in k[x, y, z]_2$ is a regular sequence, that is,  $A^!$ is a quadratic complete intersection.  As a consequence of the classical result of ``a pencil of conics", there are at most 6 isomorphism classes for $A^!$ (\cite[Section 11, Chapter XIII]{HP}).  Since $A\cong A'$ if and only if $A^!\cong {A'}^!$, there are at most 6 isomorphism classes for $A$.
\end{proof}


\section{Classifications of $E_A$ and $C(A)$} 


Let $A=S/(f)$ be a noncommutative conic where $S$ is a 3-dimensional Calabi-Yau quantum polynomial algebra and $0\neq f\in Z(S)_2$.  In this section, we will show that $A$ satisfies (G1) so that the point variety $E_A\subset \PP^2$ of $A$ is well-defined, and 
classify $E_A$ up to isomorphism. 
Recall in Introduction that there is a finite dimensional algebra $C(A)$ which plays an important role to study 
$A$. 
If $A=S/(f)$ is a noncommutative conic, then $C(A)$ is a 4-dimensional algebra by \cite[Lemma 5.1 (3)]{SV}.  In \cite {H}, we fail to compute $C(A)$ when $S$ is of Type EC.  In this section, we complete the classification of $C(A)$ by using the geometry of $E_A$. 
We first set some definitions and notations.

The Clifford deformation of a quadratic algebra was introduced by He and Ye \cite{HY1}.  

\begin{definition} 
Let $S=T(V)/(R)$ be a quadratic algebra where $R\subset V\otimes V$. A linear map $\theta : R \to k$ is called a {\it Clifford map} if 
$$(\theta \otimes 1- 1 \otimes \theta)(V \otimes R \cap R \otimes V) = 0.$$ 
We define the {\it Clifford deformation} of $S$ associated to $\theta$ by 
$$
S(\theta) : = T(V) / (f - \theta(f)\mid f \in R).
$$
\end{definition}

Let $S=T(V)/(R)$ be a quadratic algebra where $R\subset V\otimes V$.  Note that the $\ZZ_2$-graded structure on $T(V)=(\oplus _{i=0}^{\infty}V^{\otimes 2i})\oplus (\oplus _{i=0}^{\infty}V^{\otimes 2i+1})$ induces a $\ZZ_2$-graded structure on $S(\theta)$.  
The set of Clifford maps $\theta$ of $S^!=T(V^*)/(R^{\perp})$ is in one-to-one correspondence with the set of central elements $f \in Z(S)_2$ by \cite[Lemma 2.8]{HY1}.  We denote by $\theta_f$ the Clifford map of $S^!$ corresponding to $f\in Z(S)_2$.  


If $C$ is a finitely generated commutative algebra over $k$, and $G$ is a finite group acting on $C$, then $G$ naturally acts on  $\Spec C$, and there exists a bijection between the set of $G$-orbits of the closed points of $\Spec C$ and the set of closed points of $\Spec C^G$.
By this reason, we define $(\Spec C)/G:=\Spec C^G$.  

\begin{definition} 
Let $S$ be a connected graded algebra.  
For $f, f'\in S$, we write $f\sim f'$ if $f'=\l f$ for some $0\neq \l\in k$.  
For $f\in S_2$, we define 
$$K_f:=\{g\in S_1\mid g^2\sim f\}/\sim.$$
\end{definition} 

Note that $\#(K_f)$ is exactly the number of $g\in S_1$ up to sign such that $g^2=f$.  

\begin{proposition}  \label{prop.fgE} 
Let $S=\cA^+(E, \s)$ be a 3-dimensional Calabi-Yau quantum polynomial algebra such that $|\s|=2$, $0\neq f\in Z(S)_2$, and $A=S/(f)$. 
\begin{enumerate} 
\item{} For every $g\in S_1$, $g^2\in Z(S)_2$.  
\item{} $\#(K_f)=\#(\Spec C(A))=\#(\Proj A^!)=\#(E_{A^!})=1, 2, 3, 4$. 
\end{enumerate} 
\end{proposition}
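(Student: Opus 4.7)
My plan is to invoke the explicit classification from Corollary~\ref{cor.cgc}, which applies since $|\bar\sigma|=2$ forces $S\cong S^{(\alpha,\beta,\gamma)}=k\langle x,y,z\rangle/(yz+zy+\alpha x^2,\ zx+xz+\beta y^2,\ xy+yx+\gamma z^2)$ for some $\alpha,\beta,\gamma\in k$, and in particular $A^!\cong k[x,y,z]/(G_1,G_2)$ is commutative. For Part (1), I would simply expand $g^2$ for $g=ax+by+cz\in S_1$ and use the three defining relations of $S^{(\alpha,\beta,\gamma)}$ to rewrite the cross terms $xy+yx,yz+zy,zx+xz$, obtaining
\[g^2=(a^2-bc\alpha)x^2+(b^2-ac\beta)y^2+(c^2-ab\gamma)z^2,\]
which lies in $kx^2+ky^2+kz^2=Z(S)_2$ by Theorem~\ref{thm.cla}.

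For Part (2), my strategy is to show that $K_f$, $\Proj A^!$, $E_{A^!}$, and $\Spec C(A)$ all coincide as finite subsets of $\PP^2$. The equality $\#(\Proj A^!)=\#(E_{A^!})$ follows directly from Lemma~\ref{lem.g1c}, because $A^!$ is commutative by Corollary~\ref{cor.cgc}. For $\#(\Spec C(A))=\#(\Proj A^!)$ I would invoke the standard identification $\Spec A^![(f^!)^{-1}]_0=D_+(f^!)\subset\Proj A^!$ and argue that the complement $V(f^!)\cap\Proj A^!=\Proj(A^!/(f^!))=\Proj S^!$ is empty, because $S^!$ is the quadratic dual of a $3$-dimensional AS-regular Koszul algebra and hence finite-dimensional (Hilbert series $(1+t)^3$), so $S^!_+$ is nilpotent.

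The key step will be the identification $K_f=\Proj A^!$. I would write $f=ux^2+vy^2+wz^2$ and, using the formula from Part (1), translate the condition $g^2\sim f$ on $g=ax+by+cz$ into a system of two homogeneous quadratic equations in $(a:b:c)\in\PP^2$ after eliminating the scalar. Matching these term-by-term against the explicit defining relations $G_1,G_2$ of $A^!$ displayed in the proof of Corollary~\ref{cor.cgc}, one sees that they coincide up to sign, which yields a scheme-theoretic identification $K_f=\Proj A^!\subset\PP^2$. The upper bound $\#(K_f)\le 4$ then follows from Bezout's theorem applied to two quadrics in $\PP^2$, or equivalently from the fact that $\Proj A^!$ has Krull dimension $0$ because $A^!$ is free of rank $8$ over the polynomial subring $k[f^!]$; the lower bound $\#(K_f)\ge 1$ follows because two conics over an algebraically closed field always share a point and $\Proj A^!$ is non-empty (as $A^!_+$ is not nilpotent by the Hilbert-series computation).

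The main obstacle I anticipate is the term-by-term comparison in the third step: the formula for $A^!$ recorded in Corollary~\ref{cor.cgc} is stated under the assumption that the coefficient of $x^2$ in $f$ is nonzero, so some bookkeeping is required to handle the remaining cases by cyclic permutation of the variables $x,y,z$ and the parameters $\alpha,\beta,\gamma$. Beyond this, the identification of equations is a direct if slightly tedious computation that ultimately reflects the compatibility between the squaring map on $S_1$ and the Koszul-duality construction that produces $A^!$.
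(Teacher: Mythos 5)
Your proof is correct, and part (2) takes a genuinely different route from the paper's. The paper works through the Clifford deformation: it identifies the affine variety $\{(\l,\mu,\nu)\mid g^2=f\}$ with $\overline{\Spec S^!(\theta_f)}$, quotients by the sign action of $G=\ZZ/2$, and then invokes He--Ye's result that $(S^!(\theta_f))^G=(S^!(\theta_f))_0\cong C(A)$ to conclude $\#(K_f)=\#(\Spec C(A))$; the identification $\Spec C(A)\cong\Proj A^!$ is then asserted using commutativity of $A^!$ and $C(A)$. You bypass the Clifford deformation entirely: you identify $K_f$ with $\cV(G_1,G_2)=\overline{\Proj A^!}$ directly by eliminating the scalar from $g^2\sim f$ and matching minors against the explicit relations $G_1,G_2$ from Corollary \ref{cor.cgc}, and you obtain $\#(\Spec C(A))=\#(\Proj A^!)$ by the elementary observation that $\Spec C(A)=D_+(f^!)$ and that the complement $\Proj(A^!/(f^!))=\Proj S^!$ is empty since $S^!$ has Hilbert series $(1+t)^3$. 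Your route is more self-contained (no appeal to \cite[Proposition 4.3]{HY1}) at the cost of the case analysis you anticipate; the paper's route is cleaner in that the isomorphism $(S^!(\theta_f))_0\cong C(A)$ packages the whole comparison at once and does not depend on which coefficient of $f$ is nonzero. Your bounds ($\le 4$ by Bezout or by $\dim_k C(A)=4$, $\ge 1$ since two conics in $\PP^2$ always meet and $A^!_+$ is not nilpotent) match the paper's in substance. One small point to make explicit in your minor-matching step: the locus $\cV(G_1,G_2)$ records proportionality of $g^2$ with $f$ allowing the scalar to be $0$, so you should check that $g^2\neq 0$ for $0\neq g\in S_1$ (which follows from the formula in part (1) together with the constraints on $(\a,\b,\c)$ in Theorem \ref{thm.cla}); otherwise $\cV(G_1,G_2)$ could a priori be strictly larger than $K_f$.
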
 

\begin{proof}  We may assume that 
$$S=k\<x, y, z\>/(yz+zy+\a x^2, zx+xz+\b y^2, xy+yx+\c z^2)$$ 
for some $\a, \b, \c\in k$, and 
$f=ax^2+by^2+cz^2$ for $a, b, c\in k$ by Corollary \ref{cor.cgc}. 

(1) For every $g=\l x+\mu y+\nu z\in S_1$,  
\begin{align*} 
g^2 &=  \l^2 x^2 + \mu^2 y^2 + \nu^2 z^2 + \l\mu(xy + yx) + \nu\l(xz + zx) + \mu\nu(yz + zy)\\
&= (\l^2-\a \mu\nu)x^2+(\mu^2-\b \nu\l)y^2+(\nu^2-\c \l\mu)z^2\in Z(S)_2
\end{align*}
by Theorem \ref{thm.cla}.  

(2) 
It is easy to calculate that the quadratic dual of $S$ is 
$$S^! \cong k[u,v,w]/(u^2-\a vw, v^2-\b wu, w^2-\c uv),$$ 
and the Clifford map $\theta_f$ of $S^!$ induced by $f$
is given by 
$$
u^2-\a vw\mapsto a, v^2-\b wu \mapsto b, w^2-\c uv \mapsto c,
$$
so the Clifford deformation of $S^!$ associated to $\theta_f$ is
$$S^!(\theta_f)\cong k[u, v, w]/(u^2-\a vw-a, v^2-\b wu-b, w^2-\c uv-c)$$ 
as $\ZZ_2$-graded algebras.
A finite group $G=\{\e, \d\}$ acts on $S^!(\theta_f)$ by $\d(h)=(-1)^{\deg h}h$ for a homogeneous element $h\in S^!(\theta_f)$ so that  
$(S^!(\theta_f))^{G}= (S^!(\theta_f))_0
\cong C(A)$ by \cite[Proposition 4.3]{HY1}.    

On the other hand, $g^2=f=ax^2+by^2+cz^2$ if and only if 
$$(\l, \mu, \nu)\in \cV(u^2-\a vw-a, v^2-\b wu-b, w^2-\c uv-c)=\overline{\Spec S^!(\theta_f)}$$  
by (1). 
Since the natural action of $G$ on $\overline{\Spec S^!(\theta_f)}$ is given by $\d^*(\l, \mu, \nu)=(-\l, -\mu, -\nu)$, we see that $\#(K_f)=\#((\Spec S^!(\theta_f))/G)$.   
Since $A^!$ and $C(A)$ are commutative by Corollary \ref{cor.cgc},
$$(\Spec S^!(\theta_f))/G= \Spec (S^!(\theta_f))^G\cong \Spec C(A)\cong \Proj A^!=E_{A^!}\neq \emptyset $$ 
by Lemma \ref{lem.g1c}. Since $\#(\Spec C(A))\leq \dim_k C(A)=4$, 
the result follows.  
\end{proof} 

\begin{lemma} \label{lem.g1a} Let $S$ be a quadratic algebra satisfying (G1) with $\cP(S)=(E, \s)$, $0\neq f\in S_2$, and $A=S/(f)$.  If $\s^2=\id$ and $f=g^2$ for some $g\in S_1$  (that is, $K_f\neq \emptyset$), then $A$ satisfies (G1) with $\cP(A)=(E_A, \s_A)$ where 
$$E_A=(E\cap \cV(g))\cup \s(E\cap \cV(g))$$ 
and $\s_A=\s|_{E_A}$.  
\end{lemma}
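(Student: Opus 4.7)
The plan is to compute $\overline{\Gamma}_A$ directly as a subset of $\Delta_{E,\sigma}$ and recognize it as $\Delta_{E_A,\sigma_A}$. Since $A=S/(f)$ is quadratic with defining relations those of $S$ together with $f$, applying Lemma~\ref{lem.tilw} to the extra relation $\tilde f$ gives
\[
\overline{\Gamma}_A \;=\; \overline{\Gamma}_S\cap \cV(f)\;=\;\Delta_{E,\sigma}\cap \cV(f),
\]
where $f$ is evaluated on $\PP(V^*)\times \PP(V^*)$ through any lift to $V\otimes V$ (two lifts differ by an element of the relation space of $S$, which vanishes on $\Delta_{E,\sigma}$ by (G1) for $S$).

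The key step would exploit the factorization $f=g^2$: I pick the lift $g\otimes g\in V\otimes V$. Then for $(p,\sigma(p))\in \Delta_{E,\sigma}$,
\[
f(p,\sigma(p))\;=\;g(p)\,g(\sigma(p)),
\]
so $(p,\sigma(p))\in \overline{\Gamma}_A$ if and only if $g(p)=0$ or $g(\sigma(p))=0$, equivalently $p\in (E\cap \cV(g))\cup \sigma^{-1}(E\cap \cV(g))$. The hypothesis $\sigma^2=\id$ gives $\sigma^{-1}=\sigma$, so the locus of such $p$ is exactly $E_A=(E\cap \cV(g))\cup \sigma(E\cap \cV(g))$.

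Finally, because $\sigma$ is an involution on $E$, it interchanges the two pieces of $E_A$ and hence $\sigma(E_A)=E_A$, so $\sigma_A:=\sigma|_{E_A}$ is a well-defined (involutive) automorphism of $E_A$. Combining the two displays yields $\overline{\Gamma}_A=\{(p,\sigma(p))\mid p\in E_A\}=\Delta_{E_A,\sigma_A}$, verifying (G1) for $A$ with $\cP(A)=(E_A,\sigma_A)$. The only delicate point in the whole argument is the replacement of the class $f\in S_2$ by the concrete lift $g\otimes g\in V\otimes V$ when evaluating on $\Delta_{E,\sigma}$; this is legitimate because (G1) for $S$ guarantees that any two lifts differing by a relation have the same vanishing locus there, so the Clifford-like factorization $f=g^2$ together with the involution condition $\sigma^2=\id$ do all the work, and the rest reduces to direct bookkeeping.
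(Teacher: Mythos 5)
Your proposal is correct and follows essentially the same route as the paper: compute $\overline\G_A=\overline\G_S\cap\cV(f)$, use the lift $g\otimes g$ to write $f(p,\s(p))=g(p)g(\s(p))$, and use $\s^2=\id$ to identify the resulting locus with $(E\cap\cV(g))\cup\s(E\cap\cV(g))$ and to check its $\s$-stability. Your explicit remark about why the choice of lift of $f$ to $V\otimes V$ is harmless is a small point the paper leaves implicit, but it does not change the argument.
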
  

\begin{proof} 
Since 
$$\overline \G_A=\overline \G\cap \cV(f)=\{(p, \s(p))\mid p\in E, f(p, \s(p))=g(p)g(\s(p))=0\},$$ 
$(p, \s(p))\in \overline \G_A$ if and only if $p\in E$ and $g(p)g(\s(p))=0$ if and only if $(\s(p), \s^2(p))=(\s(p), p)\in \overline \G_A$, so $A$ satisfies $(G_1)$ with $\cP(A)=(E_A, \s_A)$ where 
\begin{align*}
E_A & =\{p\in E\mid f(p, \s(p))=g(p)g(\s(p))=0\} \\
& =\{p\in E\mid g(p)=0\}\cup \{p\in E\mid g(\s(p))=0\} \\
& =(E\cap \cV(g))\cup (E\cap \s^{-1}(\cV(g))) \\
& =(E\cap \cV(g))\cup \s(E\cap \cV(g)),
\end{align*} 
and  $\s_A=\s|_{E_A}$. 
\end{proof} 

\begin{proposition} \label{prop.g1} 
If $S=\cA^+(E, \s)$ is a 3-dimensional Calabi-Yau quantum polynomial algebra, $0\neq f\in Z(S)_2$, and $A=S/(f)$, then both $A$ and $A^!$ satisfy (G1).   
\end{proposition}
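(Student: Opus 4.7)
The plan is to apply Corollary \ref{cor.cgc} to split into two cases, then use Lemma \ref{lem.g1c} whenever the algebra in question is commutative and Lemma \ref{lem.g1a} otherwise.

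In case (2) of Corollary \ref{cor.cgc}, $A = S^{(\a, \b, \c)}/(f)$ and $A^!$ is commutative. Then $A^!$ satisfies (G1) by Lemma \ref{lem.g1c}. For $A$, we have $|\bar\s| = 2$ (Table 2), so $\s^2 = \id$, and Proposition \ref{prop.fgE}(2) yields $\#(K_f) \geq 1$; thus there exists $g \in S_1$ with $g^2 = \l f$ for some $0 \neq \l \in k$, and rescaling $g$ by $\l^{-1/2}$ (available since $k$ is algebraically closed) makes $g^2 = f$, so Lemma \ref{lem.g1a} gives that $A$ satisfies (G1).

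In case (1) of Corollary \ref{cor.cgc}, $A$ is commutative and quadratic, so $A$ satisfies (G1) by Lemma \ref{lem.g1c}. The nontrivial part is $A^!$, which is graded commutative but not commutative. By the proof of Corollary \ref{cor.cgc}, $A^! = S'/(G_1, G_2)$ where $S' := k_{-1}[x, y, z] = S^{(0,0,0)}$ is a 3-dimensional Calabi-Yau quantum polynomial algebra of type S with $|\bar\s_{S'}| = 2$ (Table 2), and $G_1, G_2$ each lie in $\{y^2,\ z^2,\ x^2 - y^2,\ x^2 - z^2\}$. The key identity in $S'$ is $(\l x + \mu y + \nu z)^2 = \l^2 x^2 + \mu^2 y^2 + \nu^2 z^2$ (cross terms vanish by anticommutativity); since $k$ is algebraically closed, every such $G_i$ is therefore a square of a degree 1 element of $S'$ (for instance, $x^2 - y^2 = (x + \sqrt{-1}\,y)^2$ and $z^2 = z \cdot z$). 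Applying Lemma \ref{lem.g1a} to $S' \to S'/(G_1)$ then shows that $B := S'/(G_1)$ satisfies (G1) with $\s_B^2 = \id$. Since $G_1$ and $G_2$ are linearly independent in $S'_2$, the image of $G_2$ in $B$ is nonzero and it remains a square of a degree 1 element of $B$, so a second application of Lemma \ref{lem.g1a} yields that $A^! = B/(G_2)$ satisfies (G1).

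The main obstacle is case (1) for $A^!$: because $A^!$ is not commutative, Lemma \ref{lem.g1c} is unavailable, and one must instead realize $A^!$ as a two-stage quotient of $k_{-1}[x, y, z]$ and verify at each stage, via the anticommutativity squaring identity, that the relation being added is a nonzero square of a degree 1 element of the intermediate algebra; once this is in place, the rest is routine bookkeeping using the lemmas already established.
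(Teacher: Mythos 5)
Your proposal is correct and follows essentially the same route as the paper: the same case split via Corollary \ref{cor.cgc}, Lemma \ref{lem.g1c} for whichever of $A$, $A^!$ is commutative, and Lemma \ref{lem.g1a} for the other one --- using Proposition \ref{prop.fgE} to produce $g$ with $g^2\sim f$ when $|\s|=2$, and the explicit presentations $k_{-1}[x,y,z]/(G_1,G_2)$ with $G_1,G_2$ squares (applying Lemma \ref{lem.g1a} twice) when $|\s|\neq 2$. You merely make explicit some bookkeeping the paper leaves implicit (rescaling $g$ so that $g^2=f$ exactly, and checking that the intermediate quotient $S'/(G_1)$ still meets the hypotheses of Lemma \ref{lem.g1a}).
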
 

\begin{proof} 
The case $|\s|\neq 2$:  Since $A$ is commutative by Corollary \ref{cor.cgc}, $A$ satisfies (G1) by Lemma \ref{lem.g1c}.  Since $k_{-1}[x, y, z]=\cA(\mathcal{V}(xyz), \t)$ is a geometric quantum polynomial algebra with $|\t|=2$, and $A^!$ is isomorphic to one of the following algebras
\begin{align*}
& k_{-1}[x, y, z]/(y^2, z^2), \\
& k_{-1}[x, y, z]/((x+\sqrt{-1}y)^2, z^2), \\
& k_{-1}[x, y, z]/((x+\sqrt {-1}y)^2, (x+\sqrt {-1}z)^2)
\end{align*}
by the proof of Corollary \ref{cor.cgc},
$A^!$ satisfies (G1) by applying Lemma \ref{lem.g1a} twice.  

The case $|\s|=2$:  
By Proposition \ref{prop.fgE}, there exists $g\in S_1$ such that $f=g^2$, so $A$ satisfies (G1) by Lemma \ref{lem.g1a}.  Since $A^!$ is commutative by Corollary \ref{cor.cgc}, $A^!$ satisfies (G1) by Lemma \ref{lem.g1c}.  
\end{proof}

\begin{example} \label{exm-type-S-E}
By using Lemma \ref{lem.g1a}, we calculate the point variety $E_A=(E\cap \cV(g))\cup \s(E\cap \cV(g))$ for a noncommutative conic $A=S/(f)$ where $S$ is a 3-dimensional Calabi-Yau quantum polynomial algebra of Type S and $g\in S_1$ such that $g^2=f$.
By Corollary \ref{cor.cgc}, we may assume that $S=S^{(0, 0, 0)}=k_{-1}[x, y, z]$ and $f$ is one of $x^2, x^2+y^2, x^2+y^2+z^2\in Z(S)_2$.  
If $\cP(S)=(E, \s)$, then $E = \cV(xyz)=\cV(x)\cup \cV(y)\cup \cV(z)$ and $\left\{
             \begin{array}{ll}
              \sigma(0,b,c)=(0, b, -c), \\
              \sigma(a,0,c)=(-a,0, c), \\
              \sigma(a,b,0)=(a,-b, 0)
             \end{array}
             \right.
             $ by Lemma \ref{lem.SP}.  
\begin{enumerate}
\item[(1)] If $f = x^2$, then $E\cap \cV(g)=\cV(xyz)\cap \cV(x)=\cV(x)$.
Since $\s(\cV(x))=\cV(x)$,  
$
E_A = \cV(x)\subset \PP^2
$
is a line. 
\item[(2)] If $f = x^2 + y^2$, then $f = (x+y)^2$, so 
$$E\cap \cV(g)=\cV(xyz)\cap \cV(x+y)= \{ (1,-1,0),(0,0,1)\} \subset \PP^2.
$$
Since $\s(1,-1,0) = (1,1,0),\s(0,0,1) = (0,0,1)$, 
$$
E_A = \{ (1,-1,0),(0,0,1), (1,1,0)\} \subset \PP^2,
$$ 
so  $\#(E_A) = 3$.
\item[(3)] If $f = x^2 + y^2 + z^2$, then $f = (x+y+z)^2$, so
$$E\cap \cV(g)  = \cV(xyz)\cap \cV(x+y+z)=\{ (0, 1,-1),(-1,0,1),(1,-1, 0)\} \subset \PP^2.
$$
Since $\s(0,1,-1) = (0, 1,1), \s(-1, 0, 1)=(1, 0, 1), \s(1,-1,0) = (1,1,0)$, 
$$
E_A = \{ (0, 1,-1),(-1,0,1),(1,-1, 0),(0, 1,1),(1,0,1),(1,1, 0)\}\subset \PP^2,
$$ so  $\#(E_A) = 6$.
\end{enumerate}
\end{example}

Recall that every 3-dimensional Calabi-Yau quantum polynomial algebra of Type EC is isomorphic to a 3-dimensional Sklyanin algebra (see \cite[Corollary 4.3]{Ma}), and every 3-dimensional Sklyanin algebra $S$ such that $Z(S)_2\neq 0$ is of the form 
$$S=k\<x, y, z\>/(yz+zy+\l x^2, zx+xz+\l y^2, xy+yx+\l z^2)$$
where $\l^3\neq 0, 1, -8$ by Theorem \ref{thm.cla}.  In this case, $(E, \s)=\cP(S)=\cP^+(S)$ is described as follows: $E=\cV(F)\subset \PP^2$ is a smooth elliptic curve where 
$$F:=\l (x^3+y^3+z^3)-(\l^3+2)xyz\in k[x, y, z]_3.$$  
If we endow a group structure on $E$ with the identity $o:=(1, -1, 0)\in E$, then $\s\in \Aut E$ is a translation by a 2-torsion point $(1, 1, \l)\in E[2]$.  We use the fact that $\s(p)+\s(q)=p+q$ for every points $p, q\in E$.   More explicitly, 
$\s$ can be described as
$$\s(u, v, w)=\begin{cases} (\l v^2-uw, \l u^2-vw, w^2-\l ^2uv) & \textnormal { if } (u, v, w)\in E\setminus E_1  \\
(\l w^2-uv, v^2-\l^2 uw, \l u^2-vw) & \textnormal { if } (u, v, w)\in E\setminus E_2 \end{cases}$$
where $E_1:=\{(1, \e, \l \e^2)\mid \e^3=1\}, E_2:=\{(1, \l \e^2, \e)\mid \e^3=1\}$ (cf. \cite{Fr}).  

\begin{lemma} \label{lem.abc} 
Let $S=\cA^+(E, \s)$ be a 3-dimensional Sklyanin algebra, and $0\neq f=ax^2+by^2+cz^2\in Z(S)_2$.  If $a^3=b^3=c^3$, then $\#(K_f)=4$.  
\end{lemma}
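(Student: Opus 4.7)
The strategy is to use Proposition \ref{prop.fgE}(2), which identifies $\#(K_f)$ with $\#(E_{A^!})$ and gives the upper bound $\#(K_f) \le 4$; it therefore suffices to exhibit four distinct closed points in $E_{A^!}$. Because $S$ is of Type EC with $|\bar\sigma| = 2$ by Theorem \ref{thm.cla}, Corollary \ref{cor.cgc} gives $A^! \cong k[x,y,z]/(G_1, G_2)$ and $A^!$ is commutative, so Lemma \ref{lem.g1c} identifies $E_{A^!}$ with the scheme-theoretic intersection $\cV(G_1) \cap \cV(G_2) \subset \PP^2$ of two plane conics; by B\'ezout this intersection has at most four points counted with multiplicity.

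After specializing $\alpha = \beta = \gamma = \lambda$ in the explicit formulas for $G_1, G_2$ from the proof of Corollary \ref{cor.cgc} and writing (WLOG $a \ne 0$) $b = \eta a,\ c = \eta' a$ with $\eta^3 = (\eta')^3 = 1$, the key algebraic input is that each $G_i$ factors as a product of two linear forms: using $\eta^{-2} = \eta$, a direct expansion verifies
\[
G_1 = a(y - \eta^{-1} x)(y + \eta^{-1} x + \lambda\eta z), \quad G_2 = a(z - (\eta')^{-1} x)(z + (\eta')^{-1} x + \lambda\eta' y).
\]
Let $L_1, L_2$ be the linear factors of $G_1$ and $M_1, M_2$ those of $G_2$; then $\cV(G_1) \cap \cV(G_2)$ consists set-theoretically of the four candidate points $P_{ij} := L_i \cap M_j$ for $i, j \in \{1, 2\}$.

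The remaining task, and the main obstacle, is a case analysis verifying (i) that no $L_i$ coincides with any $M_j$ as a line in $\PP^2$, so that the intersection is $0$-dimensional, and (ii) that the four points $P_{ij}$ are pairwise distinct. Writing down explicit coordinates, each potential coincidence reduces to a polynomial equation in $\lambda$ with coefficients built from cube roots of unity via $\eta, \eta'$. I expect coincidence of linear factors to force $\lambda = \eta\eta'$, whence $\lambda^3 = 1$; every pairwise coincidence of the $P_{ij}$ other than $P_{11} = P_{22}$ to force $\lambda = -2\eta\eta'$, whence $\lambda^3 = -8$; and the final case $P_{11} = P_{22}$ to reduce to a quadratic of the form $\lambda u(\lambda + u) = 2$ with $u = \eta\eta'$, whose roots $\lambda \in \{u, -2u\}$ again satisfy $\lambda^3 \in \{1, -8\}$. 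All possibilities are excluded by the hypothesis $\lambda^3 \neq 0, 1, -8$, so $E_{A^!}$ consists of exactly four points and $\#(K_f) = 4$ as required.
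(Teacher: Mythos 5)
Your overall strategy coincides with the paper's: both arguments reduce the count to $\#(\cV(G_1)\cap\cV(G_2))$ for the two conics cutting out $\Proj A^!$, using Proposition \ref{prop.fgE}(2) for the identification $\#(K_f)=\#(E_{A^!})$ and for the upper bound $4$. Your explicit factorizations of $G_1$ and $G_2$ into linear forms are correct: with $\eta^{-2}=\eta$ one recovers $G_1=a\bigl((y^2-\lambda xz)-\eta(x^2-\lambda yz)\bigr)$, which matches the formula from the proof of Corollary \ref{cor.cgc}. The problem is that the decisive step --- ruling out every coincidence among the lines $L_i,M_j$ and among the four points $P_{ij}$ --- is only announced, not carried out: you write ``I expect'' each coincidence to force $\lambda^3\in\{1,-8\}$, which amounts to roughly ten separate verifications (four line coincidences and six point coincidences), none of which appears in the proposal. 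Spot checks suggest your expectations are right (for instance $L_2=M_2$ does force $\lambda=\eta\eta'$, and $P_{11}=P_{12}$ forces $\lambda=-2\eta\eta'$), so the argument would very likely close, but as written the proof stops exactly where the work begins.

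The paper avoids this case analysis with a more economical device worth knowing. It computes the common zero of the partial derivatives of $G_1$, namely the single point $(b^2\lambda,b\lambda,-2)$; this shows $\cV(G_1)$ is a pair of distinct lines meeting there, and then a single evaluation $G_2(b^2\lambda,b\lambda,-2)=-(\lambda-bc)(\lambda+2bc)^2\neq 0$ (using $b^3c^3=1$ and $\lambda^3\neq 1,-8$) shows the node of $\cV(G_1)$ avoids $\cV(G_2)$; by symmetry the node of $\cV(G_2)$ avoids $\cV(G_1)$. Elementary incidence geometry then excludes all coincidences at once: a common linear component would place the node of one conic on the other, and any coincidence $P_{ij}=P_{i'j'}$ with $(i,j)\neq(i',j')$ would put one of the two nodes on the other conic. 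To salvage your version, either carry out the ten checks explicitly or replace them with this two-evaluation argument.
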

  
\begin{proof} Without loss of generality, we may assume that $a=1$ and $b^3=c^3=1$. By the proof of Corollary \ref{cor.cgc},
$A^!\cong k[x, y, z]/(G_1, G_2)$ where 
$$G_1=(y^2-\l xz)-b(x^2-\l yz), G_2=(z^2-\l xy)-c(x^2-\l yz)\in k[x, y, z]_2.$$
By calculations,
$$\cV(G_{1x}, G_{1y}, G_{1z})=\cV(-\l z-2bx, 2y+b\l z, -\l x+b\l y)=\{(b^2\l, b\l, -2)\},$$
so $\cV(G_1)$ is a pair of lines meeting at $(b^2\l, b\l, -2)$.  If 
$$G_2(b^2\l, b\l, -2)=-(\l^3+3bc\l^2-4)=-(\l -bc)(\l+2bc)^2=0,$$
then $\l=bc$ or $\l=-2bc$, so $\l^3=b^3c^3=1$ or $\l^3=-8b^3c^3=-8$, which is a contradiction, hence $(b^2\l, b\l, -2)\not \in \cV(G_2)$.  By symmetry, $\cV(G_2)$ is a pair of lines meeting at $(c^2\l, -2, c\l)\not \in \cV(G_1)$, so $\#(K_f)=\#(\Proj A^!)=\#(\cV(G_1)\cap \cV(G_2))=4$ by Proposition \ref{prop.fgE}.
\end{proof} 
 
\begin{example} 
Let $S=\cA^+(E, \s)$ be a 3-dimensional Sklyanin algebra, and $0\neq f=ax^2+by^2+cz^2\in Z(S)_2$. 	If $a^3=b^3=c^3$, then $S/(f)\cong S/(x^2+y^2+\e z^2)$ where $\e^3=1$ by \cite[Lemma 4.4, Theorem 4.5]{H}.  If $g\in S_1$ is one of the following 
$$x+y+\e^2z, x+y-\e(\e+\l)z,  \e x-(\e+\l)y+z, -(\e+\l) x+\e y+z\in S_1,$$
then we can directly check that $g^2\sim f$, so $\#(K_f)\geq 4$.  By Proposition \ref{prop.fgE}, we have $\#(K_f)=4$. 		

 		We can find $g\in S_1$ such that $g^2\sim f$ using geometry as follows:  if 
 		$		
 		p = (1,0,-\e),q = (0,1,-\e) \in E		
 		$,  		
 		then $\sigma(o) = (1,1,\l), \sigma(p) = (\e,\l,\e^2), \sigma(q) = (\l,\e,\e^2)$, and $f(o,\s(o)) = f(p,\sigma(p)) = f(q,\sigma(q)) = 0$.  By Lemma \ref{lem.g1a} and by Bezout's theorem, we see that $\#(E_A) \leq 2\#(E\cap \cV(g))\leq 6$, so  	
 		$E_{A} = \{o,p,q, \s(o),\sigma(p),\sigma(q)\}$. 		
 		If $g_1,g_2,g_3,g_4 \in S_1$ such that 
 		\begin{align*} 		
 		& E\cap \mathcal{V}(g_1) = \{p,q\},E\cap \mathcal{V}(g_2) = \{o,\sigma(p)\}, \\
 		& E\cap \mathcal{V}(g_3) = \{p, \s(o) \},E\cap \mathcal{V}(g_4) = \{q, \s(o) \}, 		
 		\end{align*}		
 		then we can check that  
 		$$g_1 \sim x+y+\e^2z, g_2 \sim x+y-\e(\e+\l)z,  g_3 \sim \e x-(\e+\l)y+z, g_4 \sim -(\e+\l) x+\e y+z.$$
\end{example}

\begin{lemma} \label{lem.ng2} Let $S$ be a 3-dimensional Sklyanin algebra.  For every $0\neq f\in Z(S)_2$, $\#(K_f)\geq 2$.  
\end{lemma}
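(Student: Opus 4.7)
The plan is to translate the claim via Proposition~\ref{prop.fgE}(2) into a statement about the complete intersection of two conics in $\PP^2$, and then rule out the degenerate case by a direct rank-$1$ minor computation on the associated matrix pencil.

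By Corollary~\ref{cor.cgc} and \cite[Corollary~4.3]{Ma}, we may assume $S = k\<x,y,z\>/(yz+zy+\lambda x^2,\ zx+xz+\lambda y^2,\ xy+yx+\lambda z^2)$ with $\lambda^3\notin\{0,1,-8\}$ and $f = ax^2+by^2+cz^2$ with $(a\!:\!b\!:\!c)\in\PP^2$, and by the cyclic symmetry of the Sklyanin relations we may further assume $a=1$. Setting $G_1 = y^2-\lambda xz-b(x^2-\lambda yz)$ and $G_2 = z^2-\lambda xy-c(x^2-\lambda yz)$ as in the proof of Corollary~\ref{cor.cgc}, Proposition~\ref{prop.fgE}(2) gives $\#(K_f)=\#(\cV(G_1)\cap\cV(G_2))$, a zero-dimensional complete intersection of length $4$ in $\PP^2$. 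By a standard fact from the theory of pencils of conics (the essential input being that fourth-order contact of two smooth conics at a single base point forces their difference, up to scalar, to be a squared linear form, with analogous conclusions obtained by direct inspection in the singular cases), this length-$4$ intersection is supported on a single point only if the matrix pencil $\{M(s,t):=sM_{G_1}+tM_{G_2}\}_{(s:t)\in\PP^1}$ contains a rank-$1$ matrix, i.e.\ a double line.

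The key step is thus to rule out any rank-$1$ member of this pencil. Suppose $M(s,t)$ has rank one, so every $2\times 2$ minor of $M(s,t)$ vanishes. The principal minor $M_{11}M_{22}-M_{12}^2=0$ gives $s(bs+ct)=-\lambda^2 t^2/4$, while the off-principal minor $M_{12}M_{33}-M_{13}M_{23}=0$ (obtained from rows $(1,3)$ and columns $(2,3)$) gives $s(bs+ct)=2t^2/\lambda$. Combining yields $t^2(\lambda^3+8)=0$, and the hypothesis $\lambda^3\neq -8$ forces $t=0$. Then $M(s,0)=sM_{G_1}$ with $s\neq 0$, but the $2\times 2$ minor of $M_{G_1}$ on rows $(1,2)$ and columns $(2,3)$ equals $\lambda/2\neq 0$, so $M_{G_1}$ has rank $\geq 2$, giving the required contradiction. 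Hence the base locus has at least two distinct points, proving $\#(K_f)\geq 2$. The main conceptual ingredient is the pencil-theoretic input identifying the supported-at-one-point case with a double line in the pencil; once this is granted, the hypothesis $\lambda^3\neq -8$ enters decisively in the minor computation.
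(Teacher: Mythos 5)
Your proof is correct, but it takes a genuinely different route from the paper's. The paper argues on the algebra side: assuming $\#(K_f)=1$, it notes $C(A)$ would have to be local, hence isomorphic to $k[u,v]/(u^2,v^2)$ or $k[u]/(u^4)$; it then disposes of the case $a^3=b^3=c^3$ via Lemma \ref{lem.abc}, and otherwise invokes the explicit presentation $C(A)\cong k[u]/(\varphi(u))$ from \cite{H} to rule out $k[u,v]/(u^2,v^2)$ (one generator) and $k[u]/(u^4)$ (by showing $\varphi$ cannot be a fourth power, solving a system whose only solutions force $\lambda^3\in\{0,-8\}$). You instead work on the geometric side with $\Proj A^!=\cV(G_1)\cap\cV(G_2)$ and the pencil of conics it generates, reducing everything to the nonexistence of a rank-one member; this avoids both the case split on $a^3=b^3=c^3$ and any reliance on the quartic $\varphi$ from \cite{H}, and your minor computations are correct (I checked that the two minors give $s(bs+ct)=-\lambda^2t^2/4$ and $s(bs+ct)=2t^2/\lambda$, forcing $t^2(\lambda^3+8)=0$, and that the $(1,2)\times(2,3)$ minor of $M_{G_1}$ equals $\lambda/2$). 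The one soft spot is the ``standard fact'' you invoke, which you only sketch: it is true, but the honest proof needs two cases. If the pencil has a smooth member $C$, then a singular member of rank two would be a pair of distinct lines each meeting $C$ only at the single base point, hence each tangent to $C$ there, which is absurd; and if the pencil has no smooth member at all, then (having zero-dimensional base locus, which here follows from Proposition \ref{prop.fgE}) it consists of cones with a common vertex, and such a pencil of binary quadratics always contains a square because a line in $\PP(k[x,y]_2)$ must meet the discriminant conic. With that lemma made precise, your argument is complete; both proofs use $\lambda^3\neq -8$ at the decisive moment, and yours is arguably the more uniform of the two.
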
 

\begin{proof} If $\#(K_f)= 1$, then $\#(\Spec C(A)) = \#(K_f) =1$ by Proposition \ref{prop.fgE}, so $C(A)$ is isomorphic to either $k[u,v]/(u^2,v^2)$ or $k[u]/(u^4)$ (cf. \cite[Corollary 4.10]{H}).   We will show that this is not the case.    

Let $0\neq f=ax^2+by^2+cz^2\in Z(S)_2$ and $A = S/(f)$.  By Lemma \ref{lem.abc}, we may assume that either $a^3\neq c^3$ or $b^3\neq c^3$.  Without loss of generality, we may assume that $c=1$.  If $a^3\neq 1$, then $C(A) \cong k[u]/(\varphi(u))$ where
\begin{align*}
\varphi(u) =& u^4 + \frac{3b\l^2}{(\l^3 -1)} u^3 + \frac{3b^2 \l^4 - a\l^3 - 2a}{(\l^3 -1)^2} u^2 \\
& +\frac{ \l^2 (b^3 \l^4  - 2 a b \l^3 +(1+ a^3) \l  -a b
  )}{(\l^3 -1)^3} u 
 + \frac{-a b^2\l^4 + (a^3 b + b)\l^2 -a^2 }{(\l^3 -1)^3}
 \end{align*}
 by \cite[Example 11]{H}.  Since $C(A)$ is generated by one element over $k$ as an algebra, $C(A) \ncong k[u,v]/(u^2,v^2)$. If $\varphi(u) = (u - \mu)^4$ for some $\mu \in k$, then, by comparing the coefficients in both sides, we have 
 \begin{align*}
 \left\{\begin{array}{rl}  -4\mu &= \frac{3b\l^2}{\l^3-1}, \\
 6\mu^2 &= \frac{3b^2\l^4- a \l^3 - 2a}{(\l^3 - 1)^2}, \\
 -4\mu^3 &= \frac{ \l^2 (b^3 \l^4  - 2 a b \l^3 +(1+ a^3) \l  -a b
  )}{(\l^3 -1)^3}, \\
  \mu^4 &= \frac{-a b^2\l^4 + (a^3 b + b)\l^2 -a^2}{(\l^3 -1)^3}.
  \end{array}\right.
 \end{align*}
By solving the above system of equations, we get 
\begin{align*}
& \l = a = \mu = 0, \; \;  \textnormal { or } \\
& \l^3 = -8, a^3=1, b= -a\l/2, \mu = a/3,
\end{align*} 
which contradicts the condition that $\lambda^3 \neq 0,1, -8$, so $C(A) \ncong k[u]/(u^4)$.

By symmetry, the result holds for the case that $b^3 \neq 1$.  
\end{proof} 

\begin{lemma} \label{lem.FH} Let $S=\cA^+(E, \s)$ be a 3-dimensional Sklyanin algebra, $0\neq f=ax^2+by^2+cz^2\in Z(S)_2$ and $A=S/(f)$.  Define 
\begin{align*}
& H_1=ax(\l y^2-xz)+by(\l x^2-yz)+cz(z^2-\l^2 xy), \\
& H_2=ax(\l z^2-xy)+by(y^2-\l^2 xz)+cz(\l x^2-yz)\in k[x, y, z]_3.
\end{align*}
Then the following holds:   
\begin{enumerate} 
\item{} $E\cap \cV(H_1)=E_A\cup E_1$.   If $a^3\neq b^3$, then $E\cap \cV(H_1)=E_A\sqcup E_1$ (disjoint union).
\item{} $E\cap \cV(H_2)=E_A\cup E_2$.   If $a^3\neq c^3$, then $E\cap \cV(H_2)=E_A\sqcup E_2$ (disjoint union).
\end{enumerate}
\end{lemma}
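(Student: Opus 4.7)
The key reduction, from Lemma \ref{lem.g1a} combined with Proposition \ref{prop.fgE}, is that
\[ E_A = \{ p \in E : f(p, \s(p)) = 0 \}, \]
where $f$ is regarded as the bilinear form $f((u_1,u_2,u_3),(v_1,v_2,v_3)) := au_1v_1 + bu_2v_2 + cu_3v_3$ on $V \otimes V$ corresponding to $f = ax^2 + by^2 + cz^2$. The lemma is then essentially a coordinate computation organized around the two explicit formulas for $\s$ displayed just before Lemma \ref{lem.abc}.

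First I would substitute these formulas into $f(p, \s(p))$: the first formula (valid on $E \setminus E_1$) produces exactly $H_1(p)$, and the second (valid on $E \setminus E_2$) produces exactly $H_2(p)$, essentially by inspection. Hence
\[ E_A \cap (E \setminus E_i) = \cV(H_i) \cap (E \setminus E_i), \quad i = 1, 2. \]
Next I would verify $H_1 \equiv 0$ on $E_1 = \{(1, \e, \l\e^2) : \e^3 = 1\}$ and $H_2 \equiv 0$ on $E_2 = \{(1, \l\e^2, \e) : \e^3 = 1\}$ by direct substitution: at such points each of the three brackets in the relevant $H_i$ collapses (for instance $\l y^2 - xz$ vanishes identically at $(1,\e,\l\e^2)$, and the remaining two brackets vanish after invoking $\e^3 = 1$). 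Combining these two steps yields $E \cap \cV(H_i) = E_A \cup E_i$ for $i = 1, 2$, giving the equalities in (1) and (2).

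For the disjointness assertions I would first note $E_1 \cap E_2 = \emptyset$: equating the coordinates of $(1, \e, \l\e^2)$ with $(1, \l\e'^2, \e')$ forces $\l^3 = 1$, contradicting the Sklyanin hypothesis $\l^3 \neq 1$. Consequently the second formula for $\s$ applies on all of $E_1$, so $f(p, \s(p))$ is computed there by $H_2(p)$. A short simplification yields $H_2((1,\e,\l\e^2)) = (\l^3 - 1)(a\e - b)$, and the product of the three values as $\e$ ranges over cube roots of unity equals $(\l^3 - 1)^3(a^3 - b^3)$ up to a sign. Under $a^3 \neq b^3$ this is nonzero for every $\e$, so no point of $E_1$ lies in $E_A$. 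The symmetric calculation on $E_2$—evaluating $H_1$ there to obtain $(\l^3 - 1)(a\e - c)$, whose product over $\e^3 = 1$ factors through $a^3 - c^3$—handles the disjointness claim in (2).

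I expect no serious conceptual obstacle; the argument is a routine coordinate verification once the identification $E_A = \{p \in E : f(p,\s(p)) = 0\}$ is in hand. The only delicate bookkeeping is to apply the correct formula for $\s$ on each locus, which is precisely why the disjointness argument relies on the complementarity $E_1 \cap E_2 = \emptyset$ and uses the ``opposite'' polynomial ($H_2$ on $E_1$, $H_1$ on $E_2$).
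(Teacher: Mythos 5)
Your proposal is correct and follows essentially the same route as the paper: identify $E_A$ with $\{p\in E\mid f(p,\s(p))=0\}$ via Lemma \ref{lem.g1a}, observe that the two explicit branches of $\s$ turn $f(p,\s(p))$ into $H_1$ on $E\setminus E_1$ and $H_2$ on $E\setminus E_2$, check $H_i$ vanishes on $E_i$, and settle disjointness by evaluating on $E_i$ with the other branch (the paper computes $\s(1,\e,\l\e^2)=(1,-\e,0)$ and gets $a-b\e^2$, which agrees with your $(\l^3-1)(a\e-b)$ up to a nonzero factor). No gaps.
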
 

\begin{proof} (1)  It is easy to check that $E_1:=\{(1, \e, \l \e^2)\mid \e^3=1\}\subset E\cap \cV(H_1)$, so $p\in E_A\cup E_1$ if and only if $p\in E\setminus E_1$ and $H_1(p)=0$, or $p\in E_1$ if and only if $p\in E$ and $H_1(p)=0$ if and only if $p\in E\cap \cV(H_1)$,  hence $E\cap \cV(H_1)=E_A\cup E_1$.   If $(1, \e, \l \e^2)\in E_A$, then 
$$f((1, \e, \l \e^2), \s(1, \e, \l \e^2))=f((1, \e, \l \e^2), (1, -\e, 0))=a-b\e^2=0,$$ 
so $a^3=b^3$, hence the result. 

(2) This holds by symmetry. 
\end{proof} 

The next result follows from Max Noether's Fundamental Theorem (\cite[Section 5.5]{F}).

\begin{lemma} \label{lem.Noe}  Let $0\neq F, G, H\in k[x, y, z]_3$.  If $\#(\cV(F, G))=9$ and $\cV(F, G)\subset \cV(H)$, then $H=\mu F+\nu G$ for some $\mu, \nu\in k$. 
\end{lemma}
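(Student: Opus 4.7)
The plan is to deduce the statement as a direct consequence of Max Noether's Fundamental Theorem (equivalently, the classical Cayley--Bacharach theorem for cubics), exactly as cited in \cite[Section 5.5]{F}.

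First, I would use B\'ezout's theorem: the two cubics $F$ and $G$ meet in $9$ points counted with intersection multiplicity. The assumption $\#(\cV(F,G))=9$ (geometric count) therefore forces each of the nine intersection points to be simple with local intersection multiplicity one, so $\cV(F,G)$ is a reduced $0$-dimensional complete intersection scheme. In particular, $F$ and $G$ share no common factor, hence are linearly independent in the $10$-dimensional vector space $k[x,y,z]_3$.

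Next, the Cayley--Bacharach theorem asserts that the nine points of $\cV(F,G)$ impose only eight independent linear conditions on cubics; equivalently, every cubic through any eight of them must pass through the ninth. Consequently the linear system
\[
L:=\{\,P\in k[x,y,z]_3 \mid P|_{\cV(F,G)}=0\,\}
\]
has dimension $\dim_k k[x,y,z]_3-8=10-8=2$ as a $k$-vector space. Since $F,G\in L$ are linearly independent, they form a basis of $L$.

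Finally, the hypothesis $\cV(F,G)\subset \cV(H)$ says precisely that $H$ vanishes on all nine points of $\cV(F,G)$, so $H\in L$. Expressing $H$ in the basis $\{F,G\}$ yields $H=\mu F+\nu G$ for some $\mu,\nu\in k$, as required. The only potentially subtle point is verifying Noether's local conditions at each of the nine intersection points, but these are automatic in the reduced transverse case at hand, so the argument reduces to the citation of \cite[Section 5.5]{F}.
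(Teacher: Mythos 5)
Your argument is correct and is essentially the paper's own proof: the paper simply cites Max Noether's Fundamental Theorem (\cite[Section 5.5]{F}), and you supply the standard supporting details, namely that B\'ezout forces all nine intersection points to be transverse, so Noether's local conditions are automatic and the degree count makes the coefficients $\mu,\nu$ constants.
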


Let $S=\cA^+(E, \s)$ be a 3-dimensional Calabi-Yau quantum polynomial algebra such that $|\s|=2$, and $0\neq f\in Z(S)_2$.  Recall that $g^2\in Z(S)_2$ for every $g\in S_1$ by Proposition \ref{prop.fgE} (1) so that the point variety $E_{S/(g^2)}\subset \PP^2$ of $S/(g^2)$ is well-defined by Proposition \ref{prop.g1}.  
We define 
$$L_f:=\{g\in S_1\mid E_{S/(g^2)}=E_{S/(f)}\}/\sim.$$  
If $g^2\sim f$, then $E_{S/(g^2)}=E_{S/(f)}$, so $K_f\subset L_f$, hence $\#(K_f)\leq \#(L_f)$. 

\begin{proposition} \label{prop.fAE} 
Let $S=\cA^+(E, \s)$ be a 3-dimensional Sklyanin algebra, $0\neq f=ax^2+by^2+cz^2\in Z(S)_2$ such that $a^3\neq b^3$ or $a^3\neq c^3$, and $A=S/(f)$.   If $\#(E_A)=6$,  
then $K_f=L_f$ 
so that $\#(K_f)=\#(L_f)$.  
\end{proposition}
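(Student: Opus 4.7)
The plan is to establish the reverse inclusion $L_f \subset K_f$; since $K_f \subset L_f$ already holds by the remark preceding the proposition, this will give $K_f = L_f$. So I would fix a representative $g \in L_f$ with $g \neq 0$ and aim to show $g^2 \sim f$.

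First, since $S$ is a Sklyanin algebra (hence a domain) with $\bar\s^2 = \id$, Proposition \ref{prop.fgE}(1) gives $g^2 \in Z(S)_2$, and $g^2 \neq 0$ follows from $S$ being a domain. By Theorem \ref{thm.cla}, $Z(S)_2 = kx^2 + ky^2 + kz^2$, so one may write $g^2 = a'x^2 + b'y^2 + c'z^2$ with $(a', b', c') \in \PP^2$. Let $H_1', H_2' \in k[x,y,z]_3$ denote the cubics defined by the same formulas as in Lemma \ref{lem.FH}, but with $(a, b, c)$ replaced by $(a', b', c')$.

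Next, I would apply Lemma \ref{lem.FH} to the conic $S/(g^2)$: since $E_{S/(g^2)} = E_A$ by the assumption $g \in L_f$, we get $E \cap \cV(H_1') = E_A \cup E_1$ and $E \cap \cV(H_2') = E_A \cup E_2$. Without loss of generality assume $a^3 \neq b^3$; the case $a^3 \neq c^3$ is symmetric using $H_2, H_2'$ in place of $H_1, H_1'$. Then Lemma \ref{lem.FH}(1) yields the disjoint union $E \cap \cV(H_1) = E_A \sqcup E_1$, a set of $\#(E_A) + \#(E_1) = 6 + 3 = 9$ distinct points in $\PP^2$. Writing $F$ for the defining cubic of $E$, we have $\cV(F, H_1) = E \cap \cV(H_1)$ consisting of exactly $9$ points. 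Because $H_1'$ vanishes on $E \cap \cV(H_1') = E_A \cup E_1 = \cV(F, H_1)$, Lemma \ref{lem.Noe} (Max Noether) gives $H_1' = \mu F + \nu H_1$ for some $\mu, \nu \in k$.

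To conclude, I would compare monomial coefficients. Direct expansion gives
\[ H_1 = a\l xy^2 - ax^2 z + b\l x^2 y - by^2 z + cz^3 - c\l^2 xyz, \]
which contains no $x^3$ term, while $F$ has $x^3$-coefficient $\l \neq 0$. Hence $\mu = 0$ and $H_1' = \nu H_1$; comparing the coefficients of $x^2 z$, $y^2 z$, and $z^3$ then yields $(a', b', c') = \nu(a, b, c)$, so $g^2 \sim f$, i.e., $g \in K_f$. The main technical point is ensuring that $\cV(F, H_1)$ is a set of exactly $9$ distinct closed points of $\PP^2$ so that Max Noether applies, and this is precisely what the hypotheses $\#(E_A) = 6$ and $a^3 \neq b^3$ (or $a^3 \neq c^3$) deliver via Lemma \ref{lem.FH}.
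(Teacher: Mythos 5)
Your proposal is correct and follows essentially the same route as the paper: reduce to $L_f\subset K_f$, use Lemma \ref{lem.FH} with $\#(E_A)=6$ and $a^3\neq b^3$ (or $a^3\neq c^3$) to get a nine-point set $\cV(F,H_1)$ on which $H_1'$ vanishes, apply Lemma \ref{lem.Noe} to write $H'_1=\mu F+\nu H_1$, and kill $\mu$ by the $x^3$-coefficient before matching the remaining coefficients. Your explicit expansion of $H_1$ to verify the absence of an $x^3$ term just makes the paper's one-line coefficient comparison more transparent.
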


\begin{proof} It is enough to show that $L_f\subset K_f$.  Suppose that $a^3\neq b^3$.  Since $\#(E_A)=6$,
$$\#(\cV(F, H_1))=\#(E\cap \cV(H_1))=\#(E_A\sqcup E_1)=\#(E_A)+\#(E_1)=9$$ 
where $F=\l (x^3+y^3+z^3)-(\l^3+2)xyz\in k[x, y, z]_3$ and
$$H_1=ax(\l y^2-xz)+by(\l x^2-yz)+cz(z^2-\l^2 xy)\in k[x, y, z]_3$$ 
by Lemma \ref{lem.FH}.  Let $g\in L_f$, $g^2=a'x^2+b'y^2+c'z^2\in Z(S)_2$, and 
$$H'_1=a'x(\l y^2-xz)+b'y(\l x^2-yz)+c'z(z^2-\l^2 xy)\in k[x, y, z]_3.$$  
Since $E_{S/(g^2)}=E_{S/(f)}=E_A$, 
$$\cV(F, H_1)=E\cap \cV(H_1)=E_A\cup E_1=E_{S/(g^2)}\cup E_1=E\cap \cV(H'_1)\subset \cV(H'_1)$$
by Lemma \ref{lem.FH}, so $H'_1=\mu F+\nu H_1$ for some $\mu, \nu\in k$ by Lemma \ref{lem.Noe}.  Comparing the coefficients of $x^3$ in both sides, we can see that $\mu=0$, so $H'_1=\nu H_1$.  It follows that $g^2=\nu f\sim f$, so $g\in K_f$.  

By symmetry, the result holds for the case that $a^3\neq c^3$. 
\end{proof} 

\begin{theorem} \label{thm.Skl} If $S$ is a 3-dimensional Sklyanin algebra, $0\neq f\in Z(S)_2$, and $A=S/(f)$, then exactly one of the following holds:   
\begin{enumerate}
\item{} $\#(E_A)=2$ and $\#(E_{A^!})=2$. 
\item{} $\#(E_A)=4$ and $\#(E_{A^!})=3$. 
\item{} $\#(E_A)=6$ and $\#(E_{A^!})=4$. 
\end{enumerate}
\end{theorem}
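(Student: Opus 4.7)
The plan is to prove the theorem in two stages: first I constrain $\#(E_A) \in \{2,4,6\}$ and $\#(E_{A^!}) \in \{2,3,4\}$, then I establish the pairing $\#(E_{A^!}) = \tfrac{1}{2}\#(E_A) + 1$ by enumerating effective degree-$3$ divisors on $E_A$.

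For the ranges, I pick some $g \in K_f$ via Lemma \ref{lem.ng2} and apply Lemma \ref{lem.g1a} to get $E_A = (E \cap \cV(g)) \cup \sigma(E \cap \cV(g))$. Bezout bounds $\#(E \cap \cV(g))$ between $1$ and $3$ (since $E$ is a cubic and $\cV(g)$ is a line), hence $1 \leq \#(E_A) \leq 6$. Since $\sigma$ is translation by the nonzero $2$-torsion point $\tau = (1,1,\lambda)$ on the smooth elliptic curve $E$, $\sigma$ has no fixed points on $E$, so $\sigma|_{E_A}$ is a fixed-point-free involution and $\#(E_A)$ is even; combined, $\#(E_A) \in \{2,4,6\}$. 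The range of $\#(E_{A^!}) = \#(K_f)$ follows from Proposition \ref{prop.fgE}(2) and Lemma \ref{lem.ng2}.

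For the pairing, I exploit the fact that the chosen identity $o = (1,-1,0)$ of the group law on $E$ is a flex, so three points of $E$ are collinear in $\PP^2$ iff they sum to $0$ in the group law. Each $g \in K_f$ yields an effective degree-$3$ divisor $E \cdot \cV(g)$ supported in $E_A$, summing to $0$, whose support together with its $\sigma$-translate equals $E_A$; this defines a set $L_f$ of admissible divisors and an injection $K_f \hookrightarrow L_f$. Writing $E_A$ as a disjoint union of $\sigma$-orbits $\{p_i, p_i+\tau\}$ and using $2\tau = 0$, I carry out the enumeration: for $\#(E_A) = 6$ with $p_1 + p_2 + p_3 = 0$, the admissible divisors are the four triples obtained by flipping an even number of $\tau$-shifts, so $\#(L_f) = 4$; for $\#(E_A) = 4 = \{P, Q, P+\tau, Q+\tau\}$ normalized so that $Q = -2P$ (forced by the existence of some $g \in K_f$, regardless of whether its intersection divisor with $E$ is of tangent type $2P+Q$ or secant type $P + (P+\tau) + (Q+\tau)$), the admissible divisors are $2P+Q$, $2(P+\tau)+Q$, and $P+(P+\tau)+(Q+\tau)$, so $\#(L_f) = 3$; for $\#(E_A) = 2 = \{p, p+\tau\}$ normalized so that $3p = 0$, the admissible divisors are $3p$ (the flex tangent at $p$) and $p + 2(p+\tau) = 3p$ (the tangent at $p+\tau$ passing through $p$), so $\#(L_f) = 2$.

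Finally, I upgrade these counts of $\#(L_f)$ to $\#(K_f)$ case by case. For $\#(E_A) = 6$, Proposition \ref{prop.fAE} gives $K_f = L_f$ (with Lemma \ref{lem.abc} handling the excluded subcase $a^3 = b^3 = c^3$). For $\#(E_A) = 2$, the lower bound $\#(K_f) \geq 2$ from Lemma \ref{lem.ng2} together with $K_f \subseteq L_f$ and $\#(L_f) = 2$ forces $\#(K_f) = 2$. The hardest part will be the case $\#(E_A) = 4$: to conclude $\#(K_f) = 3$ I must establish $K_f = L_f$ by adapting the Max Noether argument of Proposition \ref{prop.fAE}, but $\cV(F, H_1) = E_A \sqcup E_1$ now has only $4 + 3 = 7$ set-theoretic points, short of the $9$ required for Lemma \ref{lem.Noe}. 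The replacement uses that the intersection divisor $F \cdot H_1$ on $\PP^2$ still has total degree $9$; by tracking intersection multiplicities at the excess points where $F$ and $H_1$ meet non-transversely, one should recover $H_1' = \mu F + \nu H_1$ for any $g' \in L_f$ (where $H_1'$ is the cubic associated to $(g')^2$ as in Lemma \ref{lem.FH}), yielding $(g')^2 \sim f$ and hence the desired identification $K_f = L_f$.
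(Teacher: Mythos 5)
Your overall architecture matches the paper's: split on $\#(E_A)\in\{2,4,6\}$ (the evenness via the fixed-point-free involution $\s|_{E_A}$ is exactly the paper's observation that $\s(p)\neq p$ on $E$), enumerate the admissible lines/divisors $L_f$ through the group law with $2\tau=0$, use $K_f\subseteq L_f$ together with Lemma \ref{lem.ng2} in the two-point case, and invoke Proposition \ref{prop.fAE} plus Lemma \ref{lem.abc} in the six-point case. Your divisor counts $\#(L_f)=2,3,4$ agree with the paper's.

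The gap is where you yourself flag it: the four-point case. You need to pass from $\#(L_f)=3$ to $\#(K_f)=3$, and your proposed route --- adapting the Max Noether argument of Proposition \ref{prop.fAE} by ``tracking intersection multiplicities'' --- is not carried out and does not follow from what you have established. The hypothesis $E_{S/((g')^2)}=E_A$ for $g'\in L_f$ is purely set-theoretic; it tells you that $H_1'$ passes through the $7$ points of $E_A\sqcup E_1$, but Noether's AF+BG theorem requires $H_1'$ to satisfy the local Noether conditions at the two points where $\cV(F)$ and $\cV(H_1)$ meet with multiplicity $2$, i.e.\ $H_1'$ must cut $E$ with multiplicity $\geq 2$ there. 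A cubic through $7$ points generically moves in a net, not a pencil, so without those multiplicity conditions the conclusion $H_1'=\mu F+\nu H_1$ simply fails, and nothing in your setup supplies them. The paper avoids this entirely with a short pigeonhole argument you should adopt: writing $L_f=\{g_1,g_2,g_3\}$, each $g_i^2$ is a nonzero central element with $E_{S/(g_i^2)}=E_A$, so $K_{g_i^2}\subseteq L_{g_i^2}=L_f$ and $\#(K_{g_i^2})\geq 2$ by Lemma \ref{lem.ng2}; hence the equivalence classes of the relation $g_i\approx g_j \Leftrightarrow g_i^2\sim g_j^2$ partition a $3$-element set into classes of size $\geq 2$, forcing a single class, so $g_1^2\sim g_2^2\sim g_3^2\sim f$ and $\#(K_f)=3$. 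With that substitution your proof is complete; as written, the four-point case is unproven.
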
 

\begin{proof} By Lemma \ref{lem.g1a}, and Proposition \ref{prop.g1}, $E_A=(E\cap \cV(g))\cup \s(E\cap \cV(g))$ for some $g\in S_1$.  Since $\s(p)\neq p$ for every $p\in E$, either \begin{enumerate}
\item{} $E_A=\{p, \s(p)\}$, 
\item{} $E_A=\{p, q, \s(p), \s(q)\}$, or 
\item{} $E_A=\{p, q, r, \s(p), \s(q), \s(r)\}$
\end{enumerate} 
for some distinct points $p, q, r\in E$.  Since $\#(E_{A^!})=\#(K_f)$ by Proposition \ref{prop.fgE}, we will compute $\#(K_f)$ in each case. 
We use the following geometric properties. 
\begin{itemize} 
\item{} For every $0\neq g\in S_1$, $0\neq g^2\in Z(S)_2$ by Proposition \ref{prop.fgE} (1), so $\#(K_{g^2})\geq 2$  by Lemma \ref{lem.ng2}. 
\item{} For $g\in L_f$, $E_{S/(g^2)}=(E\cap \cV(g))\cup \s(E\cap \cV(g))=E_A$ by Lemma \ref{lem.g1a}, so 
$$\#(E_A)/2\leq \#(E\cap \cV(g))\leq \max\{\#(E_A), 3\}$$
by Bezout's Theorem.  
\item{} For $0\neq g\in S_1$, $\#(E\cap \cV(g))=1, 2$ if and only if $\cV(g)$ is a tangent line at some point of $E$.  
\item{} For $0\neq g, g'\in S_1$, $g\sim g'$ if and only if $\cV(g)=\cV(g')$ if and only if $E\cap \cV(g)=E\cap \cV(g')$.  
\end{itemize}

(1) Suppose that $E_A=\{p, \s(p)\}$ so that $\#(E\cap \cV(g))=1, 2$ for every $g\in L_f$.  Since $\cV(g)$ is a tangent line at $p$ or at $\s(p)$ (but not at both points), $\#(K_f)\leq 2$.  
By Lemma \ref{lem.ng2}, $\#(K_f)=2$.    

(2) Suppose that $E_A=\{p, q, \s(p), \s(q)\}$ so that $\#(E\cap \cV(g))=2, 3$ for every $g\in L_f$.   By geometry, it is easy to see that 
there exists $g_1\in L_f$ such that $\#(E\cap \cV(g_1))=2$ so that $\cV(g_1)$ is a tangent line at some point of $E$.  Without loss of generality, we may assume that $\cV(g_1)$ is a tangent line at $p$ and $E\cap \cV(g_1)=\{p, q\}$ so that $2p+q=0$.
Since $2\s(p)+q=2p+q=0$, there exists $g_2\in L_f$ such that $E\cap \cV(g_2)=\{\s(p), q\}$ ($\cV(g_2)$ is a tangent line at $\s(p)$).  Since $p+\s(p)+\s(q)=2p+q=0$, there exists $g_3\in L_f$ such that $E\cap \cV(g)=\{p, \s(p), \s(q)\}$.  By  geometry, it is easy to see that 
there is no other possible $g\in L_f$,
so $\#(K_f)\leq \#(L_f)=3$.
By Lemma \ref{lem.ng2}, $\#(K_{g_i^2})\geq 2$ for $i=1, 2, 3$, so, for every $i$, there exists $j\neq i$ such that $g_i^2\sim g_j^2$. 
It follows that $g_1^2\sim g_2^2\sim g_3^2\sim f$, so $\#(K_f)=3$. 

(3) Let $f=ax^2+by^2+cz^2\in Z(S)_3$.  If $a^3=b^3=c^3$, then $\#(K_f)=4$ by Lemma \ref{lem.abc}, so we will assume that either $a^3\neq b^3$ or $a^3\neq c^3$.  Suppose that $E_A=\{p, q, r, \s(p), \s(q), \s(r)\}$ so that $\#(E\cap \cV(g))=3$ for every $g\in L_f$.   Without loss of generality, we may assume that there exists $g_1\in L_f$ such that $E\cap \cV(g_1)=\{p, q, r\}$ so that $p+q+r=0$.   Since 
$$p+\s(p)+\s(q)=\s(p)+q+\s(r)=\s(p)+\s(q)+r=0,$$ 
there exist $g_2, g_3, g_4\in L_f$ such that 
\begin{align*}
& E\cap \cV(g_2)=\{p, \s(q), \s(r)\}, \\
& E\cap \cV(g_3)=\{\s(p), q, \s(r)\}, \\
& E\cap \cV(g_4)=\{\s(p), \s(q), r\},
\end{align*}
so $\#(K_f)=\#(L_f)\geq 4$   
by Proposition \ref{prop.fAE}.
By Proposition \ref{prop.fgE},  $\#(K_f)=4$. 
\end{proof}  

\begin{theorem} \label{thm.main2}  
Let $S=\cA^+(E, \s), S'=\cA^+(E', \s')$ be 3-dimensional Calabi-Yau quantum polynomial algebras, $0\neq f\in Z(S)_2, 0\neq f'\in Z(S')_2$, and $A=S/(f), A'=S'/(f')$.  
\begin{enumerate}
\item{} If $|\s|, |\s'|\neq 2$, then the following are equivalent: 
\begin{enumerate}
\item{} $A\cong A'$. 
\item{} $\grmod A\cong \grmod A'$. 
\item{} $E_A\cong E_{A'}$ (as schemes). 
\item{} $E_{A^!}\cong E_{{A'}^!}$ (as varieties). 
\item{} $C(A)\cong C(A')$.
\end{enumerate} Moreover, we have the following table:
$$\begin{array}{|c|c|c|c|}
\multicolumn{4}{c}{\textnormal{Table 3}} \\[5pt] \hline
E_A & \#(E_{A^!}) & C(A) & \textnormal { Type of $S$} \\
\hline
\textnormal{ a double line }  &  \textnormal{ 1 } & k_{-1}[u, v]/(u^2, v^2) & P, TL  \\
\hline
\textnormal{ two lines } &  \textnormal{ 2  } & k_{-1}[u, v]/(u^2, v^2-1) & P  \\
\hline 
\textnormal{ a smooth conic } & \textnormal{ 0  } & M_2(k)\cong k_{-1}[u, v]/(u^2-1, v^2-1) & P \\ 
\hline
\end{array}$$
\item{} If $|\s|=|\s'|=2$, then the following are equivalent: 
\begin{enumerate}
\item{} $E_A\cong E_{A'}$ (as varieties). 
\item{} $E_{A^!}\cong E_{{A'}^!}$ (as schemes).
\item{} $C(A)\cong C(A')$.
\end{enumerate} Moreover, we have the following table: 
$$\begin{array}{|c|c|c|c|}
\multicolumn{4}{c}{\textnormal{Table 4}} \\[5pt] \hline
\#(E_A) & \#(E_{A^!}) & C(A) & \textnormal {Type  of  $S$} \\
\hline
\infty  & \textnormal{ 1  } & k[u, v]/(u^2, v^2) & S, S'   \\
\hline
\textnormal{ 1  }  &  \textnormal{ 1  }   & k[u]/(u^4) & S', NC \\
\hline
\textnormal{ 2  }  &  \textnormal{ 2  }   & k[u]/(u^3)\times k & NC, EC \\
\hline
\textnormal{ 3  }  &  \textnormal{ 2  }   & 
(k[u]/(u^2))^2\cong k[u, v]/(u^2, v^2-1) & S, S', NC  \\
\hline
\textnormal{ 4  }  &  \textnormal{ 3  }   & k[u]/(u^2)\times k^2 & S', NC, EC  \\
\hline
\textnormal{ 6  }  &  \textnormal{ 4  }   & k^4\cong k[u, v]/(u^2-1, v^2-1) & S, S', NC, EC \\
\hline
\end{array}$$
In Table 4, if $\#(E_A)=\infty$, then $E_A$ is a line.
\end{enumerate}
\end{theorem}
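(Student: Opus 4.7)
The overall plan is to reduce to a finite list of representatives via Corollary \ref{cor.cgc} and then compute each listed invariant for each representative; the equivalences then follow from the fact that the rows of Tables 3 and 4 are pairwise distinguished by those invariants.

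For Part (1), Corollary \ref{cor.cgc} gives just three representatives, namely $k[x,y,z]/(x^2)$, $k[x,y,z]/(x^2+y^2)$, and $k[x,y,z]/(x^2+y^2+z^2)$. The implications (a)$\Rightarrow$(b)$\Rightarrow$(c),(d),(e) are formal: (b)$\Rightarrow$(c) uses Lemma \ref{lem.es}, while (b)$\Rightarrow$(e) is Lemma \ref{lem.cpn} announced in the introduction. For each representative I would compute $E_A \subset \PP^2$ via Lemma \ref{lem.g1c}, obtaining a double line, two lines, or a smooth conic; obtain $A^!$ explicitly from the formulas in the proof of Corollary \ref{cor.cgc}; compute $E_{A^!}$ by applying Lemma \ref{lem.g1a} twice, using that $k_{-1}[x,y,z]=\cA^+(\PP^2,\t)$ has $|\t|=2$; and compute $C(A) \cong (S^!(\theta_f))_0$ via the Clifford deformation identification of \cite{HY1}. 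The three rows of Table 3 are then visibly pairwise distinct, which yields the reverse implications.

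For Part (2), Corollary \ref{cor.cgc} writes $A = S^{(\a,\b,\c)}/(ax^2+by^2+cz^2)$, and I would organize the analysis by Type of $S$. For Types S, S', and NC, where $S$ has a PBW basis, Proposition \ref{prop.fgE}(1) produces $g\in S_1$ with $g^2\sim f$, and Lemma \ref{lem.g1a} gives $E_A = (E\cap \cV(g))\cup \s(E\cap \cV(g))$, which is computed case by case exactly as illustrated in Example \ref{exm-type-S-E}. For Type EC, Theorem \ref{thm.Skl} has already partitioned the possibilities into the three classes $\#(E_A)\in\{2,4,6\}$ with matching $\#(E_{A^!})\in\{2,3,4\}$. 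In each case $C(A)\cong (S^!(\theta_f))_0$ is a $4$-dimensional commutative algebra, and its isomorphism class is determined by $\#(\Spec C(A)) = \#(E_{A^!})$ (Proposition \ref{prop.fgE}) together with the local multiplicities at its closed points, which I would read off from explicit generators of $S^!(\theta_f)$. The equivalences (a)$\Leftrightarrow$(b)$\Leftrightarrow$(c) then reduce to the observation that the six rows of Table 4 are pairwise distinguished by $(\#(E_A),\#(E_{A^!}))$, and equivalently by $C(A)$.

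The main obstacle is the Type EC case, where $S$ has no PBW basis so $E_A$ cannot be obtained from a monomial analysis; instead one must work on the elliptic curve $E$ itself, using that $\s$ is translation by a $2$-torsion point and exploiting the group law to enumerate configurations $E\cap \cV(g)$. The essential input there is Theorem \ref{thm.Skl}, already established via Lemmas \ref{lem.abc}--\ref{lem.Noe} and Proposition \ref{prop.fAE}. A secondary, more bookkeeping-level task is to verify that every row of Table 4 is actually realized by each Type label appearing there; I would confirm this using the examples collected in the introductory table, supplemented by a short direct computation for Types S' and NC.
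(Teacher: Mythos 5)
Your overall strategy (reduce to the representatives of Corollary \ref{cor.cgc}, compute all invariants row by row, and deduce the equivalences from the fact that the rows are pairwise distinguished) is the same as the paper's. Part (1) is fine. In Part (2), however, there are two places where your plan is underspecified in a way that matters.

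First, for the equivalence (b) $\Leftrightarrow$ (c) you appeal to the rows being ``pairwise distinguished by $(\#(E_A),\#(E_{A^!}))$,'' but statement (b) by itself only hands you the scheme $E_{A^!}$, and its cardinality does not separate rows 1 from 2, nor 3 from 4, of Table 4. What you actually need is that the scheme $E_{A^!}$ determines $C(A)$; the paper gets this for free from the observation that when $|\s|=2$ both $A^!$ and $C(A)$ are commutative (Corollary \ref{cor.cgc}), so $\Spec C(A)\cong \Proj A^!=E_{A^!}$ as schemes by Lemma \ref{lem.g1c}, giving (b) $\Leftrightarrow$ (c) directly. You should add this identification rather than route through cardinalities.

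Second, and more seriously, your criterion for pinning down $C(A)$ --- ``$\#(\Spec C(A))=\#(E_{A^!})$ together with the local multiplicities'' --- does not determine a $4$-dimensional commutative algebra: $k[u]/(u^4)$ and $k[u,v]/(u^2,v^2)$ have the same point count and multiplicity, and with two points one must distinguish $k[u]/(u^3)\times k$ from $k[u,v]/(u^2,uv,v^2)\times k$. For Types S, S$'$, NC this is absorbed into the direct calculations you propose, but for Type EC with $\#(E_A)=2$ (equivalently $\#(E_{A^!})=2$) there is no PBW basis and the point count leaves the local ring structure open; this is exactly where the paper must do genuine work. Its argument: by the proof of Theorem \ref{thm.Skl} one may take $g\in S_1$ with $g^2\sim f$ and $\cV(g)$ a tangent line at $p$, forcing $p\in E[3]$; the finitely many such $g$ are listed explicitly, $f$ is normalized to $(4-\l^3)x^2+3\l^2y^2+3\l^2z^2$ via \cite[Lemma 4.4, Theorem 4.5]{H}, and then $C(A)\cong k[u]/(\varphi(u))$ with $\varphi(u)=(u+\tfrac{2}{3\l})^3(u+\tfrac{\l^3+2}{\l(\l^3-1)})$ by \cite[Example 11]{H}, whence $C(A)\cong k[u]/(u^3)\times k$. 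Without some computation of this kind your proof of the third row of Table 4 for Type EC is incomplete.
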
 

\begin{proof} 
(1) If $|\s|, |\s'|\neq 2$, then $A, A'$ are commutative conics by Corollary \ref{cor.cgc} so that $E_A=\Proj A$ by Lemma \ref{lem.g1c}, so it is well-known that (a) $\Leftrightarrow$  (b) $\Leftrightarrow$  (c).
It is easy to show Table 3 by direct calculations.     

(2) If $|\s|=2$, then $A^!$ and $C(A)$ are commutative by Corollary \ref{cor.cgc}, so $\Spec C(A)\cong \Proj A^!=E_{A^!}$ by Lemma \ref{lem.g1c}.  It follows that $E_{A^!}\cong E_{{A'}^!}$ (as schemes) if and only if $C(A)\cong C(A')$.  
  
We can show Table 4 by direct calculations except for Type EC (see Example \ref{exm-type-S-E} for the calculations of $E_A$, and see \cite[Theorem 4.8]{H} for calculations of $C(A)$).  Suppose that $S$ is of Type EC.  If $\#(E_A) = 4,6$, then Table 4 follows from Proposition \ref{prop.fgE}, Theorem \ref{thm.Skl} and \cite[Corollary 4.10]{H}. 
If $\#(E_A) = 2$, then, 
by the proof of Theorem \ref{thm.Skl}, we may assume that $E_A = \{p, \sigma(p)\}$ and there exists $g \in S_1$ such that $g^2\sim f$ and $\mathcal{V}(g)$ is a tangent line at $p$ to $E$ not passing through $\s(p)$ so that $p\in E[3]$.  
Since $p \in E[3]=\{(0,1, -\e),(-\e,0,1), (1, -\e,0)\mid \e^3 = 1\}$, $g$ is one of the following
$$
\e(\l^3 + 2) x + 3\l y + 3 \e^2 \l z, 3\e^2\l x + \e(\l^3 + 2) y + 3  \l z, 3\l x + 3 \e^2 \l y + \e(\l^3 + 2)z,  
$$
so $g^2$ associates to one of the following
$$
\frac{(4-\lambda^3)}{3\lambda^2} x^2 +  \e y^2 + \e^2 z^2, \e^2 x^2 +  \frac{(4-\lambda^3)}{3\lambda^2} y^2 + \e z^2, \e x^2 +  \e^2 y^2 +\frac{(4-\lambda^3)}{3\lambda^2} z^2 
$$
by direct calculations.  By \cite[Lemma 4.4, Theorem 4.5]{H},
$$A=S/(f)=S/(g^2) \cong S/((4-\lambda^3)x^2+ 3\lambda^2y^2 + 3\lambda^2z^2).$$ 
Since $\l^3\neq 0, 1, -8$, we can show that $((4-\lambda^3)/3\lambda^2)^3 \neq 1$, so $C(A) \cong  k[u]/(\varphi(u))$ where 
\begin{align*}
 \varphi(u) =& u^4 + \frac{3\l^2}{(\l^3 -1)} u^3 + \frac{2(5\l^3+4)}{3\l^2(\l^3 -1)} u^2 +\frac{4(11\l^3+16)}{27\l^3(\l^3-1)} u + \frac{8(\l^3+2)}{27\l^4(\l^3 -1)} \\
 =& \left(u+\frac{2}{3\l}\right)^3\left(u+\frac{\l^3 + 2}{\l(\l^3-1)}\right)
\end{align*}
by \cite[Example 11]{H}.  Since $\l^3\neq 0, 1, -8$, we can show that $\frac{2}{3\l}\neq \frac{\l^3 + 2}{\l(\l^3-1)}$, so $C(A) \cong k[u]/(u^3) \times k$.
\end{proof}

Let $S=\cA^+(E, \s)$ and $A=S/(f)$ be as in Theorem \ref{thm.main2} above.  We describe $(E_A, \s_A)$ together with $\cup _{g\in K_f}\cV(g)$ in the picture below for the case that $\#(E_A)<\infty$.  In the picture, $E_A$ consists of hollow points and solid points where hollow points indicate singular points of $E$, and $\s_A$ is described by arrows.  Moreover, $\cV(g)$ for $g\in K_f$ are described by dotted lines.  Recall that $\#(K_f)=\#(E_{A^!})$ by Proposition \ref{prop.fgE}.

\begin{table}[h] 
\begin{center}
\begin{tabular}{ccccccccc}  
{\small$\#(E_A) = 1$} & & {\small$\#(E_A) = 2$} && {\small$\#(E_A) = 3$} && {\small$\#(E_A) = 4$} && {\small$\#(E_A) = 6$}  \vspace{+1mm}  \\ 

\begin{tikzpicture}[x=10, y=10]
\draw[white] (0, -2) -- (0, 2) ;
\draw[dashed] (-2, 0) -- (2, 0) ; 
\draw[->] (0,0) arc (90:400:1.5mm);

\draw [fill=white] (0,0) circle[radius= 0.13 em]; 
    
     
\end{tikzpicture}
& &
\begin{tikzpicture}[x=10, y=10]
\draw[dashed] (-1, 0) -- (3, 0) ; 
\draw[dashed] (0, -2) -- (0, 2) ;
\fill (0, 0) circle (1.5pt) ;
\fill (1.5, 0) circle (1.5pt) ;
     \draw[<->]
      (0,-0.15)
      to[out=-75, in=-120] (1.5,-0.15);
\end{tikzpicture} 

&&

\begin{tikzpicture}[x=10, y=10]
\draw[white] (0, -2) -- (0, 2) ;
\draw[dashed] (-1.5, -0.5) -- (3, 1) ;
\draw[dashed] (-1.5, 0.5) -- (3, -1) ;

\fill (2, 2/3) circle (1.5pt) ;
\fill (2, -2/3) circle (1.5pt) ;
\draw[->] (0,0) arc (90:400:1.5mm);
\draw[<->]
      (2.15,2/3)
      to[out=-10, in=30] (2.15,-2/3);
\draw [fill=white] (0,0) circle[radius= 0.13 em]; 
\end{tikzpicture} 
&&
\begin{tikzpicture}[x=10, y=10]
\draw[white] (0, -2) -- (0, 1.2) ;
\draw[dashed] (-1.5, 0) -- (3.7, 0) ; 
\draw[dashed] (-1.4, -0.8) -- (0.35,2.7) ; 
\draw[dashed] (-0.45, 2.6) -- (9/4, -1) ;
\fill (-1, 0) circle (1.5pt) ;
\fill (0, 2) circle (1.5pt) ;
\fill (3/2, 0) circle (1.5pt) ;
\fill (3, 0) circle (1.5pt) ;
\draw[<->]
      (-1+0.08, -0.15-0.02)
      to[out=-30, in=-150] (3/2-0.1, -0.15);
      \draw[<->]
      (0.15+0.03, 2)
      to[out=-13, in=130] (3-0.03, 0.13+0.05);
\end{tikzpicture}
&&
\begin{tikzpicture}[x=10, y=10]
\draw[white] (0, -2) -- (0, 1.2) ;
\draw[dashed] (-3.5, 0) -- (0.8, 0) ; 
\draw[dashed] (-2.6, -3.9) -- (0.4, 3/5) ; 
\draw[dashed] (-1.4, 0.6) -- (-2.2, -4.2) ; 
\draw[dashed] (-3.5, 0.375) -- (-0.2, -2.1) ; 
\fill (0, 0) circle (1.5pt) ; 
\fill (-3, 0) circle (1.5pt) ;
\fill (-1.5, 0) circle (1.5pt) ;
\fill (-1, -3/2) circle (1.5pt) ;
\fill (-2, -3) circle (1.5pt) ;
\fill (-5/3, -1) circle (1.5pt) ;
\draw[<->]
      (-0.15, -0.11)
      to[out=-150, in=30] (-5/3+0.16, -0.92);
\draw[<->]
      (-3.08, -0.18)
      to[out=-110, in=150] (-2.17, -2.9);
\draw[<->]
      (-1.4, -0.15)
      to[out=-60, in=100] (-1, -1.32);
\end{tikzpicture}
\end{tabular}
\end{center}
\end{table}

\section{Classification of $\Projn A$.}  

In this section, we classify noncommutative conics $\Projn A$ up to isomorphism of noncommutative schemes.


\begin{definition} [\cite {AZ}] A {\it noncommutative scheme} is a pair $X=(\mod X, \cO_X)$ consisting of an abelian category $\mod X$ and an object $\cO_X\in \mod X$.  We say that two noncommutative schemes $X, Y$ are isomorphic, denoted by $X\cong Y$, if there exists an equivalence functor $F:\mod X\to \mod Y$ such that $F(\cO_X)\cong \cO_Y$.
	
	For a right noetherian ring $R$, we define the {\it noncommutative affine scheme} associated to $R$ by $\Specn R=(\mod R, R)$ where $\mod R$ is the category of finitely generated right $R$-modules.  
	
	For a right noetherian connected graded algebra $A$, we define the {\it noncommutative projective scheme} associated to $A$ by $\Projn A=(\tails A, \cA)$ where $\tors A$ is the full subcategory of $\grmod A$ consisting of finite dimensional modules over $k$ and $\tails A:=\grmod A/\tors A$ is the quotient category.  Here, we denote by $\cM\in \tails A$ when we view $M\in \grmod A$ as an object in $\tails A$.
\end{definition}  

Let $A$ be a right noetherian connected graded algebra.   For $\mathcal{M}, \mathcal{N} \in \tails A$, we write $\Ext_{\cA}^i(\cM, \cN):=\Ext^{i}_{\tails A}(\mathcal{M},\mathcal{N})$
for the extension groups in $\tails A$.

\begin{lemma} \label{lem.spr} 
	For right noetherian rings $R, R'$, $\Specn R\cong \Specn R'$ if and only if $R\cong R'$. 
\end{lemma}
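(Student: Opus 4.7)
The ``if'' direction is immediate: any ring isomorphism $\varphi: R \to R'$ induces an equivalence $\mod R \to \mod R'$ (restriction of scalars along $\varphi^{-1}$) which sends $R$ to $R'$, so $\Specn R \cong \Specn R'$.

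For the ``only if'' direction, the plan is to recover each ring from its noncommutative affine scheme using the standard identification $R \cong \End_R(R_R)$. Explicitly, the ring homomorphism
$$R \longrightarrow \End_R(R_R), \qquad r \longmapsto (s \mapsto rs)$$
is an isomorphism of rings (the inverse sends $\varphi$ to $\varphi(1)$). Suppose we are given an equivalence of abelian categories $F: \mod R \to \mod R'$ together with an isomorphism $F(R) \cong R'$ in $\mod R'$. Since any equivalence of categories induces a ring isomorphism on endomorphism rings, $F$ gives an isomorphism $\End_R(R_R) \cong \End_{R'}(F(R_R))$, and the isomorphism $F(R) \cong R'$ in $\mod R'$ yields $\End_{R'}(F(R_R)) \cong \End_{R'}(R'_{R'})$. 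Chaining these together,
$$R \;\cong\; \End_R(R_R) \;\cong\; \End_{R'}(F(R_R)) \;\cong\; \End_{R'}(R'_{R'}) \;\cong\; R',$$
so $R \cong R'$ as rings.

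There is no real obstacle here: the statement is essentially the observation that the datum $(\mod R, R)$ remembers the ring $R$ as the endomorphism ring of the distinguished object, which is a special case of the Morita-type reconstruction theorem applied to the progenerator $R_R$. The only point requiring a tiny bit of care is fixing the convention so that $r \mapsto L_r$ (left multiplication) is a ring homomorphism rather than an anti-homomorphism, which is handled above by viewing $R$ as a \emph{right} module over itself.
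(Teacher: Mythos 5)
Your proof is correct and follows exactly the same route as the paper's: both directions reduce to the chain $R\cong \End_R(R_R)\cong \End_{R'}(F(R))\cong \End_{R'}(R'_{R'})\cong R'$, using that an equivalence induces isomorphisms of endomorphism rings. The only difference is that you spell out the easy converse and the convention ensuring $r\mapsto L_r$ is a ring homomorphism, which the paper leaves implicit.
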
 

\begin{proof} If $\Specn R\cong \Specn R'$, then there exists an equivalence functor $F:\mod R\to \mod R'$ such that $F(R)\cong R'$, so 
	$$R'\cong \End_{R'}(R')\cong \Hom_{R'}(F(R), F(R))\cong \Hom_R(R, R)\cong \End_R(R)\cong R.$$
	The converse is clear.  
\end{proof} 

Let $A, A'$ be noncommutative quadric hypersurfaces.  Despite the notation, it is not trivial to see if $A\cong A'$ implies $C(A)\cong C(A')$ from the definition.  We will show it using the notion of noncommutative schemes.  

\begin{lemma} \label{lem.cpn} 
	Let $A, A'$ be noncommutative quadric hypersurfaces.  
	\begin{enumerate}
		\item{} $\Projn A^!\cong \Projn {A'}^!$ if and only if $C(A)\cong C(A')$.
		\item{} If $\grmod A\cong \grmod A'$, then $C(A)\cong C(A')$.
	\end{enumerate}  
\end{lemma}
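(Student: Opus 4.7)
The plan is to deduce both parts from an identification $\Projn A^!\cong \Specn C(A)$. To set this up, observe that the central element $f^!\in Z(A^!)_2$ is regular in $A^!$ (a standard fact from Hilbert series, using $\dim_k S^!<\infty$), with finite dimensional quotient $A^!/(f^!)\cong S^!$. Consequently, the finite dimensional graded $A^!$-modules coincide with the $f^!$-power-torsion modules, so the localization functor $M\mapsto M[(f^!)^{-1}]$ induces an equivalence $\tails A^!\xrightarrow{\sim}\grmod R$ where $R:=A^![(f^!)^{-1}]$. Because $f^!$ is invertible of positive degree in $R$, the algebra $R$ is strongly $\ZZ$-graded, and the degree-zero functor $M\mapsto M_0$ gives a further equivalence $\grmod R\xrightarrow{\sim}\mod R_0=\mod C(A)$, sending $R$ to $C(A)$. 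Composing these and tracking the structure sheaf $\cA^!$, we obtain $\Projn A^!\cong \Specn C(A)$. Part (1) follows immediately from Lemma \ref{lem.spr}.

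For part (2), the plan is to reduce to part (1) by producing an isomorphism $\Projn A^!\cong \Projn (A')^!$ from the given equivalence $\grmod A\cong \grmod A'$. The key tool is Zhang's twisting theorem: since $A$ and $A'$ are connected graded noetherian algebras generated in degree $1$, such an equivalence (which commutes with the degree shift, as is automatic in this setting) forces an isomorphism $A'\cong A^\tau$ where $A^\tau$ denotes a Zhang twist of $A$ by some twisting system $\tau$. Both $A$ and $A'$ being Koszul as quotients of Koszul quantum polynomial algebras by a normal regular degree-$2$ element, quadratic duality intertwines Zhang twists, so $(A^\tau)^!\cong (A^!)^{\tau^*}$ for an induced twisting system $\tau^*$ on $A^!$. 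Since Zhang-twist-equivalent algebras have equivalent $\grmod$ (and therefore equivalent $\tails$ with corresponding distinguished structure sheaves), we conclude $\Projn A^!\cong \Projn (A')^!$, and part (1) completes the proof.

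The main obstacle I expect is the compatibility in part (2): verifying that quadratic duality transports a Zhang twist on $A$ to a Zhang twist on $A^!$, and, crucially, that under this identification the distinguished central element $f^!\in A^!_2$ corresponds, up to a nonzero scalar, to $(f')^!\in (A')^!_2$, so that the induced equivalence $\tails A^!\cong \tails (A')^!$ genuinely matches the structure sheaves on both sides. These compatibilities are standard for quadratic Koszul algebras (cf.\ Polishchuk--Positselski), but they constitute the technical heart of the argument; the remainder of the proof is essentially the formal localization-and-strong-grading dictionary of part (1).
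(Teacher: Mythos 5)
Your proposal is correct and follows essentially the same route as the paper: part (1) is the identification $\Projn A^!\cong \Specn C(A)$ (which the paper obtains by citing the equivalence $\tails A^!\cong \mod C(A)$, $\cM\mapsto M[(f^!)^{-1}]_0$, from Mori--Ueyama) followed by Lemma \ref{lem.spr}, and part (2) transfers the $\grmod$-equivalence through quadratic duality (the paper cites \cite[Lemma 4.1]{MU1}, whose content is exactly the Zhang-twist compatibility you sketch) and then applies part (1). The only remark worth making is that the compatibility of $f^!$ with $(f')^!$ that you flag as the technical heart is not actually needed: the Zhang-twist equivalence already sends the free module to the free module, hence matches the structure sheaves of $\Projn A^!$ and $\Projn (A')^!$, and part (1) then yields $C(A)\cong C(A')$ without any reference to the central elements.
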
 

\begin{proof} (1) Since there exists an equivalence functor $F:\tails A^!\to \mod C(A)$ defined by $F(\cM)=M[(f^!)^{-1}]_0$ by \cite[Lemma 4,13]{MU2}, $F(\cA)\cong C(A)$, so $\Projn A^!\cong \Specn C(A)$.  It follows that $\Projn A^!\cong \Projn {A'}^!$ if and only if $\Specn C(A)\cong \Specn C(A')$ if and only if $C(A)\cong C(A')$ by Lemma \ref{lem.spr}.
	
	(2) By \cite[Lemma 4.1]{MU1} and (1), 
	\begin{align*}
	\grmod A\cong \grmod A' & \Leftrightarrow \grmod A^!\cong \grmod {A'}^! \\
	& \Rightarrow \Projn A^!\cong \Projn {A'}^! \\
	& \Leftrightarrow C(A)\cong C(A').  
	\end{align*}
\end{proof}

\begin{definition}  A connected graded algebra $A$ is called an {\it AS-Gorenstein algebra of dimension $d$ and of Gorenstein parameter $l$}  if 
\begin{enumerate}
\item{} $\operatorname{id} A=d$, and
\item{} $\Ext^i_A(k, A(-j))\cong \begin{cases}  k & \textnormal { if $i=d$ and $j=l$, } \\
0 & \textnormal { otherwise. } \end{cases}$ 
\end{enumerate}
\end{definition} 

\begin{lemma} \label{lem.asg} If $S$ is a quantum polynomial algebra of dimension $d\geq 1$, and $0\neq f\in S_m$ is a normal element, then $A=S/(f)$ is a noetherian AS-Gorenstein algebra of dimension $d-1$ and of Gorenstein parameter $d-m$ (cf. \cite[Lemma 1.2]{HY1}). 
\end{lemma}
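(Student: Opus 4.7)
The plan is to deduce the AS-Gorenstein property of $A$ from the AS-regularity of $S$ via the short exact sequence
\begin{equation*}
0 \to S(-m) \xrightarrow{\,\cdot f\,} S \to A \to 0,
\end{equation*}
which is a length-one free resolution of $A$ as a right $S$-module because $f$ is regular normal. Noetherianity of $A$ is immediate, $A$ being a quotient of the noetherian ring $S$. Note also that $S$ is AS-regular of dimension $d$ and Gorenstein parameter $d$ by the definition of a quantum polynomial algebra.

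First I compute $\RHom_S(A,S)$ by applying $\Hom_S(-,S)$ to the resolution. The induced map $S \to S(m)$ is multiplication by $f$; it is injective by regularity, and its cokernel is $S(m)/Sf(m) \cong A(m)$, using the normality identity $Sf=fS$. Thus
\begin{equation*}
\RHom_S(A,S) \cong A(m)[-1]
\end{equation*}
as an object of the derived category of right $A$-modules (the $A$-action coming from the first argument of $\Hom_S(A,-)$).

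Next I invoke the standard restriction/coinduction adjunction $\RHom_S(M,N) \cong \RHom_A(M,\RHom_S(A,N))$ for an $A$-module $M$ and an $S$-module $N$, applied to $M=k$ and $N=S$. Since $\RHom_S(k,S) \cong k(d)[-d]$, this gives
\begin{equation*}
k(d)[-d] \cong \RHom_A(k,\, A(m)[-1]) \cong \RHom_A(k,A)(m)[-1],
\end{equation*}
so $\RHom_A(k,A) \cong k(d-m)[-(d-1)]$. Unpacking, $\Ext^i_A(k,A(-j)) \cong k$ if and only if $i=d-1$ and $j=d-m$, and vanishes otherwise, which is the required Gorenstein condition with dimension $d-1$ and Gorenstein parameter $d-m$.

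The remaining point is to upgrade the vanishing $\Ext^i_A(k,A)=0$ for $i>d-1$ to the injective-dimension bound $\operatorname{injdim}_A A \leq d-1$; combined with $\Ext^{d-1}_A(k,A)\neq 0$ this yields equality. For this I would cite the standard result for noetherian connected graded algebras that $\operatorname{injdim}_A A$ equals the top nonvanishing degree of $\Ext^i_A(k,A)$ whenever the latter is finite (a consequence of local duality, going back to Yekutieli and Zhang). The main subtlety I anticipate is a bookkeeping one: tracking the $A$-module structure on $\RHom_S(A,S)\cong A(m)[-1]$ carefully enough that the adjunction produces an isomorphism of $A$-module complexes rather than a mere $S$-module one — this is where normality of $f$ (which makes $A(m)$ genuinely an $A$-bimodule) is used essentially.
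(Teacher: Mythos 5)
Your proof is correct. The paper itself gives no argument for this lemma — it simply defers to the citation of \cite[Lemma 1.2]{HY1} — and what you have written is precisely the standard ``Rees lemma'' proof that such references use: resolve $A$ by $0 \to S(-m) \to S \to A \to 0$, compute $\RHom_S(A,S)\cong A(m)[-1]$, and transport the Gorenstein condition of $S$ through the coinduction adjunction. Two small remarks. First, the statement only hypothesizes that $f$ is \emph{normal}, while your argument needs $f$ to be \emph{regular}; this is harmless here because a quantum polynomial algebra is a domain (and in the paper's applications $f$ is explicitly regular normal, or a nonzero central element), but it is worth saying explicitly that regularity is being used. Second, your worry about the $A$-module structure on $\Ext^1_S(A,S)$ is well placed: in general it is the twist ${}^{\nu}A(m)$ by the normalizing automorphism $\nu$ with $fs=\nu(s)f$, but since $\nu$ fixes $k$ this does not affect $\RHom_A(k,-)$, so your computation of $\Ext^i_A(k,A(-j))$ goes through unchanged. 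The final step — upgrading $\Ext^{>d-1}_A(k,A)=0$ to $\operatorname{injdim}A=d-1$ — does require the cited local-duality-type result for noetherian connected graded algebras, which is exactly how the standard treatments close the argument.
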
  


\begin{definition} 
	Let $A$ be a right noetherian AS-Gorenstein algebra.  
	We say that $M \in \grmod A$ is {\it maximal Cohen-Macaulay}
	if $\Ext^{i}_{A}(M,A(j))=0$ for all $i \neq 0$ and all $j\in \ZZ$.
	We denote by 
	\begin{enumerate}
		\item{} $\CM^{\ZZ}(A)$ the full subcategory of $\grmod A$ consisting of maximal Cohen-Macaulay graded right $A$-modules, 
		\item{} $\CM^0(A):=\{M\in \CM^{\ZZ}(A)\mid M=M_0A\}$ the full subcategory of $\CM^{\ZZ}(A)$, and    
		\item{} $\uCM^{\ZZ}(A)$ the stable category of $\CM^{\ZZ}(A)$ by projectives. 
	\end{enumerate}
\end{definition} 

For a noncommutative conic $A$, we have a complete classification of $C(A)$ up to isomorphism by Theorem \ref{thm.main2}, so 
the following theorem gives a complete classification of $\uCM^{\ZZ}(A)$ up to equivalence. 

\begin{theorem} \label{thm.mcc}
Let $S, S'$ be 3-dimensional Calabi-Yau quantum polynomial algebras, $0\neq f\in Z(S)_2, 0\neq f'\in Z(S')_2$, and $A=S/(f), A'=S'/(f')$.
Then the following are equivalent: 
\begin{enumerate}
\item{} $\uCM^{\ZZ}(A)\cong \uCM^{\ZZ}(A')$. 
\item{} $\sD^b(\mod C(A))\cong \sD^b(\mod C(A'))$.  
\item{} $C(A)\cong C(A')$.  
\end{enumerate}
\end{theorem}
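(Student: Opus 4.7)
The implication $(3) \Rightarrow (2)$ is immediate: an algebra isomorphism $C(A) \cong C(A')$ induces an equivalence $\mod C(A) \cong \mod C(A')$, hence a triangle equivalence of the bounded derived categories. The equivalence $(1) \Leftrightarrow (2)$ follows by applying Theorem \ref{thm.SV} to both $A$ and $A'$, which yields triangle equivalences $\uCM^{\ZZ}(A) \cong \sD^b(\mod C(A))$ and $\uCM^{\ZZ}(A') \cong \sD^b(\mod C(A'))$. The substantive content of the theorem is therefore the implication $(2) \Rightarrow (3)$.

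My plan for $(2) \Rightarrow (3)$ is to combine the complete classification of $C(A)$ obtained in Theorem \ref{thm.main2} with two standard derived invariants. By that theorem, $C(A)$ belongs to one of exactly nine isomorphism classes of finite-dimensional $k$-algebras (three from Table $3$ and six from Table $4$), so it suffices to show that no two distinct algebras on this list are derived equivalent. The invariants I would invoke are the number of isomorphism classes of simple $B$-modules, equivalently the rank of $K_0(\mod B)$, together with the center $Z(B)$ regarded as a $k$-algebra; the latter is preserved under derived equivalence by a theorem of Rickard. For each of the nine algebras I would tabulate this pair and verify it is distinct in every case. The six commutative algebras in Table $4$ coincide with their own centers and are pairwise non-isomorphic as rings (distinguishable, e.g., by the number of indecomposable ring-theoretic factors and Loewy length). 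The algebra $k_{-1}[u,v]/(u^2,v^2)$ from Table $3$ has center $k[z]/(z^2)$ with $z=uv$, which is isomorphic to no other center on the list. Finally, $k_{-1}[u,v]/(u^2,v^2-1)$ and $M_2(k)$ both have center $k$, but are distinguished by their numbers of simples, which are $2$ and $1$ respectively.

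The main technical step is the explicit computation of the centers of the two non-commutative algebras of Table $3$. A direct calculation in the skew polynomial ring $k_{-1}[u,v]$, using only the defining relations $vu=-uv$, $u^2=0$, and $v^2=0$ (respectively $v^2=1$), shows that $Z(k_{-1}[u,v]/(u^2,v^2))$ is spanned by $1$ and $uv$, and that $Z(k_{-1}[u,v]/(u^2,v^2-1))=k$. Once these centers are in hand, the injectivity of the pairing (number of simples, center) on the nine-element list is a matter of inspection, and $(2) \Rightarrow (3)$ follows at once.
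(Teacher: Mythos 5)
Your proposal is correct and follows essentially the same route as the paper: reduce to $(2)\Rightarrow(3)$ via Theorem \ref{thm.SV}, then run through the nine-algebra classification of Theorem \ref{thm.main2} using the center (invariant under derived equivalence by Rickard) together with one auxiliary derived invariant, and your center computations $Z(k_{-1}[u,v]/(u^2,v^2))\cong k[t]/(t^2)$ and $Z(k_{-1}[u,v]/(u^2,v^2-1))\cong k$ match the paper's. The only cosmetic difference is that you distinguish $M_2(k)$ from $k_{-1}[u,v]/(u^2,v^2-1)$ by the number of simples where the paper uses preservation of semisimplicity; both work.
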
 

\begin{proof} By Theorem \ref{thm.SV}, (1) $\Leftrightarrow$ (2), and clearly, (3) $\Rightarrow $ (2), so it is enough to show  (2) $\Rightarrow $ (3).   Suppose that $\sD^b(\mod C(A))\cong \sD^b(\mod C(A'))$.  By \cite[Proposition 9.2]{R}, 
$Z(C(A))\cong Z(C(A'))$.  
Moreover, $C(A)$ is semisimple if and only if $C(A')$ is semisimple, 
so we may assume one of the following two cases, namely, both $C(A)$ and $C(A')$ are semisimple ($M_2(k)$ or $k^4$), or both $C(A)$ and $C(A')$ are non semisimple.  By the classification of Theorem \ref{thm.main2}, we can show  in either case that $Z(C(A))\cong Z(C(A'))$ implies $C(A)\cong C(A')$
(Most of $C(A)$ are commutative, so it is enough to calculate $Z(k_{-1}[u, v]/(u^2, v^2))\cong k[t]/(t^2)$ and $Z(k_{-1}[u, v]/(u^2, v^2-1))\cong k$!!), so the result follows. 
\end{proof}

\begin{definition}
	Let $\mathscr{T}$ be a triangulated category.  A strictly full triangulated
	subcategory $\mathscr{E}$ of $\mathscr{T}$ is called {\it admissible} if the embedding functor $\mathscr{E}\to \mathscr{T}$ has both a left adjoint functor $\mathscr{T}\to \mathscr{E}$ and a right adjoint functor $\mathscr{T}\to \mathscr{E}$. 
	
	A {\it semi-orthogonal decomposition} of $\mathscr{T}$ is a sequence
	$\{ \mathscr{E}_{0}, \cdots, \mathscr{E}_{l-1} \}$ of strictly full admissible triangulated
	subcategories such that
	\begin{enumerate}
		\item{} for all $0 \leq i < j \leq l-1$ and all objects $E_{i} \in \mathscr{E}_{i}$,
		$E_{j} \in \mathscr{E}_{j}$, one has $\Hom_{\mathscr{T}}(E_{j},E_{i})=0$, and
		\item{} the smallest strictly full triangulated subcategory of $\mathscr{T}$
		containing $\mathscr{E}_{0}, \cdots, \mathscr{E}_{l-1}$ coincides with $\mathscr{T}$.
	\end{enumerate}
\end{definition}

We use the notation $\mathscr{T}=\<\mathscr{E}_{0}, \cdots, \mathscr{E}_{l-1}\>$
for a semi-orthogonal decomposition of $\mathscr{T}$ with components
$\mathscr{E}_{0}, \cdots, \mathscr{E}_{l-1}$.
		
\begin{theorem} [{\cite[Theorem 4.4.1]{B},\cite[Theorem 2.5]{O}}] \label{thm.Or}
Let $A$ be a right noetherian AS-Gorenstein algebra of Gorenstein parameter $l$.
If $l>0$, then there exists a fully faithful functor
$\Phi: \uCM^{\ZZ}(A) \to \sD^{b}(\tails A)$ and a semi-orthogonal decomposition
$$
\sD^{b}(\tails A)=\< \cA(-l+1), \cdots, \cA(-1), \cA, \Phi \uCM^{\ZZ}(A) \>
$$
where $\cA(i)$ denotes the full triangulated subcategory generated by the object
$\cA(i)$ by abuse of notation.
\end{theorem}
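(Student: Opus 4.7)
The plan is to follow the strategy of Buchweitz and Orlov: relate $\sD^b(\tails A)$ to $\sD^b(\grmod A)$ through the localization triangle $\sD^b_{\tors}(\grmod A)\to\sD^b(\grmod A)\to\sD^b(\tails A)$, and then exploit the AS-Gorenstein condition, which identifies $\uCM^{\ZZ}(A)$ with the quotient of $\sD^b(\grmod A)$ by the thick subcategory generated by $\{A(n)\}_{n\in\ZZ}$. Concretely, for a maximal Cohen-Macaulay module $M$ without graded projective summands I would define $\Phi(M):=\pi(M)$, where $\pi:\grmod A\to\tails A$ is the Serre quotient, and extend $\Phi$ to morphisms via the universal property of the stable category; well-definedness then reduces to showing that $\pi$ sends graded projective summands into the exceptional window $\langle\cA(-l+1),\dots,\cA\rangle$ which is to be split off.

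Next I would verify the semi-orthogonality in two parts. For the leading collection $\cA(-l+1),\dots,\cA$, the required vanishing $\Ext^n_{\cA}(\cA(-i),\cA(-j))=0$ for $j<i$ and all $n$ reduces, via the local cohomology comparison $R\Gamma_{\fm}\to\id\to R\pi_*\pi$, to showing $\Ext^*_{\grmod A}(A,A(m))=0$ for $m<0$, which holds because $A$ is projective over itself and $A_m=0$ for $m<0$. For the right-orthogonality of $\Phi\uCM^{\ZZ}(A)$ against each $\cA(-k)$ with $0\leq k\leq l-1$, I would compute $\Ext^n_{\cA}(\pi M,\cA(-k))$ through the same comparison applied to $\Ext^n_{\grmod A}(M,A(-k))$ and its local cohomology correction; Cohen-Macaulayness of $M$ kills all $\Ext^{>0}_A(M,A(-k))$, while the AS-Gorenstein hypothesis with parameter $l$ (in dimension $d-1$) kills the local cohomology contribution precisely in the specified window $0\leq k\leq l-1$.

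Fully faithfulness of $\Phi$ is the subtlest step: one must show $\Hom_{\tails A}(\pi M,\pi N[n])\cong\Hom_{\uCM^{\ZZ}(A)}(M,N[n])$ for $M,N\in\CM^{\ZZ}(A)$, which again reduces via the local cohomology triangle to controlling an error term supported on $R\Gamma_{\fm}(N)$; the AS-Gorenstein vanishing of $\Ext^{\neq d-1}_A(k,A)$ together with the Cohen-Macaulayness of $M$ cuts this correction down to precisely the morphisms that factor through a graded projective, which are exactly those killed in passing to $\uCM^{\ZZ}(A)$. Generation is then handled by d\'evissage: since $\{\cA(n)\}_{n\in\ZZ}$ generates $\sD^b(\tails A)$, iteratively CM-approximating $\cA(n)$ for $n\geq 1$ (and dually for $n\leq -l$) writes every twist outside the window $[-l+1,0]$ as a triangle involving a CM object and a member of the window, so the listed components together generate. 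The main obstacle is the delicate bookkeeping between positive and negative graded shifts at the boundary of the window, where AS-Gorensteinness of parameter exactly $l$ is required to match the two regimes --- this is precisely where the assumption $l>0$ enters, ensuring both that the window $\cA(-l+1),\dots,\cA$ is non-empty and that the CM part has enough ``room'' below degree zero to embed faithfully without colliding with the twists that have been split off.
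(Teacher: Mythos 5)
First, note that the paper itself does not prove this statement at all: Theorem \ref{thm.Or} is quoted from Buchweitz \cite{B} and Orlov \cite{O}, so your attempt can only be measured against Orlov's actual argument.

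Your construction of $\Phi$ breaks at the very first step, and this is a genuine gap rather than a technicality. Defining $\Phi(M):=\pi(M)$ and ``extending to morphisms via the universal property of the stable category'' requires that $\pi$ annihilate every morphism of maximal Cohen--Macaulay modules that factors through a graded projective, and this is false. Take, inside the setting of this very paper, $A=k[x,y,z]/(x^2+y^2)$, $g=x+\sqrt{-1}y$, $h=x-\sqrt{-1}y$, and $M=A/gA\in\CM^{\ZZ}(A)$ (which has no projective summands). Since $hg=0$ in $A$, right multiplication by $h$ gives a well-defined nonzero morphism $M\to M(1)$ which factors as $M\xrightarrow{\cdot h}A(1)\twoheadrightarrow M(1)$, hence is zero in $\uCM^{\ZZ}(A)$; but $hA\cap gA=hgA=0$ shows its image is $(hA+gA)(1)/gA(1)\cong hA(1)\cong M$, which is infinite dimensional, so the induced morphism $\cM\to\cM(1)$ in $\tails A$ is nonzero. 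Thus $\pi$ does not descend to the stable category and your $\Phi$ is not a functor. Your proposed remedy --- that $\pi$ sends projective summands ``into the window'' --- does not address this: $\sD^b(\tails A)$ is not a quotient by the window, so a morphism factoring through a window object need not vanish, and in any case $\cA(j)$ lies in the subcategory $\<\cA(-l+1),\dots,\cA\>$ only for $-l+1\leq j\leq 0$. The same defect undermines your full-faithfulness and generation steps, which presuppose that stable Hom's are computed by $\pi$.

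This is precisely the difficulty Orlov's proof is built to overcome: he works with the subcategories $\sD^{b}(\grmod_{\geq i}A)$ of $\sD^b(\grmod A)$ and compares two semiorthogonal decompositions of each --- one whose components are the torsion part and a copy of $\sD^b(\tails A)$, the other whose components are the projectives $A(-j)$, $j\geq i$, and a copy of $\uCM^{\ZZ}(A)$. The functor $\Phi$ is the inclusion of the latter component followed by $\pi$; in other words, each MCM module must first be replaced by a representative lying in the orthogonal of the relevant projectives, and only for such representatives does $\pi$ compute stable morphisms. Indeed $\Phi(M)\cong\pi M$ only under extra hypotheses such as $\Hom_A(M,A(i))=0$ for all $i\leq 0$; this is exactly why the present paper proves Lemma \ref{lem.core}(2) and invokes \cite[Lemma 2.6]{U} before identifying $\Phi(X_i)$ with $\cX_i$ in its last theorem. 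A secondary inaccuracy in your sketch: the orthogonality $\Ext^q_{\cA}(\cA(a),\cA(b))=0$ for $-l+1\leq b<a\leq 0$ and all $q$ does not ``reduce to $A_m=0$ for $m<0$''; the local cohomology $H^{\bullet}_{\fm}(A)$ also contributes to these groups, and its vanishing in internal degrees greater than $-l$ (a consequence of Gorenstein parameter $l$) is exactly what makes a window of width $l$, and no wider, work.
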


\begin{theorem} \label{thm.GA} 
Let $S, S'$ be 3-dimensional Calabi-Yau quantum polynomial algebras, $0\neq f\in Z(S)_2, 0\neq f'\in Z(S')_2$, and $A=S/(f), A'=S'/(f')$.  If $\Projn A\cong \Projn A'$, then $C(A)\cong C({A'})$. 
\end{theorem}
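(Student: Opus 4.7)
The plan is to reduce the statement to Theorem \ref{thm.mcc} by showing that $\Projn A \cong \Projn A'$ already implies $\uCM^{\ZZ}(A) \cong \uCM^{\ZZ}(A')$ as triangulated categories, from which $C(A) \cong C(A')$ follows immediately. The bridge between $\Projn A$ and $\uCM^{\ZZ}(A)$ will be the Buchweitz--Orlov-type semi-orthogonal decomposition recorded as Theorem \ref{thm.Or}.

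First I would compute the Gorenstein parameter. By Lemma \ref{lem.asg} applied to the 3-dimensional quantum polynomial algebra $S$ and the degree-2 regular normal element $f$, the algebra $A = S/(f)$ is a noetherian AS-Gorenstein algebra of dimension $d-1 = 2$ and Gorenstein parameter $l = d-m = 3-2 = 1 > 0$. Because $l = 1$, Theorem \ref{thm.Or} collapses to
\[
\sD^{b}(\tails A) = \langle \cA,\ \Phi\,\uCM^{\ZZ}(A)\rangle,
\]
and likewise $\sD^{b}(\tails A') = \langle \cA',\ \Phi'\,\uCM^{\ZZ}(A')\rangle$. In particular $\Phi\,\uCM^{\ZZ}(A)$ is characterized intrinsically as the right orthogonal $\cA^{\perp}$ inside $\sD^{b}(\tails A)$.

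Next, suppose $\Projn A \cong \Projn A'$. By definition this gives an equivalence of abelian categories $F : \tails A \to \tails A'$ with $F(\cA) \cong \cA'$. Passing to bounded derived categories produces a triangulated equivalence $\sD^{b}(F) : \sD^{b}(\tails A) \to \sD^{b}(\tails A')$ sending the subcategory generated by $\cA$ to the subcategory generated by $\cA'$. Since right orthogonals are preserved by triangulated equivalences, $\sD^{b}(F)$ restricts to an equivalence
\[
\cA^{\perp} \xrightarrow{\ \sim\ } (\cA')^{\perp},
\]
i.e. an equivalence $\Phi\,\uCM^{\ZZ}(A) \cong \Phi'\,\uCM^{\ZZ}(A')$. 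Because $\Phi$ and $\Phi'$ are fully faithful by Theorem \ref{thm.Or}, this yields a triangulated equivalence $\uCM^{\ZZ}(A) \cong \uCM^{\ZZ}(A')$.

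Finally, I would invoke Theorem \ref{thm.mcc}: the equivalence $\uCM^{\ZZ}(A) \cong \uCM^{\ZZ}(A')$ is precisely condition (1) there, which is equivalent to condition (3), $C(A) \cong C(A')$. The only step that requires genuine care is verifying that the equivalence $F : \tails A \to \tails A'$ with $F(\cA) \cong \cA'$ truly carries the semi-orthogonal decomposition of $\sD^b(\tails A)$ to that of $\sD^b(\tails A')$ — that is, that the component $\cA$ (together with its $\Ext$-grading used to identify generated subcategories) is transported correctly. This is essentially formal from $F(\cA) \cong \cA'$ plus the fact that the decomposition is canonical once the structure sheaf is fixed, but it is the place where the isomorphism $F(\cO_{\Projn A}) \cong \cO_{\Projn A'}$ built into the definition of $\Projn A \cong \Projn A'$ is indispensable.
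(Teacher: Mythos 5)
Your proposal is correct and follows essentially the same route as the paper: identify the Gorenstein parameter as $1$ via Lemma \ref{lem.asg}, use the semi-orthogonal decomposition $\sD^b(\tails A)=\langle \cA, \Phi\,\uCM^{\ZZ}(A)\rangle$ of Theorem \ref{thm.Or} to characterize $\Phi\,\uCM^{\ZZ}(A)$ intrinsically as an orthogonal of $\langle\cA\rangle$, transport it along the equivalence coming from $\Projn A\cong\Projn A'$, and conclude by Theorem \ref{thm.mcc}. The only cosmetic discrepancy is that with the paper's sign convention for semi-orthogonal decompositions the second component is the \emph{left} orthogonal ${}^{\perp}\langle\cA\rangle=\{\cX\mid \Hom_{\cA}(\cX,\cA)=0\}$ rather than the right orthogonal $\cA^{\perp}$, but this does not affect the argument.
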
 

\begin{proof}
Since $A$ is a noetherian AS-Gorenstein algebra of Gorenstein parameter 1 by Lemma \ref{lem.asg}, there exists a fully faithful functor $\Phi:\uCM^{\ZZ}(A)\to \sD^b(\tails A)$ such that $\sD^b(\tails A)=\<\cA, \Phi \uCM^{\ZZ}(A)\>$ is a semi-orthogonal decomposition by Theorem \ref{thm.Or}, so  
$$\uCM^{\ZZ}(A)\cong {^{\perp}\<\cA\>}:=\{\cX\in \sD^b(\tails A)\mid \Hom_{\cA}(\cX, \cA)=0\}$$  
by \cite [Lemma 1.4]{O}.  Since $\Projn A\cong \Projn A'$, there exists an equivalence functor $F:\sD^b(\tails A)\to \sD^b(\tails A')$ such that $F(\cA)\cong \cA'$.  It follows that   
$$\sD^b(\mod C(A))\cong \uCM^{\ZZ}(A)\cong {^{\perp}\<\cA\>}\cong {^{\perp}\<\cA'\>}\cong \uCM^{\ZZ}(A')\cong \sD^b(\mod C(A')),$$
so $C(A)\cong C(A')$ by Theorem \ref{thm.mcc}.
\end{proof}

\begin{theorem} \label{thm.HMM} There are exactly 9 isomorphism classes of homogeneous coordinate algebras $A$ of noncommutative conics, and there are exactly 9 isomorphism classes of noncommutative conics $\Projn A$.  
\end{theorem} 

\begin{proof} 
If $A$ and $A'$ are homogeneous coordinate algebras of noncommutative conics, then 
$$A\cong A'\Rightarrow\Projn A\cong \Projn A'\Rightarrow C(A)\cong C(A')$$ 
by Theorem \ref{thm.GA}.   Since there are at most 9 isomorphism classes of $A$ by Corollary \ref{cor.HMM}, and there are exactly 9 isomorphism classes of $C(A)$ 
by Theorem \ref{thm.main2}, 
the result follows. 
\end{proof}


 For the rest of the paper, we study ``smooth'' noncommutative conics.  The {\it global dimension} of $\tails A$ is defined by
$$
\gldim(\tails A):=\sup \{
i \mid \Ext^{i}_{\cA}(\mathcal{M}, \mathcal{N}) \neq 0 \,\,\,{\rm for \,\,\,some\,\,\,}
\mathcal{M}, \mathcal{N} \in \tails A
\}.
$$
\begin{definition}
	A noetherian connected graded algebra $A$ is called a {\it noncommutative graded isolated singularity}
	if $\tails A$ has finite global dimension.
\end{definition}

We may define that $\Projn A$ is ``smooth'' if $A$ is a noncommutative graded isolated singularity. 

For an additive category $\sC$, we denote by $\Ind \sC$ the set of isomorphism classes of indecomposable objects.  

\begin{lemma} \label{lem.core} 
	Let $S$ be a quantum polynomial algebra, $f\in S_2$ a regular normal element, and $A=S/(f)$.  Suppose that $X=A/gA$ for some $g\in K_f$. 
	\begin{enumerate}
		\item{} $X\in \Ind \CM^0(A)$. 
		\item{} $\Hom_A(X, A(i))=0$ for every $i\leq 0$. 
	\end{enumerate}
\end{lemma}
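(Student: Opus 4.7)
The plan is to exhibit an explicit $2$-periodic projective resolution of $X$ in $\grmod A$ with differential given by multiplication by $g$, and then to read off $(1)$ and $(2)$ by direct Ext calculation. Since $g \in K_f$, we have $g^2 = \lambda f$ for some $\lambda \in k^\times$; rescaling, assume $f = g^2$. The regularity of $f$ transfers to $g$: if $gh = 0$ or $hg = 0$ for some nonzero $h \in S$, then $fh = g(gh) = 0$ or $hf = (hg)g = 0$, contradicting the regularity of $f$. Combined with $fS = Sf$ (normality) and the factorizations $g^2 s = g \cdot gs$ and $s g^2 = sg \cdot g$, this yields $fS \subseteq gS \cap Sg$.

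I then construct the resolution
\[ \cdots \to A(-2) \xrightarrow{g} A(-1) \xrightarrow{g} A \to X \to 0, \]
where each differential is left multiplication by $g$, and check exactness at each stage. An element $a \in S$ descends to the kernel of $g\,\cdot$ on $A$ iff $ga \in fS$, i.e.\ $ga = g^2 s$ for some $s \in S$; then $g(a - gs) = 0$ and regularity of $g$ forces $a \in gS$, so the kernel (inside $A$) is the image of $gS$, which also equals the image of the preceding $g\,\cdot$. Applying $\Hom_A(-, A(j))$ turns the resolution into the complex
\[ 0 \to A_j \xrightarrow{\cdot g} A_{j+1} \xrightarrow{\cdot g} A_{j+2} \to \cdots \]
with differentials given by right multiplication by $g$. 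By the mirror computation using $Sf = fS$ and $sf = (sg)g$, the kernel at degree $j+i$ lifts to $(Sg)_{j+i}$, matching the image of the previous map; hence $\Ext^i_A(X, A(j)) = 0$ for all $i \geq 1$ and $j \in \ZZ$, so $X$ is maximal Cohen-Macaulay. Since $g \in S_1$ has positive degree, $X_0 = A_0/(gA)_0 = k$ and $X = X_0 A$, so $X \in \CM^0(A)$. For indecomposability, any splitting $X = Y \oplus Z$ in $\grmod A$ with both summands nonzero would split $X_0 = k$ nontrivially, so without loss of generality $Z_0 = 0$; but then $X = X_0 A \subseteq YA = Y$ contradicts $Z \neq 0$. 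This proves $(1)$.

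For $(2)$, the $\Ext^0$ term of the Ext complex identifies $\Hom_A(X, A(i))$ with the image of $(Sg)_i$ in $A_i$. This vanishes for $i < 0$ because $A_i = 0$, and for $i = 0$ because $Sg \subseteq S_{\geq 1}$. The main obstacle is the kernel-image comparison on both sides of the Ext complex: one must use the normality (not centrality) of $f$, and the clean transfer of regularity from $f$ to $g$ giving $fS \subseteq gS \cap Sg$, so that the cancellations in both the left- and right-multiplication arguments go through symmetrically.
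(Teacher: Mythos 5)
Your proof is correct and follows essentially the same route as the paper: the same $2$-periodic free resolution of $X=A/gA$ with differential $g\cdot$, and the same identification of $\Hom_A(X,A(i))$ with the kernel of $\cdot g:A_i\to A_{i+1}$ for part (2). The only difference is that you verify exactness of the resolution, the MCM property, and indecomposability by hand, where the paper delegates these to a citation of Mori--Ueyama; your transfer of regularity from $f$ to $g$ and the inclusion $fS\subseteq gS\cap Sg$ are exactly the points needed to make that verification go through.
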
 

\begin{proof} (1) If $g^2=f$, then 
	$$\begin{CD} \cdots @>g\cdot>> A(-2) @>g\cdot>> A(-1)  @>g\cdot>> A @>g\cdot>> A(1) @>g\cdot>>\cdots \end{CD}$$
	is a complete resolution of $X$, so $X\in \Ind \CM^0(A)$ (cf. \cite[Lemma 5.7, Remark 5.5]{MU3}). 
	
	(2) The exact sequence 
	$$\begin{CD} A(-1) @>g\cdot >> A @>>> X @>>> 0 \end{CD}$$ 
	induces an exact sequence 
	$$\begin{CD} 0 @>>> \Hom_A(X, A(i)) @>>> A_i @>\cdot g>> A_{i+1}, \end{CD}$$ so $\Hom_A(X, A(i))=0$ for every $i\leq 0$. 
\end{proof} 

\begin{definition}
	Let $\mathscr{T}$ be a $k$-linear triangulated category.
	\begin{enumerate}
		\item{} An object $E$ of $\mathscr{T}$ is {\it exceptional} if
		$\End_{\mathscr{T}}(E)=k$ and $\Hom_{\mathscr{T}}(E,E[q])=0$
		for every $q \neq 0$.
		\item{} A sequence of objects $\{ E_{0}, \cdots, E_{l-1} \}$ in $\mathscr{T}$ is an
		{\it exceptional sequence} if
		$E_{i}$ is an exceptional object for every $i=0, \cdots, l-1$, and
		$\Hom_{\mathscr{T}}(E_{j},E_{i}[q])=0$ for every $q$ and every $0 \leq i < j \leq l-1$.
		\item{} An exceptional sequence $\{ E_{0}, \cdots, E_{l-1} \}$ in $\mathscr{T}$ is {\it full}
		if the smallest strictly full triangulated subcategory of $\mathscr{T}$ containing $E_{0}, \cdots, E_{l-1}$
		is equal to $\mathscr{T}$.
		\item{} An exceptional sequence $\{ E_{0}, \cdots, E_{l-1} \}$ in $\mathscr{T}$ is {\it strong}
		if $\Hom_{\mathscr{T}}(U,U[q])=0$ for every $q \neq 0$ where $U=E_{0} \oplus \cdots \oplus E_{l-1}$.
	\end{enumerate}
\end{definition}

Let $k\widetilde {A_1}$ be the path algebra of the quiver
$$
\xymatrix{
1 \ar@<0.5ex>[r] \ar@<-0.5ex>[r]& 2 & \text{($\widetilde {A_1}$ type)},
}
$$
and let $k\widetilde {D_4}$ be the path algebra of the quiver
$$
\xymatrix @R=.2pc{
1 \ar[dr]& & 2\ar[dl] &\\
 & 5 &   & \text{($\widetilde {D_4}$ type).} \\ 
3 \ar[ur]& & 4 \ar[ul] &
}
$$

\begin{theorem} Let $S$ be a 3-dimensional Calabi-Yau quantum polynomial algebra, $0\neq f\in Z(S)_2$, and $A=S/(f)$. If $A$ is a noncommutative graded isolated singularity, then exactly one of the following holds: 
	\begin{enumerate}
		\item{} The following are equivalent: 
		\begin{enumerate}
			\item[(1-a)] $C(A)\cong M_2(k)$. 
          \item[(1-b)] $E_A\subset \PP^2$ is a smooth conic.  
			\item[(1-c)] $E_{A^!}=\emptyset$. 
			\item[(1-d)] $A$ is commutative.  
			\item[(1-e)] $f$ is irreducible. 
			\item[(1-f)] $\sD^b(\tails A)\cong \sD^b(\mod k\widetilde {A_1})$.
		\end{enumerate} 
		\item{} The following are equivalent: 
		\begin{enumerate}
			\item[(2-a)] $C(A)\cong k^4$.
          \item[(2-b)] $\#(E_A)=6$.  
			\item[(2-c)] $\#(E_{A^!})=4$. 
			\item[(2-d)] $A$ is not commutative.
			\item[(2-e)] $f$ is reducible.
			\item[(2-f)] $\sD^b(\tails A)\cong \sD^b(\mod k\widetilde {D_4})$.
		\end{enumerate}
	\end{enumerate}
\end{theorem}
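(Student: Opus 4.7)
The proof will proceed by first identifying which noncommutative conics $A$ are actually graded isolated singularities, then verifying each of the two cases exhausts the possibilities, and finally establishing the derived-equivalence characterizations (1-f) and (2-f) via Orlov's theorem.

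My first goal is to show that $A$ is a noncommutative graded isolated singularity if and only if $C(A)$ is semisimple. Combining Theorem \ref{thm.SV} with Orlov's decomposition (Theorem \ref{thm.Or}, applicable with $l=1$ by Lemma \ref{lem.asg}), I obtain $\sD^b(\tails A)=\langle \cA,\Phi\uCM^{\ZZ}(A)\rangle$ and $\uCM^{\ZZ}(A)\cong \sD^b(\mod C(A))$. Since $C(A)$ is finite-dimensional, the right-hand component of the semi-orthogonal decomposition is ``smooth'' (i.e.\ admits a bounded-Ext tilting description) precisely when $\gldim C(A)<\infty$, which for finite-dimensional algebras means $C(A)$ is semisimple; combined with the trivially smooth left piece $\langle\cA\rangle$, this forces $\gldim(\tails A)<\infty$ exactly in that case. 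Comparing with Tables 3 and 4 of Theorem \ref{thm.main2}, the semisimple possibilities are exactly $C(A)\cong M_2(k)$ (in Table 3) and $C(A)\cong k^4$ (in Table 4), yielding the dichotomy of the theorem.

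Next I read off the remaining non-derived equivalences directly from the classification tables. For case (1): by Table 3, $C(A)\cong M_2(k)$ corresponds uniquely to $A\cong k[x,y,z]/(x^2+y^2+z^2)$ (via Corollary \ref{cor.cgc} and Sylvester), so $E_A\subset\PP^2$ is a smooth conic, $\#(E_{A^!})=0$, $A$ is commutative, and $f=x^2+y^2+z^2$ is a non-degenerate ternary quadratic form, hence irreducible. For case (2): by Table 4, $C(A)\cong k^4$ corresponds uniquely to the rows $\#(E_A)=6, \#(E_{A^!})=4$, arising only among the four noncommutative types (so $|\sigma|=2$), and by Proposition \ref{prop.fgE}(1) together with $K_f\neq\emptyset$ (Proposition \ref{prop.g1}) one has $f=g^2$ for some $g\in S_1$, whence $f$ is reducible in $S$.

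For the triangulated equivalences (1-f) and (2-f), I invoke Theorem \ref{thm.Or} to get the semi-orthogonal decomposition $\sD^b(\tails A)=\langle \cA,\Phi\uCM^{\ZZ}(A)\rangle$, and then exhibit explicit exceptional generators of $\Phi\uCM^{\ZZ}(A)$ using Lemma \ref{lem.core}: the modules $X_i:=A/g_iA$ attached to the distinct elements $g_1,\dots,g_m\in K_f$ (where $m=\#(K_f)=\#(E_{A^!})$) lie in $\Ind\CM^0(A)$. In case (1), $m=0$ classically but $\uCM^{\ZZ}(A)\cong\sD^b(\mod M_2(k))\cong \sD^b(\mod k)$ is generated by a single exceptional object (choosing any simple Cohen--Macaulay module coming from the Clifford picture), giving a length-2 full strong exceptional sequence; in case (2), $m=4$ and Proposition \ref{prop.fgE}(2) guarantees the four $X_i$ are pairwise Hom-orthogonal (they correspond to the four primitive idempotents of $C(A)\cong k^4$), producing a length-5 full strong exceptional sequence. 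The final step is to compute the endomorphism algebra of the tilting object $T=\cA\oplus \bigoplus_i \Phi(X_i)$: using the short exact sequences $0\to \cA(-1)\xrightarrow{g_i\cdot}\cA\to \cA/g_i\cA\to 0$ together with the cohomology of $\tails A$ (an AS-Gorenstein computation giving $\H^0(\tails A,\cA(n))=A_n$ for $n\geq 0$), one obtains $\dim_k \Hom(\cA,\Phi(X))=2$ in case (1) and $\dim_k\Hom(\cA,\Phi(X_i))=1$ with trivial morphisms among the $X_i$'s in case (2), identifying $\End(T)^{\mathrm{op}}$ with $k\widetilde{A_1}$ and $k\widetilde{D_4}$ respectively. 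By tilting theory this yields the desired derived equivalences.

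The main obstacle is the last step, the explicit endomorphism-algebra computation: one must simultaneously verify that the chosen sequence is strong (all higher Exts vanish), that the Hom-dimensions match the number of arrows of each of $\widetilde{A_1}$ and $\widetilde{D_4}$, and that the pairwise Hom-orthogonality in case (2) holds in $\tails A$ (not merely in $\uCM^{\ZZ}(A)$). The vanishing of higher Ext reduces to the maximal Cohen--Macaulay property of the $X_i$ combined with Serre-type vanishing for $\cA(n)$; the orthogonality in case (2) is where the correspondence with $\Spec C(A)\cong \{g_i^2=f\}/\{\pm 1\}$ (from Proposition \ref{prop.fgE}) becomes essential, since distinct $g_i$'s there correspond to distinct primitive idempotents of $C(A)$.
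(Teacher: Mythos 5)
Your strategy coincides with the paper's where it matters most: case (2-f) is proved exactly as in the paper, by combining Orlov's semi-orthogonal decomposition $\sD^b(\tails A)=\langle \cA, \Phi\uCM^{\ZZ}(A)\rangle$ with the four maximal Cohen--Macaulay modules $X_i=A/g_iA$ attached to $K_f$, checking that $\{\cA,\cX_1,\dots,\cX_4\}$ is a full strong exceptional sequence, and identifying the endomorphism algebra of the tilting object with $k\widetilde{D_4}$. Two of your other steps, however, do not hold up as written. First, your reduction to the dichotomy $C(A)\cong M_2(k)$ or $k^4$ rests on the claim that a finite-dimensional algebra of finite global dimension must be semisimple; this is false in general (the path algebra of the $A_2$ quiver is finite-dimensional, hereditary, and not semisimple), so ``smoothness of the SOD component iff $C(A)$ is semisimple'' is not a legitimate inference. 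The statement you actually need --- that $A$ being a graded isolated singularity forces the $4$-dimensional algebra $C(A)$ to be semisimple --- is \cite[Theorem 5.6]{SV}, which the paper cites directly; only then does inspection of Tables 3 and 4 yield the two cases $M_2(k)$ and $k^4$.

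Second, in case (1) one has $K_f=\emptyset$ (equivalently, $f$ is irreducible), so there is no $g\in S_1$ with $g^2\sim f$ and no module $A/gA$; the exact sequence $0\to\cA(-1)\to\cA\to\cA/g\cA\to 0$ that you propose to use for the computation $\dim_k\Hom(\cA,\Phi X)=2$ does not exist in this case. The exceptional generator of $\uCM^{\ZZ}(A)\cong\sD^b(\mod M_2(k))$ arises instead from a rank-two matrix factorization of $f$ (the spinor module on the smooth conic), with presentation $A(-1)^{2}\to A^{2}\to X\to 0$, and the Hom computation has to be run from that presentation. The paper sidesteps this entirely by quoting the explicit computation $\sD^b(\tails k[x,y,z]/(x^2+y^2+z^2))\cong\sD^b(\mod k\widetilde{A_1})$ from \cite[Example 3.23 (1)]{U}. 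Either fix works, but your sketch as written applies the case-(2) exact sequence where it is unavailable.
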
 

\begin{proof} Since $A$ is a noncommutative graded isolated singularity, $C(A)$ is a 4-dimensional semi-simple algebra by \cite[
Theorem 5.6]{SV}, so either $C(A)\cong M_2(k)$, or $C(A)\cong k^4$. 
	
	By Theorem \ref{thm.main2}, (1-a) $\Leftrightarrow$ (1-b) $\Leftrightarrow$ (1-c) $\Leftrightarrow$ (1-d) $\Leftrightarrow$ (1-e), and (2-a) $\Leftrightarrow$ (2-b) $\Leftrightarrow$ (2-c) $\Leftrightarrow$ (2-d) $\Leftrightarrow$ (2-e).  
	
	Since $\sD^b(\mod k\widetilde {A_1})\not \cong \sD^b(\mod k\widetilde {D_4})$, it is enough to show that (1-a) $\Rightarrow$ (1-f) and  (2-a) $\Rightarrow$ (2-f).   
	
(1-a) $\Rightarrow$ (1-f): If $C(A)\cong M_2(k)$, then $A\cong k[x, y, z]/(x^2+y^2+z^2)$ by Theorem \ref{thm.main2}, so $\sD^b(\tails A)\cong \sD^b(\mod k\widetilde A_1)$ by \cite[Example 3.23 (1)]{U}.  
	
(2-a) $\Rightarrow$ (2-f): 	If $C(A)\cong k^4$, then $\#(K_f)=4$ so that $K_f=\{g_1, g_2, g_3, g_4\}$ by Theorem \ref{thm.main2}, so $\operatorname{Ind}\uCM^0(A)\supset \{X_1, X_2, X_3, X_4\}$ where $X_i:=A/g_iA$ by Lemma \ref{lem.core}.  Since $\uCM^0(A)\cong \mod C(A)\cong \mod k^4$ by \cite[Lemma 4.13]{MU2}, 
	$\operatorname{Ind}\uCM^0(A)=\{X_1, X_2, X_3, X_4\}$. Since $\uCM^{\ZZ}(A)\cong \sD^b(\mod C(A))$ by \cite[Lemma 4.13]{MU2}, $\{X_1, X_2, X_3, X_4\}$ is a full exceptional sequence for $\uCM^{\ZZ}(A)$.  Since there exists a fully faithful functor $\Phi:\uCM^{\ZZ}(A)\to \sD^b(\tails A)$ such that $\sD^b(\tails A)=\<\cA, \Phi \uCM^{\ZZ}(A)\>$ is a semi-orthogonal decomposition by Theorem \ref{thm.Or} and $\Phi(X_i)=\cX_i$ by Lemma \ref{lem.core} and \cite [Lemma 2.6]{U}, we see that $\{\cA, \cX_1, \cX_2, \cX_3, \cX_4\}$ is a full exceptional sequence for $\sD^b(\tails A)$.  
	
	Since $A$ is a noetherian AS-Gorenstein algebra of dimension 2 and of Gorenstein parameter 1 by Lemma \ref{lem.asg} and $\gldim (\tails A)=2$ (cf. \cite[Lemma 2.2]{U}), 
	\begin{align*}
	& \Hom_{\cA}(\cA, \cA)=k \\
	& \Hom_{\cA}(\cX_i, \cX_j)=\begin{cases} k & \textnormal { if } i=j \\ 0 & \textnormal { if } i\neq j \end{cases} \\
	& \Hom_{\cA}(\cA, \cX_i)=\Hom_{A}(A, X) =X_0=k \\
	& \Hom_{\cA}(\cX_i, \cA)=\Hom_A(X, A)=0  
	\end{align*}
	and 
	\begin{align*}
	& \Ext^1_{\cA}(\cX_i, \cX_j)=0 \\
	& \Ext^1_{\cA}(\cA, \cX)=D\Hom_{A}(X, A(-1)) =0  \\
	& \Ext^1_{\cA}(\cX, \cA)=D\Hom_{A}(A, X(-1))= D(X_{-1})=0 \\
	& \Ext^q_{\cA}(\cX, \cY)=0 \; \; \;  \textnormal { if } \;   q\geq 2.  
	\end{align*} 
	for $\cX, \cY\in \{\cA, \cX_1, \cX_2, \cX_3, \cX_4\}$ by \cite[Lemma 2.3]{U} and Lemma \ref{lem.core}.   It follows that $\{\cA, \cX_1, \cX_2, \cX_3, \cX_4\}$ is a full strong exceptional sequence for $\sD^b(\tails A)$, and 
	$$\End_{\cA}(\cA\oplus \cX_1\oplus \cX_2\oplus \cX_3\oplus \cX_4)\cong k\widetilde {D_4},$$ so $\sD^b(\tails A)\cong \sD^b(\mod k\widetilde {D_4})$.   
\end{proof}



\begin{thebibliography}{99}


\bibitem{ATV}
   M. Artin, J. Tate and M. Van den Bergh, 
   Some algebras associated to automorphisms of elliptic curves, 
   The Grothendieck Festschrift, vol. 1, 
   \textit{Progress in Mathematics} vol. 86 (Birkh\"auser, Basel, 1990) 33--85. 

\bibitem{AZ}
M. Artin and J. J. Zhang, 
Noncommutative Projective Schemes,
{\it Adv. Math.} {\bf 109} (1994), 228--287.  
 
 \bibitem{B}
 R.-O. Buchweitz,
 Maximal Cohen Macaulay modules and Tate cohomology over Gorenstein rings,
 unpublished manuscript (1985).

\bibitem{Fr}
H. R. Frium, The group law on elliptic curves on Hesse form, {\it Finite fields with applications to coding theory, cryptography and related areas} (Springer-Verlag, New York, 2002) 123--151.

\bibitem{F}
W. Fulton, 
{\it Algebraic curves: An introduction to algebraic geometry},
Addison-Wesley Pub. Co., 1989.


\bibitem{Ha}
R. Hartshrone, 
{\it Algebraic geometry},
Graduate Texts in Math., No. 52, Springer-Verlag, New York-Heidelberg, 1977.

\bibitem{HP}
W. Hodge and D. Pedoe, Methods of Algebraic Geometry, Vol. II, Book IV: Quadrics and Grassmann Varieties,
Cambridge University Press, 1952.

\bibitem{HY1}
J.-W. He and Y. Ye, 
Clifford deformations of Koszul Frobenius algebras and noncommutative quadrics, 
preprint (arXiv:1905.04699). 

\bibitem{H}
H. Hu, 
Classification of noncommutative conics associated to symmetric regular superpotentials, to appear in J. Algebra Appl., doi:10.1142/S0219498823501360.

\bibitem{IM}
A. Itaba and M. Matsuno, 
AS-regularity of geometric algebras of plane cubic curves, {\it J. Aust. Math. Soc.,}  {\bf 112} (2022), Iss. 2, 193--217.

\bibitem{IMo}
A. Itaba and I. Mori, 
Quantum projective planes finite over their centers,  to appear in Can. Math. Bull., (2022), doi:10.4153/S0008439522000017.

\bibitem{Ma}
M. Matsuno,
A complete classification of $3$-dimensional quadratic AS-regular algebras of Type EC, {\it Can. Math. Bull.,} {\bf64}  (2021), Iss. 1, 123--141.


\bibitem{M}
I. Mori, 
Noncommutative projective schemes and point schemes,
{\it Algebras, rings, and their representations}, World Sci. Publ. (2006),
215--239.

\bibitem{MU1}
I. Mori and K. Ueyama, Graded Morita equivalences for geometric AS-regular algebras,
{\it Glasg. Math. J.,} {\bf 55} (2013), no. 2, 241--257.
 
\bibitem{MU3}
I. Mori and K. Ueyama, 
Noncommutative matrix factorizations with an application to skew exterior algebras, {\it J. Algebra, } {\bf586} (2021), 1053--1087.

\bibitem{MU2}
I. Mori and K. Ueyama, 
Noncommutative Kn\"orrer's periodicity theorem and noncommutative quadric hypersurfaces, to appear in Algebra Number Theory, (arXiv:1905.12266). 

\bibitem{O}
D. Orlov, 
{\it Derived categories of coherent sheaves and triangulated categories of singularities}, 
Algebra, arithmetic, and geometry: in honor of Yu. I. Manin. Vol. II, 503--531, {\it Progr. Math.}, {\bf270},  Birkhauser Boston, Inc., Boston, MA, 2009.

\bibitem{PolPos}
A. Polishchuk and L. Positselski, 
{\it Quadratic algebras}, 
{\it Univ. Lect. Ser.} {\bf37}, Amer. Math. Soc., 2005.

\bibitem{R}
J. Rickard, 
Morita theory for derived categories, 
{\it J. London Math. Soc. (2)} {\bf39} (1989), 436--456.

\bibitem{SV}
S. P. Smith and M. Van den Bergh, 
Noncommutative quadric surfaces, 
{\it J. Noncommut. Geom.} {\bf 7} (2013), 817--856.

\bibitem{U}
K. Ueyama,
Derived categories of skew quadric hypersurfaces, to appear in Isr. J. Math., (arXiv:2008.02255). 

\end{thebibliography}
\end{document}